\documentclass[11pt]{amsart}
\usepackage[english]{babel}
\usepackage{amsmath,amsthm,amssymb,fourier,mathrsfs,enumerate,color,ifpdf}
\usepackage{tikz,tikz-cd}
\usetikzlibrary{decorations.pathmorphing,arrows,intersections}
\usepackage{enumitem}
\usepackage{float}
\usepackage{csquotes}
\usepackage[
  backend=biber,
  style=alphabetic,
  sorting=nyt,
  maxcitenames=4,
  maxalphanames=4,
  minalphanames=4
]{biblatex}
\usepackage[colorlinks=true,
            linkcolor=red,
            citecolor=blue,
            urlcolor=blue]{hyperref}
\usepackage{listings}
\usepackage{xcolor}

\lstdefinelanguage{Sage}{
  language=Python,
  morekeywords={var,plot,matrix,Graph,graphs},
  sensitive=true
}

\DeclareLabelalphaTemplate{
  \labelelement{
    \field[strwidth=1,strside=left,uppercase]{labelname}
  }
  \labelelement{
    \field[strwidth=2,strside=right]{year}
  }
}

\DeclareCiteCommand{\cite}[\mkbibbrackets]
  {\usebibmacro{prenote}}
  {%
    \usebibmacro{citeindex}%
    \bibhyperref{%
      \printfield{labelalpha}%
      \printfield{extraalpha}%
    }%
  }
  {\addcomma\space}
  {\usebibmacro{postnote}}
\defbibenvironment{bibliography}
  {\list
     {\mkbibbrackets{\printfield{labelalpha}%
        \printfield{extraalpha}}} % left label
     {\setlength{\labelwidth}{4.5em}%
      \setlength{\leftmargin}{\labelwidth}%
      \addtolength{\leftmargin}{\labelsep}%
      \setlength{\itemsep}{\bibitemsep}%
      \setlength{\parsep}{\bibparsep}}%
  }
  {\endlist}
  {\item}

\DeclareBibliographyDriver{article}{%
  \printnames{author}%
  \newunit\newblock
  \printfield{title}%
  \newunit\newblock
  \printfield{journaltitle}%
  \setunit*{\addspace}%
  \printfield{volume}%
  \setunit{\addcolon\space}%
  \printfield{number}%
  \setunit{\addcomma\space}%
  \printfield{year}%
  \newunit\newblock
  \printfield{pages}%
  \newunit\newblock
  \printfield{doi}%
  \finentry
}

\DeclareBibliographyDriver{incollection}{%
  \printnames{author}%
  \newunit\newblock
  \printfield{title}% chapter title
  \newunit\newblock
  \printtext[emph]{\printfield{booktitle}}% book title italic
  \printtext[emph]{\printfield{note}}
  \newunit\newblock
  \printnames{editor}%
  \newunit\newblock
  \printlist{publisher}%
  \setunit{\addspace}%
  \printfield{year}%
  \newunit\newblock
  \printfield{pages}%
  \newunit\newblock
  \printfield{doi}%
  \finentry
}

\usepackage{caption}
\usepackage{subcaption}
\usepackage{comment}

\newcommand{\GL}{\operatorname{GL}}

\newcommand{\SL}{\operatorname{SL}}

\newcommand{\setdef}[2]{ \left\{ {#1}\ : \ {#2} \right\} }

\newtheorem{theorem}{Theorem}[section]
\newtheorem{thm}[theorem]{Theorem}
\newtheorem{mthm}{Theorem}

\newtheorem{lemma}[theorem]{Lemma}

\newtheorem{corollary}[theorem]{Corollary}

\newtheorem{proposition}[theorem]{Proposition}

\newtheorem{conjecture}[theorem]{Conjecture}

\newtheorem{defn}[theorem]{Definition}

\newtheorem{example}[theorem]{Example}

\newtheorem{remark}[theorem]{Remark}

\numberwithin{equation}{section}

\begin{document}
\title{Groups Generated by Root Unipotents: Higher-rank and rank-one}

\author{Yanlong Hao}
\address{University of Michigan, Ann Arbor, Michigan, USA}
\email{ylhao@umich.edu}
\keywords{Arithmetic groups; S-arithmetic groups; unipotent generation; congruence subgroups; rank-one Lie groups; arithmeticity; binary quadratic forms; Pell equations.}
 
\subjclass[2020]{20G30 (primary), 22E40 (primary); 11E16; 20H05}
\date{\today}

\begin{abstract}
We study subgroups generated by prescribed unipotent elements. For
$n\geq 3$, let
\[
\Gamma(Q)=\langle E_{ij}(q_{ij}):i\neq j\rangle
\]
be the subgroup of $\SL(n,\mathbb R)$ generated by elementary
matrices with nonzero rational parameters $q_{ij}$. We prove that
$\Gamma(Q)$ is always $S$-arithmetic, extending classical
integral-parameter results to arbitrary rational parameters. Our
method is effective: it determines the relevant ring of
$S$-integers, a diagonal conjugating matrix, and an explicit
description of the resulting subgroup by congruence conditions.

We then study the rank-one family
\[
\Gamma_q=
\left\langle
\begin{pmatrix}
1&1\\
0&1
\end{pmatrix},
\begin{pmatrix}
1&0\\
q&1
\end{pmatrix}
\right\rangle,
\qquad q=\tfrac{s}{t}\in\mathbb Q.
\]
For $q\neq0,\pm3$, we prove that
\[
\Gamma_q=\Gamma_1^{(t)}(s)
\]
if and only if its upper-triangular subgroup strictly contains
\[\left\langle\begin{pmatrix}1&1\\0&1\end{pmatrix}\right\rangle.\]
Thus the congruence-subgroup problem is reduced to constructing a
single upper-triangular element outside this cyclic subgroup.

As applications, we reinterpret several constructions from the
study of non-freeness as constructions of arithmetic groups. We
verify the criterion for all rational parameters
$q=\tfrac{s}{t}\in(-4,4)$ with $1\leq |s|\leq21$, and obtain new infinite
families of congruence subgroups from indefinite binary quadratic
forms and Pell-type equations.
\end{abstract}

\maketitle
\section{Introduction}
The study of subgroups generated by unipotent elements has a long history and lies at the intersection of the theory of arithmetic groups, algebraic groups, and $K$-theory. The case of integral parameters is well understood. A theorem of Tits \cite{MR424966} shows that, for $n\ge 3$, elementary matrices with arbitrary nonzero integral entries generate a subgroup of finite index in $\SL(n,\mathbb Z)$. This result was subsequently generalized to a broad class of higher-rank algebraic groups by Vaserstein \cite{MR349864}, Raghunathan \cite{MR1141802}, Venkataramana \cite{MR1306038}, and others.

The rank-one situation is markedly different. Even for integral parameters, groups generated by opposite unipotents may be thin rather than arithmetic. Thus the distinction between higher-rank rigidity and rank-one flexibility is already visible in the classical setting.

Motivated by this circle of ideas, we consider subgroups generated by prescribed elementary matrices in $\SL(n,\mathbb{R})$. For $i\neq j$, let
\[
E_{ij}(t)=I_n+te_{ij}.
\]
Let $Q=(q_{ij})$ be a collection of nonzero rational numbers indexed by the ordered pairs $(i,j)$ with $i\neq j$. For convenience,
we identify $Q$ with the corresponding off-diagonal matrix. Define
\[
\Gamma(Q)=\langle E_{ij}(q_{ij}): i\neq j\rangle.
\]

At first sight, the structure of $\Gamma(Q)$ appears to depend delicately on the choice of parameters $q_{ij}$. Nevertheless, in Higher-rank the situation is unexpectedly rigid. Although $\Gamma(Q)$ is generally dense in $\SL(n,\mathbb R)$, it embeds diagonally as a lattice in a suitable product of real and $p$-adic groups. Consequently, the natural framework is that of $S$-arithmetic groups rather than ordinary arithmetic groups.

Our first result shows that this higher-rank rigidity persists for arbitrary rational parameters.

\begin{mthm}\label{thm: Higher-rank}
For every collection $Q$ of nonzero rational numbers and
every $n\ge3$, there exist an explicitly computable integer $N$ and an explicitly computable diagonal
matrix $g\in\GL(n,\mathbb Q)$ such that
$g\Gamma(Q)g^{-1}$ is a finite-index subgroup of
\[ 
\SL\left(n,\mathbb{Z}\left[\tfrac{1}{N}\right]\right).
\]
In particular, $\Gamma(Q)$ is $S$-arithmetic.
\end{mthm}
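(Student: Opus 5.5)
Conjugating by a diagonal matrix $g=\operatorname{diag}(d_1,\dots,d_n)$ rescales each generator: $gE_{ij}(q)g^{-1}=E_{ij}(d_id_j^{-1}q)$. So the plan is to normalize the parameters as far as possible by such a conjugation, to control the denominators with a ring $\mathbb Z[1/N]$, and then to invoke the $S$-arithmetic version of Tits' theorem. In the order I would carry them out, the steps are the following.

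\textbf{Step 1 (normalization).} Take $N$ to be the product of all primes dividing a numerator or denominator of some $q_{ij}$. Every $q_{ij}$ is then a unit of $R=\mathbb Z[1/N]$, so $\Gamma(Q)\subset \SL(n,R)$ already. However, we want the image to have finite index, which needs more. I would choose the $d_i$ along a spanning tree, say the edges $(1,j)$. Setting $d_1=1$ and $d_j=q_{1j}$ makes the conjugated parameter $q'_{1j}$ equal to $1$ for all $j$. Conjugating by $g$ does not change $N$, so $\Gamma'=g\Gamma(Q)g^{-1}$ contains $E_{1j}(1)$ and lies in $\SL(n,R)$. All other parameters $q'_{ij}$ remain nonzero elements of $R$; in fact they are units.

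\textbf{Step 2 (producing elementary subgroups).} The goal is to show that for each $i\ne j$, $\Gamma'$ contains $E_{ij}(\mathfrak a)$ for some nonzero ideal $\mathfrak a=mR$ of $R$. I would use the commutator relation $[E_{ik}(a),E_{kj}(b)]=E_{ij}(ab)$ for distinct $i,j,k$. This produces $E_{ij}(x)$ for all $x$ in the additive group $\Lambda_{ij}\subset R$ generated by the available parameters. The main obstacle is that $\Lambda_{ij}$ starts as only a finitely generated subgroup such as $q\mathbb Z$, not an $R$-ideal; one must show it becomes stable under multiplication by $1/p$ for $p\mid N$. My plan is to use conjugation by elements of the subgroup generated by $E_{ij}$ and $E_{ji}$, which is a rank-one group $\langle E_{12}(a),E_{21}(b)\rangle$ with $a,b$ units of $R$. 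Alternatively, I would exploit the diagonal elements it contains. For example, the element $w(a)=E_{ij}(a)E_{ji}(-a^{-1})E_{ij}(a)$ lies in $\Gamma'$ whenever $E_{ij}(a)$ and $E_{ji}(-a^{-1})$ do, and $h=w(a)w(-1)$ is a diagonal matrix with entries $a,a^{-1}$. Conjugating $E_{ik}(x)$ by $h$ multiplies $x$ by $a^{\pm1}$, or $a^{\pm2}$ for the other index pairs. Since the $q'$ are units whose prime factorizations involve all primes of $N$ (by the choice of $N$), iterating gives closure under multiplication by a finite-index subgroup of $R^\times$. Concretely, I would first get $E_{ij}(\mathbb Z)$ for all $i,j$ from suitable commutators with the $E_{1j}(1)$ and products of parameters. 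If some prime of $N$ appears only in one position, I would enlarge $g$ or rechoose the tree so that each prime occurs in some $q'_{ij}$ together with its inverse pair. Then the subgroup $\mathbb Z[a^{\pm2}]\cdot m$ contained in $\Lambda_{ij}$ is a nonzero ideal of $R$.

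\textbf{Step 3 (finite index).} Once $\Gamma'$ contains $E_{ij}(mR)$ for all $i\ne j$, it contains the elementary group $E(n,R;mR)$ of level $mR$. For $n\ge3$ and $R=\mathbb Z[1/N]$, the results of Bass--Milnor--Serre, and Vaserstein and Tits cited in the introduction, show that this group has finite index in $\SL(n,R)$: it contains a principal congruence subgroup $\SL(n,R;m^2R)$, and the congruence quotient is finite. Since $\Gamma'\subset \SL(n,R)$, it has finite index. $S$-arithmeticity follows, and $N$, $g$ and $m$ are all explicit from the construction. The delicate point remains Step~2, namely upgrading the $\mathbb Z$-spans to $R$-ideals using diagonal elements built from the rank-one subgroups.
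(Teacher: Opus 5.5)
There is a genuine gap, and it starts in Step~1. If $N$ is the product of all primes occurring in the $q_{ij}$, the statement you aim at in Steps~2--3 is false.

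Take $q_{ij}=2$ for all $i\neq j$. Your recipe gives $N=2$. But $\Gamma(Q)\le\SL(n,\mathbb Z)$, so $\Gamma(Q)$ and every conjugate $g\Gamma(Q)g^{-1}$ are bounded in $\SL(n,\mathbb Q_2)$. On the other hand, every finite-index subgroup of $\SL(n,\mathbb Z[\tfrac12])$ contains a power of $\operatorname{diag}(2,\tfrac12,1,\dots,1)$, so it is unbounded there.

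This is exactly where Step~2 breaks:
\begin{itemize}
\item Every element of $\Gamma(Q)$ is congruent to $I$ modulo $2$, and its only diagonal elements are sign matrices.
\item $E_{ij}(a)$ and $E_{ji}(-a^{-1})$ are never both in $\Gamma'$, because the product $x_{ij}x_{ji}$ of the two parameters is unchanged by diagonal conjugation.
\item Your claim that one first gets $E_{ij}(\mathbb Z)$ for all $i,j$ also fails.
\item No rechoice of the tree or enlargement of $g$ can help. Diagonal conjugation preserves every cycle sum $\sum_k v_p(q_{i_ki_{k+1}})$, and boundedness at $p$ is conjugation-invariant.
\end{itemize}
The Weyl-element mechanism fails even when the set of primes is right. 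For $q_{ij}=\tfrac23$ the correct ring is $\mathbb Z[\tfrac13]$. Yet all off-diagonal entries of $\Gamma(Q)$ lie in $2\mathbb Z[\tfrac13]$, so by the same invariance $1=a\cdot a^{-1}$ would have to lie in $4\mathbb Z[\tfrac13]$.

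What is missing is the correct identification of $N$ and $g$, and a real argument for finite index.

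\emph{The right $N$.} The primes to invert are exactly those at which $\Gamma(Q)$ is unbounded in $\SL(n,\mathbb Q_p)$. The paper finds them as the primes $p$ for which the complete digraph with weights $v_p(q_{ij})$ has a negative cycle. After the group-preserving moves $q_{ij}\mathbb Z\mapsto q_{ij}\mathbb Z+q_{ik}q_{kj}\mathbb Z$ make the valuation matrix admissible, this reduces to $v_p(q_{ij})+v_p(q_{ji})<0$ for some $i\neq j$.

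\emph{The right $g$.} The conjugator comes from an integral solution of the difference constraints $v_p(d_id_j^{-1}q_{ij})\ge 0$ at each prime without a negative cycle (Bellman--Ford), not from a spanning tree. For example, take $n=3$, $q_{12}=\tfrac12$ and all other $q_{ij}=4$. There is no negative cycle at $2$, so the correct $N$ is $1$, and $g=\operatorname{diag}(2,1,1)$ makes every parameter integral. Your $g=\operatorname{diag}(1,\tfrac12,4)$ instead produces $q'_{23}=\tfrac12$.

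\emph{Finite index.} The paper never upgrades $\mathbb Z$-spans to ideals. Tits' theorem puts a finite-index subgroup of $\SL(n,\mathbb Z)$ inside $\Gamma(Q)$. Venkataramana's lemma then applies: a group containing an arithmetic lattice and bounded at almost all places is $S$-arithmetic, with $S$ its set of unbounded places, here $\{\infty\}\cup\{p\mid N\}$. Since $g\Gamma(Q)g^{-1}\le\SL(n,\mathbb Z[\tfrac1N])$, commensurability forces finite index. Your direct route would need both corrections above, plus a genuine replacement for the Weyl-element trick, before Step~3 could be invoked.
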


Moreover, our proof yields a complete and effective description of the subgroup $g\Gamma(Q)g^{-1}$ by explicit congruence conditions. This classification is stated in Theorems~\ref{thm: high rank number fields} and~\ref{thm: classification}.
 
 While the arithmeticity statement extends to a much broader class of higher-rank groups, the effective congruence classification established here relies heavily on the structure of $\SL(n)$.

The proof of Theorem~\ref{thm: Higher-rank} applies existing rigidity results. By Tits's theorem, $\Gamma(Q)$ contains an arithmetic lattice in $\SL(n,\mathbb R)$. We then use an idea of Venkataramana \cite[Proposition~2.3]{MR4521495}, generalized in \cites{MR4812042,MR5043742}, to deduce that $\Gamma(Q)$ is $S$-arithmetic. The congruence subgroup property then reduces the classification of $\Gamma(Q)$ to a problem in finite groups studied in \cites{MR669558,MR687835}. The method also applies to other types
of higher-rank groups; see the discussion in
Section~\ref{Sec: general case}.

Theorem~\ref{thm: Higher-rank} provides a complete answer in higher-rank. Having settled the higher-rank situation, we now turn to $\SL(2,\mathbb{R})$. This case, however, exhibits a fundamentally different behaviour. In contrast to the higher-rank case, subgroups generated by opposite unipotents in $\SL(2,\mathbb R)$ need not be arithmetic and may even be free. Understanding when arithmeticity occurs therefore becomes a subtle problem.

This problem has a long history, even in the more general setting of $\SL(2,\mathbb{C})$. For $q\in \mathbb{C}$, let $\Gamma_q$ be the subgroup of $\SL(2,\mathbb{C})$ generated by the matrices
\[A=\begin{pmatrix}
    1&1\\
    0&1
\end{pmatrix},\quad B_q=\begin{pmatrix}
   1&0\\
   q&1
\end{pmatrix}.\]
The groups $\Gamma_q$ have been studied extensively from the perspective of freeness. Brenner \cite{MR75952} proved that $\Gamma_q$ is free for $q\in \mathbb{R}\setminus(-4,4)$, and it is also known that if $q$ lies in the \textbf{Riley slice}, then $\Gamma_q$ is free \cite{MR1272421}. Moreover, $\Gamma_q$ is free whenever $q$ is transcendental \cite{MR94388}.

We consider the case $q=\tfrac{s}{t}\in\mathbb{Q}$ with $|q|<4$. This freeness problem was first studied by Lyndon and Ullman \cite{MR258975}, and has since attracted considerable attention; see, for example, \cites{MR1245077,MR372042,MR1794285,MR4322999}. In \cite{MR1370894}, it was shown that $\Gamma_q$ is non-free when $1\leq |s|\leq 16$ and $|q|<4$. Kim and Koberda \cite{MR4505367} later extended this result to $1\leq|s|\leq 27$, excluding the case $|s|=24$. Although these works were not concerned with arithmeticity, they provide a key source of examples for the present paper. Indeed, the relations used to certify non-freeness often produce nontrivial upper-triangular elements, which are precisely the objects required by our arithmeticity criterion; see Theorem~\ref{Thm: characteristic S-arithmetic}.

Beyond the freeness problem, there has also been considerable interest in the arithmeticity of $\Gamma_q$. In \cite[Theorem~3.10]{MR5043742}, it was shown that $\Gamma_q$ is $S$-arithmetic under the Greenberg--Shalom Commensurator Hypothesis. This has been verified unconditionally in several families \cite{MR4556432,MR5004046}. The present paper gives a new criterion for arithmeticity, which in turn yields several new infinite families of congruence subgroups.

Fix $q=\tfrac{s}{t}\in\mathbb{Q}\setminus\{0\}$, where $\gcd(s,t)=1$ and $|q|<4$. For two numbers $a,b\in \mathbb{Z}\left[\tfrac{1}{t}\right]$, we write
\[a\equiv_t b\pmod s\]
if $a-b\in s\mathbb{Z}\left[\tfrac{1}{t}\right]$. 

Let the \textbf{congruence subgroup associated to $q=\tfrac{s}{t}$} be
\[\Gamma_1^{(t)}(s)=\left\{\begin{pmatrix}
    a&b\\
    c&d
\end{pmatrix}\in\SL\left(2,\mathbb{Z}\left[\tfrac{1}{t}\right]\right): a\equiv_td\equiv_t1\pmod s,  \quad c\equiv_t0 \pmod s\right\}.\]
When $t=1$, we simply denote $\Gamma_1^{(1)}(s)$ by $\Gamma_1(s)$, in accordance with the usual notation for congruence subgroups.

Following \cites{MR5043742, MR5004046}, we formulate the following conjecture.
\begin{conjecture}\label{Con: arithmeticty}
    For every nonzero $q=\tfrac{s}{t}\in \mathbb{Q}\cap (-4,4)$, the group $\Gamma_q$ is equal to the congruence subgroup $\Gamma_1^{(t)}(s)$. 
\end{conjecture}

We provide a useful criterion for the conjecture and verify it in several cases.

Let $H=\langle A\rangle$ and let $U$ be the group of upper triangular matrices
\[
U=\left\{\begin{pmatrix}
    *&*\\
    0&*
\end{pmatrix}\in \SL(2,\mathbb{R})\right\}.
\] 
By symmetry, all statements below admit an equivalent formulation with $H$ and $U$ replaced by $\langle B_q\rangle$ and the group of lower triangular matrices; we work with $H$ for convenience. Furthermore, set $U_q=U\cap \Gamma_q$.
\begin{mthm}\label{Thm: characteristic S-arithmetic}
    For $q\neq 0,\pm3$, we have $\Gamma_q=\Gamma_1^{(t)}(s)$ if and only if $U_q\neq H$. In particular, $\Gamma_q$ is finitely presented in this case.
    
\end{mthm}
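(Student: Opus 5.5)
The ``only if'' direction is a direct check, and I would do it first. If $\Gamma_q=\Gamma_1^{(t)}(s)$, I exhibit an upper-triangular element of $\Gamma_1^{(t)}(s)$ outside $H$. When $t>1$, the element $\begin{pmatrix}1&1/t\\0&1\end{pmatrix}$ works. When $t=1$, the hypotheses $|q|<4$ and $q\neq0,\pm3$ force $s\in\{\pm1,\pm2\}$, and then $-I\in\Gamma_1(s)$ because $-1\equiv1\pmod s$. This also explains the exclusion of $q=\pm3$: there $-1\not\equiv1\pmod 3$, so $U\cap\Gamma_1(3)=H$ and the criterion cannot detect equality.

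For the ``if'' direction I would first reduce everything to a containment of a principal congruence subgroup. Note that $\Gamma_q\subseteq\Gamma_1^{(t)}(s)$ always, since $A$ and $B_q$ both satisfy the defining congruences. Let $\Gamma^{(t)}(s)$ be the kernel of reduction $\SL(2,\mathbb Z[\tfrac1t])\to\SL(2,\mathbb Z[\tfrac1t]/s)$. Since $s$ is coprime to $t$, the image of $\Gamma_1^{(t)}(s)$ under this reduction is the cyclic group of unipotents $\begin{pmatrix}1&*\\0&1\end{pmatrix}$, which is generated by the image of $A$. Hence it suffices to prove $\Gamma^{(t)}(s)\subseteq\Gamma_q$. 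Finite presentation then follows because $\Gamma_1^{(t)}(s)$ is an $S$-arithmetic group, and those are finitely presented.

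To produce enough elements, take $u=\pm\begin{pmatrix}\lambda&x\\0&\lambda^{-1}\end{pmatrix}\in U_q\setminus H$ and split into cases.
\begin{enumerate}
\item If $\lambda\neq\pm1$, conjugating $A$ by powers of $u$ gives the unipotents $E_{12}(\lambda^{2k})$ for all $k$. Then $\Gamma_q\cap U$ contains $E_{12}(\mathfrak a)$ for a nonzero ideal $\mathfrak a$ of $\mathbb Z[\tfrac1N]$, where $N>1$ is built from the numerator and denominator of $\lambda^2$.
\item If $\lambda=\pm1$ and $x\notin\mathbb Z$, then $\Gamma_q$ contains $E_{12}(1/m)$ for some $m>1$, with $m\mid t^k$. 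Conjugating $B_q$ by such elements and forming commutators should produce nontrivial diagonal elements, which returns us to the first case.
\item If $u=-E_{12}(x)$ with $x\in\mathbb Z$, then $-I\in\Gamma_q$. I expect this case to need a separate small argument, using $s\in\{\pm1,\pm2\}$ as above.
\end{enumerate}
Once $\Gamma_q$ contains $E_{12}(\mathfrak a)$ and, by the symmetric argument or by conjugation, $E_{21}(\mathfrak b)$ for nonzero ideals of $\mathbb Z[\tfrac1N]$ with $N>1$, I would invoke Serre's congruence subgroup property for $\SL_2$ over $S$-integers with $|S|\geq2$. In the form of Vaserstein's theorem, this says that the group generated by these elementary matrices contains a principal congruence subgroup $\Gamma^{(N)}(M)$.

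Finally, I would upgrade $\Gamma^{(N)}(M)\subseteq\Gamma_q$ to $\Gamma^{(t)}(s)\subseteq\Gamma_q$. First, the primes of $N$ not dividing $t$ must be removed. The tool here is that $\Gamma_q\subseteq\SL(2,\mathbb Z[\tfrac1t])$, so $\Gamma_q$ contains $\Gamma^{(N)}(M)\cap\SL(2,\mathbb Z[\tfrac1t])$, which is a congruence subgroup of $\SL(2,\mathbb Z[\tfrac1t])$. Next, the level must be lowered from $M$ to $s$. For this I would compute the image of $\Gamma_q$ in $\SL(2,\mathbb Z[\tfrac1t]/M')$ and show it equals the image of $\Gamma_1^{(t)}(s)$, using strong approximation and the finite-group generation results cited in the introduction. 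The main obstacle I expect is the $p$-primary part of this level-lowering at primes $p\mid s$ with small $p$, together with the degenerate case $\lambda=\pm1$. In particular, one must show that the images of $A$ and $B_q$ generate the full preimage modulo $p^k$ of the cyclic unipotent group. I would first try an induction on $k$, using commutators of $A$ and $B_q$ to generate the successive Lie-algebra layers.
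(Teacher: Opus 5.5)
Your ``only if'' direction and your reduction of the ``if'' direction to $\Gamma^{(t)}(s)\subseteq\Gamma_q$ are correct. Case~(1) does give $E_{12}(\mathbb Z[\tfrac1N])\subseteq\Gamma_q$. The first genuine gap is Case~(2), which is exactly where the rank-one difficulty lives. Put $W=\operatorname{diag}(m^{-1/2},m^{1/2})$. Then $W^{-1}E_{12}(1/m)W=A$ and $W^{-1}B_qW=B_{q/m}$. So words in $E_{12}(1/m)$ and $B_q$ are conjugates of words in $A$ and $B_{q/m}$. Extracting a non-unipotent triangular element from ``conjugates and commutators'' is therefore the original problem for $q/m$, not a reduction of it. The missing idea is the self-similarity that this conjugation gives. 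Since $\Gamma_q\subseteq\Gamma_{q/m}=W^{-1}\Gamma_qW$, you get $W\Gamma_qW^{-1}\subseteq\Gamma_q$, and hence $E_{12}(1/m^{k})\in\Gamma_q$ for all $k\ge1$. Only then can you choose $k$ with $m^k=1+s\mu$ (possible since $\gcd(m,s)=1$) and form $V=E_{12}(\mu)\,B_q^{t}\,E_{12}(-\mu/m^k)=\bigl(\begin{smallmatrix}m^k&0\\ s&m^{-k}\end{smallmatrix}\bigr)$. Conjugating $B_q^t=E_{21}(s)$ by powers of $V$ then gives the lower lattice $E_{21}(s\mathbb Z[\tfrac1m])$. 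You need the same construction in Case~(1) as well. Conjugating $B_q^t$ by your upper-triangular $u$ does not stay lower triangular, so ``by the symmetric argument or by conjugation'' does not yet produce $E_{21}(\mathfrak b)$.

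The second gap is the passage from $\SL(2,\mathbb Z[\tfrac1N])$ to $\SL(2,\mathbb Z[\tfrac1t])$. Your step ``remove the primes of $N$ not dividing $t$'' addresses a non-issue. Since $\lambda\in\mathbb Z[\tfrac1t]^{\times}$ and $m\mid t^k$, every prime of $N$ already divides $t$. The real problem is the opposite case, $\Pi(N)\subsetneq\Pi(t)$, which nothing in the hypothesis $U_q\neq H$ rules out. In that case $\Gamma^{(N)}(M)\cap\SL(2,\mathbb Z[\tfrac1t])=\Gamma^{(N)}(M)$ has infinite index in $\SL(2,\mathbb Z[\tfrac1t])$. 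It is therefore not a congruence subgroup there, and you cannot yet invoke the congruence subgroup property of $\SL(2,\mathbb Z[\tfrac1t])$. The paper fills this with a Venkataramana-type lemma, essentially Lemma~\ref{lem: small lattice to big one} with $\Lambda=\Gamma_q\cap\SL(2,\mathbb Z)$. Because $\Lambda$ has finite index in $\SL(2,\mathbb Z)$, $\Gamma_q$ is $T$-arithmetic for the set $T$ of places where it is unbounded. Moreover, $A^{-1}B_q$ has trace $2-q$, which has negative $p$-adic valuation for every $p\mid t$, so $T=\{\infty\}\cup\Pi(t)$.

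Once this is in place, the level-lowering you expected to be the main obstacle is elementary. For $p\nmid s$, $A$ and $B_q^{tw}\equiv E_{21}(1)\pmod{p^m}$ generate $\SL(2,\mathbb Z/p^m\mathbb Z)$, so by the Chinese remainder theorem $p$ can be dropped from the level. Any $C\equiv I\pmod s$ is then brought to $I\pmod{s^\lambda}$ by multiplying by $A^{\lambda_1}$ and $B_q^{t\mu}=E_{21}(s\mu)$. Finally, your Case~(3) forces $|s|\le2$. For $t>1$ one then has $t=ks+1$, and $B_q^{-1}A^kB_q^{t}=\bigl(\begin{smallmatrix}t&k\\0&t^{-1}\end{smallmatrix}\bigr)$ puts you back in Case~(1). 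For $t=1$ this is the classical integral case.
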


The conclusion fails only for the exceptional parameters $q=0,\pm3$. For $q=\pm3$, the group $\Gamma_q$ is a congruence subgroup but $U_q=H$, while for $q=0$ one has $\Gamma_0=H$. 

\begin{remark}
Since \(U=\operatorname{Stab}_{\SL(2,\mathbb R)}(\infty)\), one has
\[
U_q=\operatorname{Stab}_{\Gamma_q}(\infty).
\]
Hence Theorem~\ref{Thm: characteristic S-arithmetic}
may be interpreted as saying that, for \(q\neq0,\pm3\),
the group \(\Gamma_q\) is arithmetic if and only if
\(\operatorname{Stab}_{\Gamma_q}(\infty)\) is strictly larger than
\(H=\langle A\rangle\).
\end{remark}

The significance of Theorem~\ref{Thm: characteristic S-arithmetic} is that it reduces the problem of proving $\Gamma_q=\Gamma_1^{(t)}(s)$ to the considerably simpler task of producing an element of $U_q\setminus H$. At first glance, it looks difficult to find such an element. However, it turns out that many constructions used to prove that $q$ is not free naturally produce elements of $U_q\setminus H$. We illustrate this phenomenon in the following results.

We first apply Theorem~\ref{Thm: characteristic S-arithmetic} to two distinct Diophantine constructions. Although the resulting families arise from different quadratic equations, both ultimately produce the upper-triangular elements required by the criterion.

It has long been known that the non-freeness of $\Gamma_q$ is closely related to general Pell equations \cites{MR1245077,MR1420342,MR4322999}. Theorem~\ref{Thm: characteristic S-arithmetic} provides a new connection between this Diophantine perspective and arithmeticity.
Applying the criterion to suitable quadratic Diophantine equations, we construct two infinite families of parameters $q$ for which $\Gamma_q$ is arithmetic. The first family arises from integral points on indefinite binary quadratic forms, while the second arises from representations of squares by indefinite quadratic forms. We also show that the families constructed in \cites{MR1245077,MR1420342} yield congruence subgroups, whereas the corresponding question remains open for the family of \cite{MR4322999}.

Define 
\[
\mathcal{CS}=\setdef{q=\tfrac{s}{t}\in \mathbb{Q}\cap (-4,4)}{\Gamma_q=\Gamma_1^{(t)}(s)}.
\]

We note here that although $\mathcal{CS}$ is defined using subgroups of $\SL(2,\mathbb{R})$, it is relevant to arithmeticity questions in other rank-one Lie groups. Indeed, the existence of parameters $q\in\mathcal{CS}$ often provides a sufficient condition for the arithmeticity of groups generated by opposite unipotents in more general settings. We discuss this connection in Section~\ref{sec:rank-one-general}.

\begin{mthm}\label{thm: quadratic form}
    Let $A,B,C$ be three nonzero integers such that
    \begin{enumerate}
        \item $C\mid AB$
        \item $(A+B+C)^2-4AB>0$ and is not a perfect square.
    \end{enumerate}
    Let $(s,t)$ be an integral solution with $st\neq0$ of the equation, 
    \[
    AB s^2+(A+B+C)st+t^2=\pm 1.
    \]
    Then $q=\tfrac{s}{t}\in \mathcal{CS}$.
\end{mthm}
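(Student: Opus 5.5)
The plan is to deduce the statement from Theorem~\ref{Thm: characteristic S-arithmetic}. Given a solution $(s,t)$ with $st\neq0$ of
\[
ABs^2+(A+B+C)st+t^2=\pm1,
\]
I will write down an explicit word in $A$ and $B_q$ whose lower-left entry vanishes. This gives an element of $U_q$, and I will then check that this element is not a power of $A$. Before that there are two routine checks.

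First, $\gcd(s,t)=1$ holds automatically, since any common divisor of $s$ and $t$ divides $\pm1$.

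Second, $q=s/t\notin\{0,\pm3\}$ and $|q|<4$. Dividing the equation by $t^2$ gives $(1+Aq)(1+Bq)+Cq=\pm t^{-2}$. Set $D=(A+B+C)^2-4AB$. Since $D$ is not a perfect square, the form $ABx^2+(A+B+C)x+1$ has no rational root. From this I will extract the bound $|q|<4$ and exclude $q=\pm3$, possibly after also using the divisibility $C\mid AB$. This step is not the heart of the proof, but I would check it carefully. If it fails for a few small exceptional solutions, I would treat those separately by a direct computation.

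The core step is the construction. For a word $B_q^{b}A^{c}B_q^{d}$ the lower-left entry is
\[
q\,(b+d+bcdq),
\]
so such a word is upper-triangular exactly when $t(b+d)+bcd\,s=0$. Its diagonal entries are then $1+cdq$ and $1+bcq$. The target equation rewrites as $(t+As)(t+Bs)+Cst=\pm1$, which suggests using a longer word,
\[
W=B_q^{x_1}A^{y_1}B_q^{x_2}A^{y_2}B_q^{x_3}.
\]
The exponents are to be chosen from $A$, $B$, $C$ and $AB/C$, possibly multiplied by $s$ or $t$. The hypothesis $C\mid AB$ is exactly what makes $AB/C$ an integer exponent. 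The aim is for the lower-left entry of $W$ to factor as $q\cdot(\text{an integer})\cdot\bigl(ABs^2+(A+B+C)st+t^2\mp1\bigr)$ up to a unit of $\mathbb Z[\tfrac1t]$, so that it vanishes precisely on solutions of the equation. Concretely, I would expand the lower-left entry of $W$ as a polynomial in $q$ with symbolic exponents and match coefficients with $t^2\cdot\bigl((1+Aq)(1+Bq)+Cq\bigr)-(\pm1)$. Finding the right exponents is the main obstacle. If five letters do not suffice, I would conjugate the three-letter solution by a power of $A$, or allow seven letters.

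Finally, I will verify that $W\notin H$. Its diagonal entries are explicit rational expressions such as $1+y\,x\,q$. I expect to show that one of them is $\neq\pm1$, or, if the diagonal is $\pm1$, that the upper-right entry is not an integer because of a genuine power of $t$ in its denominator (this is where $st\neq0$ enters). Either condition places $W$ in $U_q\setminus H$, and Theorem~\ref{Thm: characteristic S-arithmetic} then gives $\Gamma_q=\Gamma_1^{(t)}(s)$, that is, $q\in\mathcal{CS}$.
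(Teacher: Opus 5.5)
Your proposal has a genuine gap at its center. The word $W$ is never written down, and $W\notin H$ is only expected, so nothing in the outline guarantees that suitable exponents exist. You say yourself that this is the main obstacle.

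The missing idea is to read the equation off a \emph{diagonal} entry of the five-letter word, not its lower-left entry. For $W=B_q^{m_1}A^{m_2}B_q^{m_3}A^{m_4}B_q^{m_5}$ the lower-right entry is $1+(m_1m_2+m_1m_4+m_3m_4)q+m_1m_2m_3m_4q^2$, independent of $m_5$. Put $d=\gcd(A,C)$ and $(m_1,m_2,m_3,m_4)=(d,\,A/d,\,Bd/C,\,C/d)$, so that $m_1m_2=A$, $m_3m_4=B$ and $m_1m_4=C$. Integrality of $Bd/C$ is exactly where $C\mid AB$ enters: it is equivalent to writing $C=m_1m_4$ with $m_1\mid A$ and $m_4\mid B$.

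Write the equation as $N:=ABs^2+(A+B+C)st+t^2=\varepsilon\in\{\pm1\}$. Then the lower-right entry is $N/t^2=\varepsilon t^{-2}$. The lower-left entry is $q$ times an integral quadratic polynomial in $q$, hence equals $sk/t^3$ for some $k\in\mathbb Z$. Right multiplication by $B_q^{-\varepsilon k}$, that is, replacing $m_5$ by $m_5-\varepsilon k$, kills it, precisely because the lower-right entry is $\varepsilon t^{-2}$. The resulting element of $U_q$ has diagonal entry $\varepsilon t^{-2}\neq\pm1$ when $|t|>1$. So it lies in $U_q\setminus H$, and Theorem~\ref{Thm: characteristic S-arithmetic} applies. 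This also realizes the factorization you were aiming for, with an exponent depending on $s,t$: for $m_5=-\varepsilon tK$, where $K=t(m_1+m_3)+m_1m_2m_3s$, the lower-left entry equals $-\varepsilon t^{-1}qK(N-\varepsilon)$.

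Two of your preliminary claims also need correcting.

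First, you cannot exclude $q=\pm3$: for $A=B=C=1$ the pair $(s,t)=(-3,1)$ is a solution. The case $|t|=1$ must be handled as the integral case, not through Theorem~\ref{Thm: characteristic S-arithmetic}, which excludes $\pm3$. There $q\in\{\pm1,\pm2,\pm3\}$, since $s\bigl(ABs+(A+B+C)t\bigr)\in\{0,-2\}$ together with $|C|\le|AB|$ gives $|s|\le3$. For $q=\pm3$ one uses the classical fact $\Gamma_{\pm3}=\Gamma_1(3)$.

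Second, no discriminant analysis is needed for $|q|<4$ when $|t|>1$. Once $U_q\neq H$ is established, Brenner's theorem forces $|q|<4$: for $|q|\ge4$ one has $U_q=H$, as recalled in the proof of Theorem~\ref{Them: finite index}. In fact hypothesis (2) is never used in the argument; it only guarantees that the equation has infinitely many solutions.
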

Note that the equation
\[
ABs^2+(A+B+C)st+t^2=1
\]
always has an integral solution $(0,1)$. Moreover, the discriminant condition implies that the form is indefinite with a non-square discriminant. Hence, by the classical theory of binary quadratic forms, this equation admits infinitely many integral solutions. Consequently, each choice of $A,B,C$ satisfying the hypotheses of Theorem~\ref{thm: quadratic form} gives rise to infinitely many arithmetic groups $\Gamma_{q}$.

The next theorem presents a distinct infinite family of arithmetic groups.

\begin{mthm}\label{thm:Pell-equation}
Let $k\in\mathbb Z\setminus\{0,-1\}$, and let $(u,v)$ be an integral
solution of
\[
(k+1)u^2-kv^2=1
\]
such that $uv\neq\pm1$. Then
\[q_\pm(u,v,k):=\tfrac{-\left(1+2(k+1)uv\right)\pm\left((k+1)u^2+kv^2\right)}{2k(k+1)uv}\in \mathcal{CS}.\]  Moreover,
\[
\lim_{u\to\infty} q_\pm(u,v,k)=\tfrac{-1}{k}\pm\sqrt{\tfrac{1}{k(k+1)}}.
\]
\end{mthm}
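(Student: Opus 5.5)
The plan is to deduce the statement from Theorem~\ref{Thm: characteristic S-arithmetic}. Write $q=q_\pm(u,v,k)$. I would check three things:
\begin{enumerate}
\item $q$ is a nonzero rational with $|q|<4$ and $q\neq\pm3$;
\item some explicit word in $A$ and $B_q$ has vanishing lower-left entry;
\item that word does not lie in $H=\langle A\rangle$.
\end{enumerate}
The limit statement is separate and elementary. Since $(k+1)u^2-kv^2=1$, we have $v/u\to\pm\sqrt{(k+1)/k}$ as $u\to\infty$. Dividing numerator and denominator of $q_\pm$ by $u^2$ then gives $\tfrac{-1}{k}\pm\sqrt{\tfrac{1}{k(k+1)}}$.

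The key observation is that $q_\pm$ are the two roots of a quadratic whose discriminant is a perfect square because of the Pell relation. Indeed,
\[
\bigl((k+1)u^2+kv^2\bigr)^2=\bigl((k+1)u^2-kv^2\bigr)^2+4k(k+1)u^2v^2=1+4k(k+1)u^2v^2 .
\]
So $q_\pm$ are the roots of
\[
k(k+1)uv\,q^2+\bigl(1+2(k+1)uv\bigr)q+c_0=0 .
\]
The constant $c_0$ is forced by matching discriminants: $(1+2(k+1)uv)^2-4k(k+1)uv\,c_0=1+4k(k+1)u^2v^2$, which gives $c_0=(1+uv)/k$. After multiplying by $k$, all coefficients are polynomials in $u,v,k$, though integrality of $c_0$ or a rescaled version must be tracked.

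Next I compare this quadratic with the lower-left entry of the alternating word
\[
W=B_q^{a}A^{b}B_q^{c}A^{d}B_q^{e},\qquad a,b,c,d,e\in\mathbb Z .
\]
By the continuant formula, this entry equals
\[
q\Bigl[(a+c+e)+q\,(abc+abe+ade+cde)+q^2\,abcde\Bigr].
\]
The task is to choose integer exponents, polynomial in $u,v,k$, for which the bracket is a nonzero integer multiple of the quadratic above. That requires $abcde=\lambda k(k+1)uv$, $abc+abe+ade+cde=\lambda(1+2(k+1)uv)$ and $a+c+e=\lambda c_0$ for a suitable nonzero integer $\lambda$. A natural first attempt is a symmetric choice ($a=e$, $b=d$) with exponents drawn from $\{1,u,v,k,k+1\}$ up to signs. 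If no length-five word fits, I would allow a longer word, or a conjugate of a length-three word $B_q^{a}A^{b}B_q^{c}$, whose lower-left entry is $q(a+c+abcq)$.

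Once $W\in U_q$ is found, I show $W\notin H$. The diagonal entries of $W$ are explicit, e.g. $1+bcq$-type expressions. It suffices to show that the $(1,1)$ entry is not $\pm1$ (so $W$ is not unipotent up to sign), or, if it is $\pm1$, that the upper-right entry is not the integer that a power of $A$ would require. Here the hypothesis $uv\neq\pm1$ should enter, along with $k\neq0,-1$. The bound $|q|<4$ and $q\neq\pm3$ come from direct estimates using the Pell relation, with finitely many small cases checked by hand.

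The main obstacle I expect is the second step: finding exponents that exactly reproduce the coefficients, including the awkward constant $c_0=(1+uv)/k$. Its integrality depends on the congruence $uv\equiv-1\pmod k$, which I would try to derive from $(k+1)u^2-kv^2=1$ (it gives $u^2\equiv1\pmod k$, but more is needed). If it fails, I would rescale by $\lambda$, or replace some exponents by rational multiples whose denominators cancel in the relevant entries.
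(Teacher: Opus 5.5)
Your setup is the paper's: the same word $B_q^{a}A^{b}B_q^{c}A^{d}B_q^{e}$, the same factorisation of its lower-left entry, the same quadratic (yours is the paper's divided by $k(k+1)$), and the same use of the Pell relation to make the discriminant a square. But the step you flag as the main obstacle is the whole content of the proof, and you leave it open. No exponents are produced, so no element of $U_q$ is exhibited and nothing is shown to lie outside $H$.

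The route you suggest for closing that step fails. The congruence $uv\equiv-1\pmod k$ is false in general: $k=3$, $(u,v)=(13,15)$ satisfies $4u^2-3v^2=1$, yet $uv=195\equiv0\pmod 3$. So $c_0=(1+uv)/k$ is genuinely non-integral, and the scaling $\lambda$ must supply the missing factor of $k$. The exponents that work are also not of the symmetric shape you propose, and one of them is the product $(k+1)uv$.

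The missing choice is $\lambda=k(k+1)$ and
\[
(a,b,c,d,e)=\bigl(k,\;k+1,\;1,\;k,\;(k+1)uv\bigr).
\]
For these exponents:
\begin{itemize}
\item $abcde=k^2(k+1)^2uv$;
\item $abc+abe+ade+cde=k(k+1)\bigl(1+2(k+1)uv\bigr)$;
\item $a+c+e=(k+1)(1+uv)$.
\end{itemize}
This is exactly $\lambda$ times your quadratic. The paper finds it by imposing $a=kc$, $b=k+1$, $d=k$, which turns the discriminant into $k^2(k+1)^2c^2\bigl(c^2+\tfrac{4k}{k+1}e^2\bigr)$. Taking $c=1$ and $e=(k+1)uv$ then gives $k^2(k+1)^2\bigl((k+1)u^2+kv^2\bigr)^2$, by your identity.

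To exclude the word from $H$, you need no estimate of the diagonal. If it were in $H$, both diagonal entries would equal $1$, that is, $(bc+be+de)q+bcde\,q^2=0$ and $(ab+ad+cd)q+abcd\,q^2=0$. For $q\neq0$ these give
\[
q=-\bigl(\tfrac{1}{bc}+\tfrac{1}{cd}+\tfrac{1}{de}\bigr)=-\bigl(\tfrac{1}{ab}+\tfrac{1}{bc}+\tfrac{1}{cd}\bigr),
\]
forcing $ab=de$, i.e.\ $uv=1$; this is Proposition~\ref{prop: 2-step}. The other hypothesis, $uv\neq-1$, is what ensures $q_-\neq0$. On the other hand $q_+\neq0$ always, since $kv=(k+1)u$ is incompatible with the Pell equation.

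Finally, your step (1) is unnecessary. Once $U_q\neq H$, Brenner's theorem gives $|q|<4$, and $U_{\pm3}=H$ gives $q\neq\pm3$.
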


We conclude by exploring the computational aspects of the problem. Theorem~\ref{Thm: small denominators} is intended primarily as an illustration of Theorem~\ref{Thm: characteristic S-arithmetic}. The range $1\le |s|\le 21$ is chosen so that the verification can be recorded in a reasonably short appendix.
 
\begin{mthm}\label{Thm: small denominators}
    For every $q=\tfrac{s}{t}\in \mathbb{Q}\cap (-4,4)$ with $1\leq |s|\leq 21$, $q\in \mathcal{CS}$.
\end{mthm}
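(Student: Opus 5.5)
We reduce Theorem~\ref{Thm: small denominators} to Theorem~\ref{Thm: characteristic S-arithmetic}: for each admissible $q=\tfrac{s}{t}$ with $q\neq\pm3$ we exhibit an explicit word $W$ in $A^{\pm1},B_q^{\pm1}$ whose product is upper triangular but not a power of $A$. Then $U_q\neq H$, and the criterion gives $\Gamma_q=\Gamma_1^{(t)}(s)$. The cases $q=\pm3$ are treated separately: there $\Gamma_{\pm3}$ is generated by integral matrices and is the classical congruence subgroup $\Gamma_1(3)$, a level-$3$ group generated by two parabolics. We check this directly, e.g.\ by a fundamental domain or index computation.

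\textbf{Step 1: reductions.} First reduce by symmetry. Conjugation by $\mathrm{diag}(1,-1)$ sends $A\mapsto A^{-1}$ and $B_q\mapsto B_q^{-1}$, so $\Gamma_q=\Gamma_{-q}$ up to this conjugation. The target congruence subgroup is preserved as well, so it suffices to treat $q>0$. For fixed $s$, the set of $t$ with $\gcd(s,t)=1$ and $s/t<4$ is infinite, so we cannot simply enumerate. Instead we will use periodicity: the products of short words in $A$ and $B_q$ are polynomials in $q$. For a fixed word shape $W=A^{a_1}B_q^{b_1}\cdots$, the lower-left entry is a polynomial $P_W(q)$. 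Following the non-freeness literature \cite{MR1370894,MR4505367}, we choose families of words depending on an integer parameter, tuned so that $P_W(q)=0$. The identities these papers use to certify relations are exactly of the form ``some word is upper triangular''. We will check that the upper-triangular element obtained is not in $H$: either its diagonal entry is not $1$, or its upper-right entry is not an integer, i.e.\ has denominator divisible by a prime of $t$.

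\textbf{Step 2: small $t$ and the Diophantine families.} For $q$ close to $4$ (small $t$ relative to $s$) a finite list remains, and we handle it by a computer search over short words. This search is recorded in the appendix. For general $t$, the key tool is the classical trick: if $B_q^{m}A^{n}B_q^{m'}$, or a similar short word, maps $\infty$ to a point $x$, and a further word maps $x$ back to $\infty$, we obtain a stabilizer element. Concretely, we look for $x\in\mathbb{Q}$ in the orbit $\Gamma_q\cdot\infty$ reached by two different reduced paths. Theorems~\ref{thm: quadratic form} and~\ref{thm:Pell-equation} cover infinite subfamilies. The remaining $q$ with $|s|\le 21$ fall, by residue of $t$ modulo $s$, into finitely many classes, and for each class a uniform word works. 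This uses the fact that $A^kB_qA^{k'}$ acting on $\infty$ depends on $t\bmod s$ only through the Farey-type expansion.

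\textbf{Main obstacle.} The hard step is the uniformity in $t$: producing, for each $s\le 21$ and each residue class of $t$ mod $s$ (and each size range of $q$), a single word family certified to be upper triangular and outside $H$. We will first try to adapt the Kim--Koberda relation families, which are indexed exactly this way. We will then verify their non-membership in $H$ by a direct $2$-adic or $t$-adic valuation check on the upper-right entry. If this fails for a particular class, we fall back on subdividing that class by the size of $q$ and running the orbit search described above.
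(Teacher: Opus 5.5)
Your outline has the same architecture as the paper's argument. You reduce to Theorem~\ref{Thm: characteristic S-arithmetic}, recycle the Kim--Koberda relation words, and settle finitely many leftovers with explicit words. But the step that carries the proof is the one you call the ``main obstacle'' and leave open, so what you have is a plan rather than a proof.

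The missing idea is a uniform lemma that turns an $s$-good residue class into an element of $U_q\setminus H$ for all $t$ in the class at once. Concretely, let $(w;sm)$ be $s$-good with witness $y$. Set $D=\gcd(w,sm)$, $d=\gcd(w,s)$, $w'=w/D$, $s'=s/d$, $m'=md/D$. For $t\in(w;sm)$ set $t'=t/D$, $q'=s'/t'$, $u_1=\tfrac{1\pm s'm'y}{w'}$, $u_2=\tfrac{t'-w'}{s'm'}$, and $c=\mp y-u_1u_2$; all of these are integers. Then
\[
B_{q'}^{-t'c}\,A^{m'}\,B_{q'}^{-u_2}\,A^{-m'c/(\mp y)}\,B_{q'}^{\mp t'y}
\]
is upper unipotent. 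Its upper-right entry is congruent modulo $\mathbb Z$ to $\lambda\,\tfrac{w'}{t'}$ for some $\lambda\in\mathbb Z$ prime to $t'$; the coprimality is forced by the identity $t'u_1+s'm'c=1$. Hence the element lies outside $H$ exactly when $t\nmid w$. This is what gives uniformity in $t$ (Lemma~\ref{lemma: s-good to CS}).

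A valuation check ``to be done later'' is no substitute. The non-freeness words only guarantee landing in $U$, and this very word does land in $H$ when $t\mid w$. You also need Corollary~\ref{cor: /n} to pass from $q'$ back to $q=q'\cdot\tfrac{d}{D}$, which your plan never mentions.

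Second, your plan does not establish that only finitely many cases are left over. That comes from the Kim--Koberda covering lemma (Lemma~\ref{lemma: enough s-good}): for $2\le s\le 27$, $s\neq 24$, finitely many $s$-good classes $(w_i;sm_i)$ with $m_i\mid 60$ and $0\le w_i<sm_i$ cover every $t$ prime to $s$. These classes are taken modulo $sm_i$, not modulo $s$. The leftover cases are the $t$ that divide the good representative of their class, not the $q$ near $4$. For example, the explicit verifications needed include $\tfrac{21}{29}$, $\tfrac{20}{29}$, $\tfrac{18}{23}$ and $\tfrac{21}{13}$.

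Without such a covering, your fallback cannot terminate. A residue class contains infinitely many $t$, and any subdivision by the size of $q$ leaves an interval $(0,\varepsilon)$ containing infinitely many of them, so an open-ended orbit search certifies nothing. Theorems~\ref{thm: quadratic form} and~\ref{thm:Pell-equation} do not close this gap either, since nothing shows their families exhaust the leftover classes.

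Finally, many leftovers are disposed of cheaply. Corollary~\ref{Cor: t=kspm1}, together with the integral cases, already handles $s\in\{1,2,3,4,6\}$. Corollary~\ref{cor: /n} handles cases such as $\tfrac{11}{6}=\tfrac12\cdot\tfrac{11}{3}$. Also, $\Gamma_{-q}=\Gamma_q$ holds on the nose because $B_{-q}=B_q^{-1}$, so no conjugation is needed.
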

Other immediate consequences of Theorem~\ref{Thm: characteristic S-arithmetic} include the following.
\begin{corollary}\label{cor: /n}
 If $q\in\mathcal{CS}$, then
$\tfrac{q}{n}\in\mathcal{CS}$ for every
$n\in\mathbb Z\setminus\{0\}$.
\end{corollary}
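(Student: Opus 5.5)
The plan is to apply Theorem~\ref{Thm: characteristic S-arithmetic} to $q/n$. That reduces the problem to two things: exhibiting an element of $U_{q/n}\setminus\langle A\rangle$, and handling the exceptional values $q/n\in\{0,\pm3\}$.

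\emph{The exceptional values.} The value $0$ cannot occur, since $q\neq 0$. The value $q/n=\pm3$ would force $|q|=3|n|\ge 3$, which requires $n=\pm1$ and $q=\pm3$. But $\pm3\notin\mathcal{CS}$ would not follow here: the text after Theorem~\ref{Thm: characteristic S-arithmetic} says $\Gamma_{\pm3}$ is a congruence subgroup. So for $n=\pm1$ there is nothing to prove for $q=\pm 3$. More generally, $n=\pm1$ is handled by symmetry: $\Gamma_{-q}$ is the conjugate of $\Gamma_q$ by $\operatorname{diag}(1,-1)$ composed with inversion of $A$, and this preserves the congruence description. So assume $|n|\ge2$. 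Then $|q/n|<2$, so $q/n\ne\pm3$ and $q/n\in(-4,4)$.

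\emph{Relating the two groups.} Write $q=s/t$ in lowest terms. The key observation is a conjugation. Let $D=\operatorname{diag}(n,1)$ (up to scaling, acting on $\SL_2$ by conjugation). Then
\[
D^{-1}A^nD=\begin{pmatrix}1&1\\0&1\end{pmatrix},\qquad D^{-1}B_qD=\begin{pmatrix}1&0\\ qn&1\end{pmatrix}
\]
for a suitable choice of $D$. The relation we actually want is the reverse one: $\langle A^n,B_{q/n}\rangle$ is conjugate to $\langle A,B_q\rangle$ by $\operatorname{diag}(\sqrt n,1/\sqrt n)$. Indeed, conjugation by this matrix multiplies upper-right entries by $n$ and lower-left entries by $1/n$. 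Hence
\[
\Gamma_q\cong\langle A^n,B_{q/n}\rangle\le\Gamma_{q/n}.
\]
Since $q\in\mathcal{CS}$, Theorem~\ref{Thm: characteristic S-arithmetic} (or directly $\Gamma_q=\Gamma_1^{(t)}(s)$) yields an upper-triangular element $u\in\Gamma_q$ with $u\notin\langle A\rangle$. For instance $u$ may have diagonal entries different from $1$, or be unipotent with non-integral upper-right entry. Transport $u$ by the conjugation to an element $u'\in\langle A^n,B_{q/n}\rangle\subset\Gamma_{q/n}$. It is still upper triangular.

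\emph{The main obstacle: showing $u'\notin\langle A\rangle$.} The conjugation preserves diagonal entries and multiplies the upper-right entry by $n$. If $u$ has a diagonal entry $\neq1$, then so does $u'$, and we are done. If $u$ is unipotent with upper-right entry $b\notin\mathbb{Z}$, then $u'$ has entry $nb$, which might become integral. To avoid this I would choose $u$ inside the congruence group $\Gamma_1^{(t)}(s)$ to be a non-unipotent diagonal-type element. For example, take
\[
\begin{pmatrix}a&b\\0&a^{-1}\end{pmatrix}
\]
with $a$ a unit of $\mathbb{Z}[1/t]$ satisfying $a\equiv_t1\pmod s$. When $t\neq\pm1$, a power of a prime dividing $t$ gives such an $a\neq\pm1$; in that case $a\not\equiv$ and the diagonal survives. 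When $t=\pm1$, no such unit exists, but $\Gamma_1(s)$ contains $\begin{pmatrix}1&1/?\\0&1\end{pmatrix}$ only with integral entries. Instead I would use $-I$ when $s\le2$, or observe that for $|q|<4$ integral we have $s\in\{1,2,3\}$. These small cases can be checked directly, since $q/n=\pm1/m,\pm2/m,\pm3/m$ are covered by Theorem~\ref{Thm: small denominators} (as $|s|\le 21$).

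This case analysis, ensuring the transported element stays outside $\langle A\rangle$, is the step I expect to require care. A cleaner alternative I would try first is simply to note that $q/n$ has numerator dividing the numerator of $q$ times units. If the numerator stays at most $21$ one can quote Theorem~\ref{Thm: small denominators}; otherwise the diagonal-element argument above applies.
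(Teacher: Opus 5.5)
Your main argument is correct and is essentially the one the paper leaves implicit when it calls this an immediate consequence of Theorem~\ref{Thm: characteristic S-arithmetic}. Conjugation by $\operatorname{diag}(n,1)$ carries $\Gamma_q$ onto $\langle A^n,B_{q/n}\rangle\le\Gamma_{q/n}$ and fixes diagonal matrices. So for non-integral $q$ the element $\operatorname{diag}(p^k,p^{-k})\in\Gamma_1^{(t)}(s)$, with $p\mid t$ and $s\mid p^k-1$, lies in $U_{q/n}\setminus H$, and Theorem~\ref{Thm: characteristic S-arithmetic} applies. For $q=\pm1,\pm2$ the element $-I$ does the same job.

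Two cautions.

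\emph{The case $q=\pm3$.} Your claim that $q\in\mathcal{CS}$ always supplies some $u\in U_q\setminus H$ is false for $q=\pm3$, since there $U_q=H$. The values $\pm3/n$ should therefore be settled directly by Corollary~\ref{Cor: t=kspm1}, because their numerator is $1$ or $3$. When $n=\pm3$ the value is $\pm1$, which is the integral case.

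\emph{The ``cleaner alternative''.} Quoting Theorem~\ref{Thm: small denominators} is circular. The paper's proof of that theorem uses this very corollary: see Lemma~\ref{lemma: s-good to CS} and the appendix, for example $\tfrac{11}{6}=\tfrac12\cdot\tfrac{11}{3}$.
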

\begin{corollary}\label{Cor: t=kspm1}
    If $t\equiv\pm1\pmod s$ and $(t^2-1)st\neq0$, then $\tfrac{s}{t}\in\mathcal{CS}$.
\end{corollary}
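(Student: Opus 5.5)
The plan is to apply Theorem~\ref{Thm: characteristic S-arithmetic} directly: we exhibit an explicit element of $U_q\setminus H$ and check that $q\neq 0,\pm3$.

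\emph{Exceptional values and range.} Since $st\neq0$ we have $q\neq0$. Moreover $q=\pm3$ would force $t=\pm1$, which the hypothesis $t^2\neq1$ excludes. Write $t=ks+\varepsilon$ with $\varepsilon\in\{\pm1\}$ and $k\in\mathbb Z$. Because $|t|\geq2$, we have $k\neq0$. If $|s|\geq2$, then $|t|\geq|s|-1$, so $|q|\leq |s|/(|s|-1)\leq 2$. If $|s|=1$, then $|q|=1/|t|\leq 1/2$. Hence $q\in(-4,4)$ and Theorem~\ref{Thm: characteristic S-arithmetic} applies.

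\emph{The element.} Consider words of the form $W=B_q^{a}A^{b}B_q^{c}$. A direct multiplication gives
\[
B_q^aA^b=\begin{pmatrix}1&b\\ aq&1+abq\end{pmatrix},
\]
so the lower-left entry of $W$ is $aq+(1+abq)cq$. This entry vanishes exactly when $t(a+c)=-abcs$.

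We take $a=1$ and $b=-k$. The condition becomes $t+tc=kcs=c(t-\varepsilon)$, that is, $t=-\varepsilon c$. So we set $c=-\varepsilon t$ and
\[
W=B_q\,A^{-k}\,B_q^{-\varepsilon t}\in\Gamma_q .
\]
Then $W$ is upper triangular, so $W\in U_q$.

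\emph{$W\notin H$.} The upper-left entry of $W$ is $1+(-k)(-\varepsilon t)q=1+\varepsilon ks=1+\varepsilon(t-\varepsilon)=\varepsilon t$. Since $t^2\neq1$, this entry is not $\pm1$. Every element of $H=\langle A\rangle$ has diagonal entries equal to $1$, so $W\notin H$.

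Thus $U_q\neq H$, and Theorem~\ref{Thm: characteristic S-arithmetic} gives $\Gamma_q=\Gamma_1^{(t)}(s)$, i.e.\ $q\in\mathcal{CS}$.

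The only real work is finding the word $W$, which reduces to solving $t(a+c)=-abcs$. The congruence $t\equiv\pm1\pmod s$ is exactly what makes the choice $a=1$, $b=-k$ succeed. Everything else is routine verification.
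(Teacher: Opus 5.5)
Your proof is correct and follows the paper's argument. The paper likewise normalizes to $t=ks+1$, observes that the one-step word $B_q^{-1}A^{k}B_q^{t}=\begin{pmatrix} t & k\\ 0 & 1/t\end{pmatrix}$ lies in $U_q\setminus H$, and applies Theorem~\ref{Thm: characteristic S-arithmetic}. Your word $B_qA^{-k}B_q^{-\varepsilon t}$ is the same construction handled uniformly in the sign $\varepsilon$, and your explicit checks that $q\neq 0,\pm 3$ and $|q|<4$ supply details the paper leaves implicit.
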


Before discussing the proof strategy, let us recall a common method for proving that $\Gamma_q$ is not free. Let $F(a,b)$ be the free group of rank two and define
\[
\phi_q:F(a,b)\to \SL(2,\mathbb R)
\]
by $\phi_q(a)=A$ and $\phi_q(b)=B_q$. A standard approach, used for example in \cites{MR1370894,MR4505367}, is to construct a word $w\in F(a,b)$ such that
\[
w\neq a^n,\qquad \phi_q(w)\in U.
\]

The key observation of the present paper is that many such constructions in fact produce elements of $U_q\setminus H$. By Theorem~\ref{Thm: characteristic S-arithmetic}, this stronger conclusion is sufficient to establish arithmeticity. Consequently, techniques originally developed to prove non-freeness may be reinterpreted as tools for constructing arithmetic groups.

This principle underlies all of our applications. Theorem~\ref{Thm: small denominators} follows from a careful analysis of the words constructed in \cite{MR4505367}, while Theorems~\ref{thm: quadratic form} and \ref{thm:Pell-equation} arise from different families of words that lead to quadratic Diophantine equations.

We now turn to the proof of Theorem~\ref{Thm: characteristic S-arithmetic}. The case $q\in\mathbb{Z}$ is straightforward, so we assume that $q\notin\mathbb{Z}$.

The proof of Theorem~\ref{Thm: characteristic S-arithmetic} proceeds in two stages. First, an element of $U_q\setminus H$ is bootstrapped into the unipotent lattice needed to apply the arithmeticity theorem of Venkataramana \cite{MR1306038}. This yields $S$-arithmeticity of $\Gamma_q$. Second, Serre's congruence subgroup theorem \cite{MR272790} is used to identify the resulting arithmetic subgroup with $\Gamma_1^{(t)}(s)$.

Finally, Theorem~\ref{Thm: characteristic S-arithmetic} suggests that the arithmeticity problem for $\Gamma_q$ may be governed by the structure of $U_q$. It would be interesting to determine whether non-freeness already implies $U_q\neq H$. Combined with Theorem~\ref{Thm: characteristic S-arithmetic}, such a result would establish a weak version of Conjecture~\ref{Con: arithmeticty}.

The paper is organized as follows.
Section~\ref{Sec: prelinimaries} collects the preliminaries used throughout the paper. In Section~\ref{Sec: proof of Theorem A},
we prove Theorem~\ref{thm: Higher-rank} and its generalizations. Section~\ref{Sec: proof of theorem B} is devoted to the proof of the
rank-one arithmeticity criterion, Theorem~\ref{Thm: characteristic S-arithmetic}. In Section~\ref{sec:rank-one-general}, we
discuss an application of this criterion to groups generated by
opposite unipotents in general simple real Lie groups of rank one. 

The remaining sections develop applications of the rank-one
criterion. Section~\ref{Sec: step 1 and 2} recalls one-step and
two-step relation numbers and explains how the corresponding
relations produce upper-triangular elements. In
Section~\ref{Sec: quadratic forms to unipotents}, we prove Theorems~\ref{thm: quadratic form} and ~\ref{thm:Pell-equation}.
Theorem~\ref{Thm: small denominators} is proved in  Section~\ref{Sec: proof of theorem E}.
Finally, Appendix~\ref{Appen: verify} records the explicit words and matrix identities used in the proof of Theorem~\ref{Thm: small denominators}.

\section{Preliminaries}\label{Sec: prelinimaries}
In this section, we recall several results that will be used later in the paper. We state them in a slightly simplified form, which is sufficient for our applications.

\subsection{Unipotents and arithmeticity}\label{sec: horospherical lattice}

Let $F$ be a global field, and let $G$ be a connected, absolutely
almost simple algebraic group defined over $F$. Assume that
$\operatorname{rank}_F(G)\geq1$. Let $P^+$ be a
minimal $F$-parabolic subgroup of $G$, with unipotent radical $U^+$,
and let $P^-$ be an opposite minimal $F$-parabolic subgroup, with
unipotent radical $U^-$. Fix a maximal $F$-split torus
\[
T\subseteq P^+\cap P^-.
\]
We write
\[
\Phi=\Phi(G,T)
\]
for the corresponding relative root system.

Let $S$ be a finite set of places of $F$ containing all
Archimedean places when $\operatorname{char}(F)=0$, and let
\[
\mathcal O_{F,S}
=
\left\{
x\in F:
v_{\mathfrak p}(x)\geq 0
\text{ for every non-Archimedean place }
\mathfrak p\notin S
\right\}
\]
be the ring of $S$-integers. We use the notation
\[
\operatorname{rank}_{S}(G)
=
\sum_{v\in S}\operatorname{rank}_{F_v}(G).
\]

After fixing an $\mathcal O_{F,S}$-model of $G$, we write
\[
U^\pm(\mathcal O_{F,S})
\]
for the corresponding groups of $S$-integral points.

A subgroup $\Lambda\leq G(F)$ is called \textbf{$S$-arithmetic} if
$\Lambda$ and $G(\mathcal O_{F,S})$ are commensurable; equivalently, $\Lambda\cap G(\mathcal O_{F,S})$
has finite index in both $\Lambda$ and $G(\mathcal O_{F,S})$.

The following arithmeticity theorem is the form that will be used throughout the paper.

\begin{theorem}
\cite{MR1306038,MR1141802}
\label{thm: horospherical lattices}
Assume that $\operatorname{rank}_{S}(G)\geq2$. If
$\operatorname{rank}_F(G)=1$, assume in addition that
$\operatorname{char}(F)\neq2$.
Let $\Lambda^+$ and $\Lambda^-$ be finite-index subgroups of
\[
U^+(\mathcal O_{F,S})
\qquad\text{and}\qquad
U^-(\mathcal O_{F,S}),
\]
respectively. Then
\[
\langle\Lambda^+,\Lambda^-\rangle
\]
is a $S$-arithmetic subgroup of $G(F)$.
\end{theorem}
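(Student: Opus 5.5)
This statement is the Raghunathan--Venkataramana theorem, quoted from \cite{MR1306038,MR1141802}; the paper does not reprove it. If I were to reconstruct the argument, I would follow Raghunathan's strategy, which rests on the congruence subgroup property and on bounded generation by unipotents.

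\textbf{Step 1: normalize and reduce to a normal-closure statement.} Set $\Delta=\langle\Lambda^+,\Lambda^-\rangle$. Since $\Lambda^\pm\subseteq U^\pm(\mathcal O_{F,S})\subseteq G(\mathcal O_{F,S})$, we have $\Delta\subseteq G(\mathcal O_{F,S})$. It therefore suffices to show that $\Delta$ has finite index in $G(\mathcal O_{F,S})$. The natural intermediate target is the following: there is a nonzero ideal $\mathfrak q$ of $\mathcal O_{F,S}$ such that $\Delta$ contains
\[
E(\mathfrak q):=\langle U^+(\mathfrak q),U^-(\mathfrak q)\rangle^{G(\mathcal O_{F,S})},
\]
the normal closure in $G(\mathcal O_{F,S})$ of the congruence unipotent subgroups.

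\textbf{Step 2: show $\Delta$ contains $E(\mathfrak q)$.} I would aim to show that $\Delta$ is normalized by a finite-index subgroup of $G(\mathcal O_{F,S})$. The plan has three parts.
\begin{enumerate}
\item Use the opposite unipotents to produce elements of the Levi subgroup $\Delta\cap P^+$. In the split rank-one situation this goes through the identity $x_+(a)\,x_-(-a^{-1})\,x_+(a)=w(a)$. In higher relative rank one uses the commutator relations $[U_\alpha,U_\beta]\subseteq U_{\alpha+\beta}$ to generate all root subgroups, each to some congruence level.
\item Show that conjugating $\Lambda^\pm$ by these Weyl-type and torus elements stays, up to finite index, inside $\Delta$.
\item Deduce that $\Delta$ contains a subgroup normal in a congruence subgroup.
\end{enumerate}
The hypothesis $\operatorname{rank}_S(G)\ge2$ enters in part (1). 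It guarantees that $\mathcal O_{F,S}^\times$ is infinite, or that the relative rank is at least $2$. Either way, the torus elements obtained act with infinite order, and this lets one reach every congruence level.

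\textbf{Step 3: identify $E(\mathfrak q)$ with a finite-index subgroup.} By the congruence subgroup property and the centrality results of Raghunathan in rank at least $2$ (Serre's theorem in the case of $\SL_2$), $E(\mathfrak q)$ has finite index in $G(\mathcal O_{F,S})$.

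\textbf{Main obstacle.} The hard part is Step 2 when $\operatorname{rank}_F(G)=1$, since no commutator relations between distinct root groups are then available. In that case one must use the $S$-units, via Venkataramana's argument, together with the explicit form of the rank-one relative root groups. The assumption $\operatorname{char}(F)\neq2$ is needed there to handle the non-reduced $BC_1$ root system and the resulting quadratic forms.
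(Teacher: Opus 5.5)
The paper gives no proof of this statement. It quotes the result as a black box from Venkataramana and Raghunathan, so your first sentence already matches the paper's entire treatment. Your reconstruction, however, is not a proof, and the gap is in Step~2, which is the whole content of the theorem.

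In part~(1), the identity $x_+(a)x_-(-a^{-1})x_+(a)=w(a)$ needs $a$ and $-a^{-1}$ to lie in the level ideal at the same time. That forces the ideal to be all of $\mathcal O_{F,S}$. Indeed, if $\Lambda^\pm=U^\pm(\mathfrak q)$ with $\mathfrak q$ proper, then $\Delta$ lies in the principal congruence subgroup of level $\mathfrak q$, which contains no Weyl-group representatives at all. What you can actually produce are Borel elements. For $a\in\mathfrak q$ and $b$ with $1+ab\in\mathcal O_{F,S}^\times$, the product $x_+(a)\,x_-(b)\,x_+\bigl(-a(1+ab)^{-1}\bigr)$ is lower triangular with diagonal $\operatorname{diag}\bigl(1+ab,(1+ab)^{-1}\bigr)$. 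This is exactly the element $V$ in the proof of Lemma~\ref{Lemma: 1/t}.

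Part~(2) is vacuous: conjugating $\Lambda^\pm$ by elements already in $\Delta$ trivially stays in $\Delta$. Part~(3) does not follow from it. To reach the target you set in Step~1, you must show that $g\,U^{\pm}(\mathfrak q')\,g^{-1}\subseteq\Delta$ for $g$ ranging over a finite-index subgroup of $G(\mathcal O_{F,S})$, that is, for elements \emph{not} known to lie in $\Delta$. In $F$-rank at least two, this is where the commutator relations do their work (Tits for split groups, Raghunathan in general): one writes root elements as commutators of non-opposite root elements. In $F$-rank one no such device exists. This step is the substance of Venkataramana's paper, and your sketch defers it rather than supplying it.

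Your account of where $\operatorname{rank}_S(G)\geq2$ enters is also wrong. It does not force $\mathcal O_{F,S}^\times$ to be infinite or $\operatorname{rank}_F(G)\geq 2$. Take $F=\mathbb Q$, $S=\{\infty\}$ and $G=\SL_2(D)$ with $D$ an indefinite rational quaternion division algebra. Then $\operatorname{rank}_{\mathbb Q}G=1$, $\operatorname{rank}_{\mathbb R}G=3$, and $\mathbb Z^\times=\{\pm1\}$. The split torus has only finitely many integral points, so it supplies no elements of infinite order; such elements must come from the anisotropic part of the Levi subgroup.

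Step~3 is fine in substance. The finite index of $E(\mathfrak q)$ is due to Raghunathan. It also follows directly from Margulis' normal subgroup theorem, since $E(\mathfrak q)$ is an infinite normal subgroup of an $S$-arithmetic lattice of $S$-rank at least two.

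Finally, Step~1 silently assumes $\Lambda^\pm\supseteq U^\pm(\mathfrak q)$ for some nonzero ideal $\mathfrak q$. This holds in characteristic zero. It fails in positive characteristic, which the statement nominally covers, because there $U^\pm(\mathcal O_{F,S})$ has finite-index subgroups that are not congruence subgroups.
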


Thus, in order to prove arithmeticity, it is enough to show that the
group under consideration contains sufficiently large lattices in
both opposite unipotent radicals.

Further generalizations involving horospherical lattices can be found in \cites{MR2718935,MR4101737}.
\subsection{Congruence subgroup property}
Let $R$ be a commutative ring and let $\Gamma=\SL(n,R)$. For an ideal $I\triangleleft R$, the \textbf{principal congruence subgroup of level $I$} is defined by
\[
\Gamma(n,I)=\ker\bigl(\SL(n,R)\to \SL(n,R/I)\bigr).
\]
A subgroup $H<\Gamma$ is called a \textbf{congruence subgroup} if it contains $\Gamma(n, I)$ for some nonzero ideal $I$.

The \textbf{congruence subgroup property} asserts that every finite-index subgroup of $\Gamma$ is a congruence subgroup. This problem has a long history. In particular, Bass--Milnor--Serre solved the problem for $\mathrm{SL}(n,\mathcal O_S)$ with $n\ge 3$ \cite{MR244257}, while Serre determined the congruence kernel for $\SL(2)$ over number fields and in the $S$-arithmetic setting \cite{MR272790}.

The following consequence is sufficient for our purposes.

\begin{theorem}\cites{MR244257,MR272790}\label{Thm: CSP}
\begin{enumerate}
    \item Let $n\ge 3$, $F$ be a number field, and $\mathcal{O}_F$ be its ring of algebraic integers. Then $\SL(n, \mathcal{O}_F)$ has the congruence subgroup property if and only if $F$ is not totally imaginary.

    \item $\SL\left(2,\mathbb Z\left[\tfrac{1}{t}\right]\right)$ has the congruence subgroup property for all $t\geq 2$.
\end{enumerate}
\end{theorem}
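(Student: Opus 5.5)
This theorem is quoted from the literature (Bass--Milnor--Serre and Serre), so the plan is to explain how each part reduces to those sources rather than to reprove the congruence subgroup problem.

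\emph{Part (1).} The plan is to cite Bass--Milnor--Serre \cite{MR244257} directly. The mechanism runs as follows:
\begin{enumerate}
\item For $n\ge3$, the congruence kernel $C$ is the kernel of $\widehat{\SL(n,\mathcal O_F)}\to\SL(n,\widehat{\mathcal O_F})$.
\item One first shows that every finite-index normal subgroup contains an elementary subgroup $E(n,\mathfrak q)$ for some nonzero ideal $\mathfrak q$. This uses the Mennicke symbol computation and the finite generation of $\SL(n,\mathcal O_F)$.
\item $C$ is then identified with the inverse limit of the groups $\SL(n,\mathcal O_F,\mathfrak q)/E(n,\mathfrak q)$.
\item These quotients are computed by universal Mennicke symbols. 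Via the power reciprocity law, they turn out to be cyclic of order $\mu(F)$ when $F$ is totally imaginary, and trivial otherwise.
\end{enumerate}
Hence $C=1$ exactly when $F$ has a real place, which gives the equivalence.

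\emph{Part (2).} The plan is to appeal to Serre \cite{MR272790}. There the congruence kernel for $\SL(2,\mathcal O_{F,S})$ is shown to be finite when $|S|\ge2$, and trivial when some place of $S$ is real. For $\mathcal O_{F,S}=\mathbb Z[\tfrac1t]$ with $t\ge2$, we have $S=\{\infty\}\cup\{p\mid t\}$. This set has at least two elements, and it contains the real place. So the kernel vanishes and every finite-index subgroup is congruence.

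\emph{Main obstacle.} I expect the only real care to lie in matching Serre's hypotheses with our ring. The set $S$ must contain a non-Archimedean place, which is why $t\ge2$ is needed: $\SL(2,\mathbb Z)$ itself fails the property. The fact that $\mathbb Q$ has a real place then kills the roots-of-unity contribution $\mu(\mathbb Q)=\{\pm1\}$. I would state these two checks explicitly and otherwise treat the result as quoted.
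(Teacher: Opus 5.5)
Your proposal matches the paper, which gives no argument beyond citing Bass--Milnor--Serre for (1) and Serre for (2). Your checks for $\mathbb Z[\tfrac1t]$, namely that $S=\{\infty\}\cup\{p\mid t\}$ has at least two places because $t\ge2$ and that $S$ contains the real place, are exactly the hypotheses Serre's theorem needs. There is one minor slip in your sketch of (1): a finite-index normal subgroup containing some $E(n,\mathfrak q)$ is elementary, and it is the Mennicke-symbol computation that shows $E(n,\mathfrak q)$ has finite index in $\SL(n,\mathcal O_F,\mathfrak q)$; moreover, in the totally imaginary case it is the inverse limit of these quotients that equals $\mu(F)$, not each quotient individually.
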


\subsection{Bellman--Ford Algorithm and Difference Constraints}

We shall use the following standard consequence of the Bellman--Ford algorithm concerning systems of difference constraints.

\begin{theorem}\cite[Third Edition, Theorem 24.9]{MR2572804}\label{thm: feasuble solution}
Let $G$ be the augmented constraint graph of a system of difference constraints
\[A\vec{x}\leq \vec{b}.\] If $G$ contains no negative-weight cycle, then there is a feasible solution to the system. Moreover, if $\vec{b}$ is integral, then the feasible solution is also integral.
\end{theorem}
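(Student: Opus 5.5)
We plan to follow the standard argument via shortest-path distances in the constraint graph. Recall the setup. Each row of $A\vec{x}\leq\vec{b}$ is a difference constraint $x_j-x_i\leq b_k$ on unknowns $x_1,\dots,x_n$. The augmented constraint graph $G$ has vertices $v_0,v_1,\dots,v_n$. For each such constraint it has a directed edge $v_i\to v_j$ of weight $b_k$. In addition it has an edge $v_0\to v_i$ of weight $0$ for every $i$. The candidate solution is
\[
x_i=\delta(v_0,v_i),\qquad i=1,\dots,n,
\]
where $\delta$ denotes the shortest-path weight from $v_0$ in $G$.

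The first step is to show these quantities are finite real numbers. Every $v_i$ is reachable from $v_0$ by the weight-$0$ edge, so $\delta(v_0,v_i)\leq 0<+\infty$. For the lower bound, any walk from $v_0$ to $v_i$ can be decomposed into a simple path together with a collection of cycles. By hypothesis every cycle has nonnegative weight, so deleting the cycles does not increase the weight. Hence
\[
\delta(v_0,v_i)=\min\{w(p): p \text{ a simple path from } v_0 \text{ to } v_i\}.
\]
This is a minimum over a finite set, so it is attained and is $>-\infty$.

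The second step is feasibility. Shortest-path distances satisfy the triangle inequality: for every edge $v_i\to v_j$ of weight $b_k$,
\[
\delta(v_0,v_j)\leq \delta(v_0,v_i)+b_k .
\]
This holds because appending that edge to a shortest path to $v_i$ gives a walk to $v_j$. The inequality says exactly $x_j-x_i\leq b_k$, so every constraint is satisfied and $A\vec{x}\leq\vec{b}$.

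For integrality, the formula from the first step realizes each $x_i$ as the weight of some simple path. That weight is a finite sum of edge weights, each of which is either $0$ or an entry of $\vec{b}$. So if $\vec{b}$ is integral, then every $x_i$ is an integer. One could alternatively cite the correctness of Bellman--Ford, whose relaxations only ever add entries of $\vec{b}$, but the simple-path argument is self-contained.

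The only point needing care is the reduction from walks to simple paths. It must be made explicitly, because both well-definedness of $\delta$ and integrality depend on it. I expect no real obstacle here: the reduction follows by induction on walk length, repeatedly excising a closed subwalk, each of which contains a cycle of nonnegative weight.
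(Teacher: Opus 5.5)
Your argument is correct and is essentially the standard proof behind the cited result; the paper proves nothing itself and quotes CLRS Theorem 24.9, which likewise takes $x_i=\delta(v_0,v_i)$ and uses the shortest-path triangle inequality as you do. The integrality clause, which the paper adds to the cited statement, follows from your simple-path description. For that reduction, excise a segment between two nearest occurrences of a repeated vertex: such a segment is a simple cycle, so its weight is nonnegative.
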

\subsection{Net subgroups and net determinants}
We briefly recall some results on commutative rings from \cite{MR669558}.

Let $R$ be a commutative semilocal ring with identity. Suppose we are given a square array
\[
\sigma=(\sigma_{ij}),\qquad 1\le i,j\le n,
\]
whose entries are ideals of $R$. We call $\sigma$ a \textbf{net of ideals} of order $n$ if
\[
\sigma_{ik}\sigma_{kj}\subseteq \sigma_{ij}
\]
for all indices $i,j,k$. Throughout this subsection, we assume that
\[
\sigma_{ii}=R,\qquad 1\le i\le n.
\]

For any subset $I\subseteq\{1,2,\ldots,n\}$, define
\[
\sigma_I=\sum_{i\in I,\;j\notin I}\sigma_{ij}\sigma_{ji}.
\]
For a matrix $A\in\GL(n,R)$, let $\operatorname{det}_I(A)$ denote the determinant of the principal submatrix of $A$ whose rows and columns are indexed by $I$.

We are interested in the subgroup
\[
\Gamma(\sigma)
=
\left\langle
E_{ij}(q_{ij})
:
q_{ij}\in\sigma_{ij},\ i\neq j
\right\rangle.
\]

The following characterization is proved in \cite{MR669558, MR717569}.

\begin{theorem}\label{thm: congruence conditions}
Let $R$ be a commutative semilocal Bézout ring with identity, and let $\sigma$ be a net of ideals of order $n$. Then a matrix $A\in\GL(n,R)$ belongs to $\Gamma(\sigma)$ if and only if the following conditions are satisfied:
\begin{enumerate}
    \item For all $i\neq j$,
    $A_{ij}\in\sigma_{ij}$.
    \item For every subset $I\subseteq\{1,2,\ldots,n\}$, $\operatorname{det}_I(A)-1\in\sigma_I$.
\end{enumerate}
\end{theorem}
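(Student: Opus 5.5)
This statement is quoted from \cite{MR669558, MR717569}, so the plan is to reconstruct the standard argument there rather than invoke anything new.

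\textbf{Necessity.} I would first check that both conditions define a subgroup containing the generators. Condition (1) defines the \emph{net group}
\[
G(\sigma)=\setdef{A\in\GL(n,R)}{A_{ij}\in\sigma_{ij}\ (i\neq j)}.
\]
It is closed under products because $\sigma_{ik}\sigma_{kj}\subseteq\sigma_{ij}$. For inverses I would use that $R$ is semilocal: modulo each maximal ideal, $A$ is block triangular after reordering the indices, so the adjugate argument goes through.

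For condition (2), fix $I$ and write $A=\begin{pmatrix}A_{II}&A_{IJ}\\ A_{JI}&A_{JJ}\end{pmatrix}$ with $J$ the complement of $I$. Then
\[
(AB)_{II}=A_{II}B_{II}+A_{IJ}B_{JI}.
\]
Every entry of $A_{IJ}B_{JI}$ lies in $\sum_{i\in I,\,j\notin I}\sigma_{ij}\sigma_{ji}=\sigma_I$, after using the net property to reduce longer products $\sigma_{ik}\sigma_{kj}\sigma_{jl}$ with $k,j$ on opposite sides. Hence
\[
\det_I(AB)\equiv\det_I(A)\det_I(B)\pmod{\sigma_I},
\]
because the $I$-blocks of matrices in $G(\sigma)$ have off-diagonal entries in the appropriate ideals. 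Thus $A\mapsto \det_I(A)\bmod\sigma_I$ is a homomorphism on $G(\sigma)$. Each elementary generator $E_{ij}(q_{ij})$ has every principal minor equal to $1$, so its image is trivial. This gives necessity.

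\textbf{Sufficiency.} This is the main work. Given $A$ satisfying (1) and (2), I would reduce it to the identity by elementary operations from $\Gamma(\sigma)$, via a Gauss decomposition.

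\emph{First column.} Since $R$ is Bézout and semilocal, I can use the stable range $1$ property of semilocal rings. Adding multiples of rows $j$ by elements of $\sigma_{1j}$ (operations $E_{1j}(\sigma_{1j})$), I arrange that $A_{11}$ becomes a unit. This needs care: the ideals in the first column must generate together with $A_{11}$ the unit ideal, and this is exactly where condition (2) for $I=\{1\}$ enters, giving $A_{11}\equiv1\pmod{\sigma_{\{1\}}}$. Then $E_{i1}(\sigma_{i1})$ operations clear the first column, since $A_{i1}A_{11}^{-1}\in\sigma_{i1}$.

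\emph{Induction.} I then induct on $n$, applying the same procedure to the lower-right block with the restricted net. Throughout I must track that conditions (2) are preserved for the complementary subsets, which follows from the homomorphism property established in the necessity step.

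\emph{Diagonal remainder.} At the end I am left with a diagonal matrix $D=\operatorname{diag}(d_1,\dots,d_n)$ satisfying
\[
\prod_{i\in I}d_i\equiv1\pmod{\sigma_I}\quad\text{for all }I.
\]
I would show such $D$ lies in $\Gamma(\sigma)$ using Whitehead-type identities. For $a\in\sigma_{ij}$ and $b\in\sigma_{ji}$ with $1+ab$ a unit, the product of elementary matrices in positions $ij,ji$ yields $\operatorname{diag}(1+ab,(1+ab)^{-1})$ in the $i,j$ coordinates. So diagonal entries can be moved by units congruent to $1$ modulo $\sigma_{ij}\sigma_{ji}$. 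Then:
\begin{enumerate}
\item I would decompose $d_1-1\in\sigma_{\{1\}}=\sum_j\sigma_{1j}\sigma_{j1}$ into such pieces, using the semilocal property to keep the intermediate $1+ab$ units.
\item I would transfer $d_1$ onto the other coordinates, and repeat the argument with larger $I$.
\end{enumerate}

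\textbf{Main obstacle.} I expect this last step to be the hardest: expressing the diagonal part as a product of these $2\times2$ torus elements while only the summed ideals $\sigma_I$, not the individual $\sigma_{ij}\sigma_{ji}$, control the congruences. If a direct decomposition fails, my first attempt would be to pass to localizations at the finitely many maximal ideals and argue there. In the local case each $\sigma_I$ is a single principal ideal equal to one of the $\sigma_{ij}\sigma_{ji}$, and the result would then be glued back by the Chinese remainder theorem.
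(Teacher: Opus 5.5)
The paper does not prove this statement; it imports it from Borevich--Vavilov \cite{MR669558, MR717569}, so your argument has to stand on its own. Necessity and the reduction to a diagonal matrix are right in substance, with two corrections.

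\begin{itemize}
\item The off-diagonal entries of $A_{IJ}B_{JI}$ lie only in $\sigma_{ik}$, not in $\sigma_I$. What is true is that every Leibniz term of $\det(A_{II}B_{II}+A_{IJ}B_{JI})$ using such an entry closes up along its permutation cycle to an element of $\sigma_{ij}\sigma_{ji}\subseteq\sigma_I$ with $j\notin I$.
\item The unimodularity $A_{11}R+\sum_j\sigma_{1j}A_{j1}=R$ needed for stable range $1$ comes from invertibility of $A$, not from $A_{11}\equiv1\pmod{\sigma_{\{1\}}}$, which does not imply it. Modulo each maximal ideal $\mathfrak m$, $A$ is block triangular for the preorder $\sigma_{ij}\not\subseteq\mathfrak m$, so the diagonal block containing the index $1$ is invertible.
\end{itemize}

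The genuine gap is the diagonal step. It is the whole content of the theorem, and you leave it as a plan. Write $h_{ij}(u)$ for the diagonal matrix with $u$ in position $i$, $u^{-1}$ in position $j$ and $1$ elsewhere, and put $\mathfrak a_{ij}=\sigma_{ij}\sigma_{ji}$.

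Clearing coordinates one at a time and then leaving them alone fails. Take $R=\mathbb Z_{(p)}$ with $\sigma_{12}=\sigma_{21}=\sigma_{13}=\sigma_{31}=pR$ and $\sigma_{23}=\sigma_{32}=p^2R$. Then $D=\operatorname{diag}(1,1+p^2,(1+p^2)^{-1})$ satisfies all the conditions and equals $h_{21}(1+p^2)\,h_{13}(1+p^2)$. But $1+p^2\notin1+\mathfrak a_{23}$, so the transfer from coordinate $2$ to $3$ must pass through the already cleared coordinate $1$. In other words, once a coordinate is cleared, what is left need not satisfy the conditions for the restricted net.

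Your fallback does not rescue this. A semilocal B\'ezout ring such as $\mathbb Z_{(2)}\cap\mathbb Z_{(3)}$ is not the product of its localizations, so local factorizations cannot simply be glued by the Chinese remainder theorem. That reduction is legitimate only when $R$ is a finite product of local rings, as for the finite rings $R/\mathfrak m$ where the paper uses the theorem. Even locally, the observation that each $\sigma_I$ is one of the $\mathfrak a_{ij}$ still has to be turned into a factorization, e.g.\ via a maximum spanning tree for the totally ordered $\mathfrak a_{ij}$.

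A global argument that closes the gap is vertex elimination. Prove by induction on $n$ the following, for any symmetric family of ideals $\mathfrak a_{ij}$ with $\sigma_I:=\sum_{i\in I,\,j\notin I}\mathfrak a_{ij}$: a diagonal $D$ with $\prod_{i\in I}d_i-1\in\sigma_I$ for all $I$ is a product of $h_{ij}(u)$, with $u$ a unit in $1+\mathfrak a_{ij}$. These $h_{ij}(u)$ lie in $\Gamma(\sigma)$ by your Whitehead identity, since by the B\'ezout property every element of $\sigma_{ij}\sigma_{ji}$ is a single product $ab$ with $a\in\sigma_{ij}$, $b\in\sigma_{ji}$.

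\begin{itemize}
\item \emph{Splitting units.} By semilocality, a unit $1+x+y$ with $x\in\mathfrak x$, $y\in\mathfrak y$ is a unit of $1+\mathfrak x$ times a unit of $1+\mathfrak y$. Indeed $(1+x)R+\mathfrak x\mathfrak y=R$, so stable range gives $z\in\mathfrak x\mathfrak y$ with $1+x+z$ a unit.
\item \emph{Clearing coordinate $n$.} Hence $d_n=\prod_{i<n}u_i$ with $u_i$ a unit in $1+\mathfrak a_{in}$, and multiplying by $\prod_{i<n}h_{in}(u_i)$ clears coordinate $n$.
\item \emph{Passing to $n-1$ indices.} B\'ezout rings are arithmetical, so the ideal lattice is distributive, and for $I\subseteq\{1,\dots,n-1\}$
\[
\sigma_I\cap\sigma_{I\cup\{n\}}=\sum_{i\in I,\ j\notin I\cup\{n\}}\bigl(\mathfrak a_{ij}+\mathfrak a_{in}\cap\mathfrak a_{jn}\bigr).
\]
So the remaining $(n-1)\times(n-1)$ matrix satisfies the conditions for the family $\mathfrak a_{ij}+\mathfrak a_{in}\cap\mathfrak a_{jn}$.
\item \emph{Lifting.} The inductive factorization lifts because $h_{ij}(vw)=h_{ij}(v)\,h_{in}(w)\,h_{nj}(w)$, with $v$ a unit in $1+\mathfrak a_{ij}$ and $w$ a unit in $1+\mathfrak a_{in}\cap\mathfrak a_{jn}$.
\end{itemize}

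In the example, eliminating the index $1$ produces exactly the route $2\to1\to3$.
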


In \cite{MR687835}, it was shown that it is sufficient to verify the determinant condition for at most $n$ subsets. Generalizations of this result to other types of Chevalley groups can be found in the survey \cite{MR687837}.

\section{Proof of Theorem~\ref{thm: Higher-rank}}\label{Sec: proof of Theorem A}
In this section, we prove Theorem~\ref{thm: Higher-rank} and also determine the associated $S$-arithmetic group $\Gamma(Q)$. The proof is based on a theorem of Tits together with a criterion of Venkataramana and its generalizations. As a consequence, the determination of $\Gamma(Q)$ is reduced to a finite congruence computation. This computation can be carried out explicitly, yielding a complete description of the resulting groups. 

The proof proceeds in four steps. First, we modify the parameter matrix by a sequence of elementary operations until its local valuations satisfy a suitable admissibility condition. Second, admissibility allows us to construct, via a system of difference constraints, a diagonal conjugating matrix that places the generators inside an appropriate ring of $S$-integers. Third, combining this construction with an idea of Venkataramana yields the $S$-arithmeticity of the resulting group. Finally, the congruence subgroup property reduces the determination of the group to a finite computation over a finite quotient, where the theory of net subgroups applies.

We also discuss $S$-arithmeticity for general higher-rank algebraic groups. Since the proof is essentially identical, we present the argument first for $\SL(n)$, where the ideas are particularly transparent. 
\subsection{Admissible reduction} 
As is often the case in group theory, understanding a group requires a suitable generating set. In this subsection, we modify the given generating set so that it satisfies additional multiplicative relations. We will work with general number fields.

We begin with some notation. Let $F$ be a number field, and let $\mathcal O_F$ be its ring of algebraic integers. Every nonzero fractional ideal $\mathfrak a$ admits a unique factorization
\[
\mathfrak a=\prod_{i=1}^l \mathfrak p_i^{k_i},
\]
where the $\mathfrak p_i$ are distinct nonzero prime ideals of $\mathcal O_F$ and $k_i\in \mathbb Z$. We denote by
\[
\Pi(\mathfrak a)=\{\mathfrak p_i\}_{i=1}^l
\]
the set of prime ideals appearing in this factorization.

For each nonzero prime ideal $\mathfrak p$, let
\[
v_{\mathfrak p}:I_F\longrightarrow\mathbb Z
\]
denote the corresponding valuation, where $I_F$ is the group of nonzero fractional ideals. Thus, $v_{\mathfrak p}(\mathfrak a)=k$
if and only if $\mathfrak p^k$ is the $\mathfrak p$-part of
$\mathfrak a$. For any finite set $S$ of nonzero prime ideals, define 
\[
\mathcal{O}_{F,S}=\setdef{x\in F} {v_{\mathfrak p}(x)\ge 0
\text{ for all nonzero prime ideals } \mathfrak{p}\notin S}.
\]

\begin{defn}
A \textbf{parameter matrix over $F$} is a collection
\[
\mathfrak{Q}
=(\mathfrak q_{ij})_{1\le i,j\le n,\ i\neq j}\]
of nonzero fractional ideals of $F$ indexed by ordered pairs $(i,j)$ with $i\neq j$.

For convenience, we regard $\mathfrak{Q}$ as an $n\times n$ matrix whose diagonal entries are left undefined.
\end{defn}
\begin{defn}
    Let $\mathfrak{Q}$ be a parameter matrix over $F$. The \textbf{group generated by elementary elements of type $\mathfrak{Q}$} is defined by 
    \[
    \Gamma(\mathfrak{Q}):=\langle E_{ij}(q_{ij}): q_{ij}\in \mathfrak{q}_{ij}\rangle.
    \]
\end{defn}

For any nonzero prime ideal $\mathfrak{p}$ and parameter matrix $\mathfrak{Q}=(\mathfrak{q}_{ij})$, we define
\[
v_{\mathfrak{p}}(\mathfrak{Q}) := (v_{\mathfrak{p}}(\mathfrak{q}_{ij}))_{i\neq j},
\]
the off-diagonal $\mathfrak{p}$-adic valuation matrix of $\mathfrak{Q}$.

The purpose of the following definition is to simplify the detection of negative cycles, which will be defined later in Section~\ref{sec: find N and g}. Without additional assumptions, a negative cycle may involve arbitrarily many vertices. Under the admissibility condition, however, every negative cycle contains a negative $2$-cycle. This reduction will be established in Lemma~\ref{lem: Pi_1} and is the key ingredient in the construction of the diagonal conjugating matrix.

Let $Z=(z_{ij})$ be an off-diagonal integral $n\times n$ matrix.  
We say that $Z$ is \textbf{admissible} if either $Z \le 0$, or
\[
z_{ij} \le z_{ik} + z_{kj} \quad \text{for all } i \ne j,\ k \ne i,j.
\]

The following elementary operation does not change the generated subgroup. It is the ideal-theoretic analogue of replacing an elementary generator by a commutator relation, and will be used repeatedly to simplify the parameter matrix while preserving the associated group.

\begin{defn}
Let $\mathfrak{Q}$ be an off-diagonal $n\times n$ parameter matrix over $F$. We write
$\mathfrak{Q} \rightsquigarrow \mathfrak{R}$
if $\mathfrak{R}$ is obtained from $\mathfrak{Q}$ by replacing a single entry $\mathfrak{q}_{ij}$
with $\mathfrak{q}_{ij}+\mathfrak{q}_{ik}\mathfrak{q}_{kj}$ for some $k\neq i,j$.
We denote by $\mathfrak{Q} \rightsquigarrow^*\mathfrak{R}$ the reflexive-transitive closure of this relation.
\end{defn}

The following result is a consequence of the definitions.

\begin{lemma}\label{lem: p one by one}
If $\mathfrak{Q_1} \rightsquigarrow \mathfrak{Q_2}$, then
$\Gamma(\mathfrak{Q_1})=\Gamma(\mathfrak{Q_2})$.
Furthermore, if $v_{\mathfrak{p}}(\mathfrak{Q_1})$ is admissible for some nonzero prime ideal $\mathfrak{p}$,
then so is $v_{\mathfrak{p}}(\mathfrak{Q_2})$.
\end{lemma}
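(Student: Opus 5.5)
For the group equality, I would prove the two inclusions $\Gamma(\mathfrak{Q_1})\subseteq\Gamma(\mathfrak{Q_2})$ and $\Gamma(\mathfrak{Q_2})\subseteq\Gamma(\mathfrak{Q_1})$ separately. The second is the substantive one, and it rests on the Steinberg commutator relation
\[
[E_{ik}(a),E_{kj}(b)]=E_{ij}(ab),\qquad i,j,k \text{ distinct}.
\]
Suppose $\mathfrak{Q_2}$ is obtained from $\mathfrak{Q_1}$ by replacing $\mathfrak{q}_{ij}$ with $\mathfrak{q}_{ij}+\mathfrak{q}_{ik}\mathfrak{q}_{kj}$. Every generator of $\Gamma(\mathfrak{Q_1})$ is a generator of $\Gamma(\mathfrak{Q_2})$, since all other entries are unchanged and $\mathfrak{q}_{ij}\subseteq \mathfrak{q}_{ij}+\mathfrak{q}_{ik}\mathfrak{q}_{kj}$. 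This gives the first inclusion.

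For the reverse inclusion, take any $x\in\mathfrak{q}_{ij}+\mathfrak{q}_{ik}\mathfrak{q}_{kj}$. By definition of the product and sum of ideals, we can write
\[
x=q+\sum_{l} a_l b_l,\qquad q\in\mathfrak{q}_{ij},\ a_l\in\mathfrak{q}_{ik},\ b_l\in\mathfrak{q}_{kj},
\]
with finitely many terms. Since $E_{ij}$ is additive in its argument, this gives
\[
E_{ij}(x)=E_{ij}(q)\prod_l [E_{ik}(a_l),E_{kj}(b_l)].
\]
The right-hand side is a product of generators of $\Gamma(\mathfrak{Q_1})$ and their inverses. Hence $\Gamma(\mathfrak{Q_2})\subseteq\Gamma(\mathfrak{Q_1})$, and the two groups are equal.

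For admissibility, put $Z=v_{\mathfrak p}(\mathfrak{Q_1})$ and $Z'=v_{\mathfrak p}(\mathfrak{Q_2})$. These agree except possibly at $(i,j)$, where
\[
z'_{ij}=\min\bigl(z_{ij},\,z_{ik}+z_{kj}\bigr).
\]
This formula holds because localizing at $\mathfrak p$ gives a DVR, and the valuation of a sum of ideals is the minimum of the valuations. There are two cases.

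\emph{Case $Z\le 0$.} Then $z'_{ij}\le z_{ij}\le 0$, so $Z'\le 0$ and $Z'$ is admissible.

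\emph{Case $Z$ satisfies the triangle inequalities.} Then $z_{ij}\le z_{ik}+z_{kj}$, so the minimum above equals $z_{ij}$. Thus $Z'=Z$, which is admissible.

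No step poses a real obstacle. The only points that need care are that the commutator identity holds exactly (true in $\SL(n)$ for distinct $i,j,k$) and that ideal valuations are handled locally at $\mathfrak p$.
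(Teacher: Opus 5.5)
Your proof is correct and is exactly the argument the paper leaves implicit when it calls the lemma a consequence of the definitions; the paper itself describes $\rightsquigarrow$ as the ideal-theoretic analogue of the commutator relation. The identity $[E_{ik}(a),E_{kj}(b)]=E_{ij}(ab)$ for distinct $i,j,k$ gives $\Gamma(\mathfrak{Q_1})=\Gamma(\mathfrak{Q_2})$, and $z'_{ij}=\min(z_{ij},z_{ik}+z_{kj})$ together with your two-case check gives preservation of admissibility, as intended.
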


\begin{proposition}\label{Pro: admissible}
For every parameter matrix $\mathfrak{Q}$, there exists a parameter matrix $\mathfrak{Q'}$ such that
\[
\mathfrak{Q} \rightsquigarrow^* \mathfrak{Q'},
\]
and $v_{\mathfrak{p}}(\mathfrak{Q'})$ is admissible for all nonzero prime ideals $\mathfrak{p}$.
\end{proposition}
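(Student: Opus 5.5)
The plan is to reduce everything to a min-plus computation carried out one prime at a time. The key observation is that for nonzero fractional ideals $v_{\mathfrak p}(\mathfrak a+\mathfrak b)=\min(v_{\mathfrak p}(\mathfrak a),v_{\mathfrak p}(\mathfrak b))$ and $v_{\mathfrak p}(\mathfrak a\mathfrak b)=v_{\mathfrak p}(\mathfrak a)+v_{\mathfrak p}(\mathfrak b)$. Hence a single move $\mathfrak q_{ij}\mapsto \mathfrak q_{ij}+\mathfrak q_{ik}\mathfrak q_{kj}$ acts on the valuation matrix at every prime simultaneously by the relaxation
\[
z_{ij}\mapsto \min(z_{ij},z_{ik}+z_{kj}),
\]
which is a step of a shortest-path algorithm on the complete digraph with weights $z_{ij}$.

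First I fix the finite set $P=\bigcup_{i\neq j}\Pi(\mathfrak q_{ij})$. For $\mathfrak p\notin P$ every entry of $v_{\mathfrak p}(\mathfrak Q)$ is $0$, so the matrix is $\le 0$ and hence admissible. A move only produces products and sums of the given ideals, so it creates no new primes with nonzero valuation, and the zero matrix at such primes stays zero. Moreover, Lemma~\ref{lem: p one by one} says that once $v_{\mathfrak p}$ is admissible it stays admissible under any further moves. It therefore suffices to treat the primes of $P$ one at a time: for a fixed $\mathfrak p\in P$, I find a finite sequence of moves making $v_{\mathfrak p}$ admissible, then pass to the next prime. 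Primes already handled remain admissible, and $\Gamma$ is unchanged throughout.

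For a fixed $\mathfrak p$, write $Z=v_{\mathfrak p}(\mathfrak Q)$ and split into two cases according to whether the weighted digraph has a negative cycle.

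\emph{No negative cycle.} Run Floyd--Warshall, that is, apply the relaxation for $k=1,\dots,n$ (outer loop) over all pairs $i\neq j$. This is a finite sequence of moves. In the absence of negative cycles it outputs the shortest-path distances, which satisfy $z_{ij}\le z_{ik}+z_{kj}$, so the result is admissible. (Relaxations never create a negative cycle, since every new weight is the weight of an existing path.)

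\emph{A negative cycle exists.} Here I aim for the other alternative, $Z\le 0$. First I shorten the cycle: if $i_1\to i_2\to\cdots\to i_m\to i_1$ has negative total weight with $m\ge3$, relaxing $z_{i_1i_3}$ through $i_2$ yields a negative cycle of length $m-1$. Repeating this gives a pair $i\neq j$ with $z_{ij}+z_{ji}<0$. Next I pump. For any $k\neq i,j$, alternately relax $z_{ik}$ via $j$ and $z_{jk}$ via $i$; each round lowers these entries by at least $|z_{ij}+z_{ji}|$, so finitely many rounds make them as negative as desired. The same applies to $z_{ki},z_{kj}$, and iterating on $z_{ij},z_{ji}$ themselves makes them $\le 0$. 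Finally, for $k,l\notin\{i,j\}$, relaxing $z_{kl}$ via $i$ gives $z_{kl}\le z_{ki}+z_{il}$. If $z_{ki}$ has been pushed below $-z_{il}$, this makes $z_{kl}\le0$; here $n\ge3$ is used only to have room for a third index. All entries are now $\le0$, so $Z$ is admissible.

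The main obstacle I expect is the negative-cycle case. One has to check that the pumping really terminates with every entry nonpositive, and in particular that entries already made $\le 0$ are never increased. This holds because moves only decrease valuations, so the order of relaxations causes no trouble. Combining the two cases over the finitely many primes of $P$ yields $\mathfrak Q\rightsquigarrow^*\mathfrak Q'$ with $v_{\mathfrak p}(\mathfrak Q')$ admissible for every $\mathfrak p$.
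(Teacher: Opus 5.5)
Your proof is correct. Its outer layer is exactly the paper's: only the finitely many primes with nonzero valuation matrix matter, they are treated one at a time, and Lemma~\ref{lem: p one by one} keeps earlier primes admissible.

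You differ from the paper in the single-prime step, the paper's Lemma~\ref{lem: admission for one q}. The paper makes no case distinction. It greedily relaxes any violated triangle inequality. A pigeonhole count then shows that if this has not stopped after $3n^2M$ moves (with $M=\max_{i\neq j}|z_{ij}|$), some entry is $\le -2M$. Since entries never exceed $M$, $O(n^2)$ further relaxations routed through that entry push every entry to $\le 0$.

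You instead decide in advance whether a negative cycle exists. If not, you run a Floyd--Warshall restricted to $k\neq i,j$; this is still correct, because relaxing through an endpoint is a no-op when $d_{ii}=0$. If so, you shrink to a negative $2$-cycle and pump.

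Your route makes visible which alternative of admissibility occurs: the triangle inequality holds exactly when there is no negative cycle. That is what Lemma~\ref{lem: Pi_1} and the difference-constraint step later exploit, and you also get an explicit $O(n^3)$ sequence of moves in the cycle-free case. The paper's route is uniform: it needs no prior detection of negative cycles, and the counting argument replaces your separate shortening and pumping analysis.

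A few details to tighten:
\begin{enumerate}
\item Take the negative cycle to be simple (a negative closed walk always contains one). Then $i_1,i_2,i_3$ are distinct and the shortening move is a legal move.
\item The entries $z_{ij}$ and $z_{ji}$ cannot be relaxed using $i,j$ alone. Make them $\le 0$ by routing through a third index $k$, after $z_{ik},z_{kj}$ and $z_{jk},z_{ki}$ have been pumped.
\item In the first round the relaxation of $z_{ik}$ need not drop it by $|z_{ij}+z_{ji}|$. From then on the bound does decrease linearly, so your conclusion stands.
\item For $n=2$ every matrix is vacuously admissible, so no moves are needed.
\end{enumerate}
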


We first treat a special case.

\begin{lemma}\label{lem: admission for one q}
For every parameter matrix $\mathfrak{Q}$ and every nonzero prime ideal $\mathfrak{p}$, there exists a parameter matrix $\mathfrak{Q'}$ such that
\[
\mathfrak{Q} \rightsquigarrow^* \mathfrak{Q'},
\]
and $v_{\mathfrak{p}}(\mathfrak{Q'})$ is admissible.
\end{lemma}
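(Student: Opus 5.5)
The plan is to track only the $\mathfrak p$-adic valuations and to read the operation $\rightsquigarrow$ as a shortest-path relaxation. Write $Z=(z_{ij})=v_{\mathfrak p}(\mathfrak Q)$. Since $v_{\mathfrak p}(\mathfrak a+\mathfrak b)=\min(v_{\mathfrak p}(\mathfrak a),v_{\mathfrak p}(\mathfrak b))$ for fractional ideals, a move $\mathfrak q_{ij}\mapsto \mathfrak q_{ij}+\mathfrak q_{ik}\mathfrak q_{kj}$ changes the $\mathfrak p$-valuation matrix by
\[
z_{ij}\mapsto \min(z_{ij},\,z_{ik}+z_{kj}),\qquad k\neq i,j,
\]
and leaves every other entry unchanged. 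So it suffices to show that such relaxations, applied finitely often, make $Z$ admissible. Regard $Z$ as the weight matrix of the complete directed graph on $\{1,\dots,n\}$; as in the setting of Theorem~\ref{thm: Higher-rank}, assume $n\ge3$. The argument splits according to whether this graph has a negative-weight cycle.

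\emph{Case 1: no negative cycle.} Run Floyd--Warshall: for $k=1,\dots,n$ in turn, and for all $i\neq j$ with $k\ne i,j$, relax $z_{ij}$ through $k$. Each step is a legal move. The standard correctness argument (no negative cycles, so shortest walks may be taken simple and the diagonal plays no role) shows that afterwards $z_{ij}=d(i,j)$ is the shortest-path distance. Distances satisfy $d(i,j)\le d(i,k)+d(k,j)$, so the final matrix satisfies the triangle inequality and is admissible.

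\emph{Case 2: a negative cycle $c_0\to c_1\to\cdots\to c_{r-1}\to c_0$ exists.} First shorten it. If $r\ge3$, relax $z_{c_0c_2}$ through $c_1$. The new value is at most $z_{c_0c_1}+z_{c_1c_2}$, so $c_0\to c_2\to\cdots\to c_0$ is a negative cycle of length $r-1$. Repeating this produces a negative $2$-cycle, that is, indices $a\ne b$ with $\delta:=z_{ab}+z_{ba}<0$.

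Next pump this $2$-cycle through a third vertex $k$. Relax $z_{ak}$ through $b$, giving $z_{ak}\le z_{ab}+z_{bk}$. Then relax $z_{bk}$ through $a$, giving $z_{bk}\le z_{ba}+z_{ak}\le z_{bk}+\delta$. Each round lowers $z_{bk}$ by at least $|\delta|\ge1$, and $z_{ak}$ follows it, so after finitely many rounds $z_{ak},z_{bk}$ are as negative as we like, for every $k\ne a,b$. The symmetric pumping handles $z_{ka},z_{kb}$. The entry $z_{ab}$ is then made $\le0$ via $z_{ab}\le z_{ak}+z_{kb}$, and likewise $z_{ba}$. Finally, for $i,j\notin\{a,b\}$, relax $z_{ij}\le z_{ia}+z_{aj}$. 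All these steps are finite and each is legal since the pivot differs from both endpoints. The result is $Z\le0$, which is admissible.

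The step I expect to need the most care is Case 2. There every relaxation must use a pivot distinct from both indices, and the pumping must visibly terminate; the explicit use of a third vertex (hence $n\ge3$) and the fixed decrement $\delta\le-1$ per round handle both. Case 1 is just the textbook correctness of Floyd--Warshall. Only $v_{\mathfrak p}$ was controlled here; valuations at other primes may change, which is harmless for this lemma. By Lemma~\ref{lem: p one by one} the generated group is unchanged throughout.
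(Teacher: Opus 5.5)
Your argument is correct, but it is organized differently from the paper's.

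\textbf{The paper's route.} The paper does not split according to whether a negative cycle exists. It simply relaxes violated triangles greedily and uses a pigeonhole count. If the process has not stopped after $3n^2M$ steps, with $M=\max|v_{\mathfrak p}(\mathfrak q_{ij})|$, then some entry has dropped to at most $-2M$. Since entries only decrease, every entry stays at most $M$. So this one very negative entry $z_{ij}$ can be propagated: first it makes row $i$ and column $j$ at most $-M$, and then every entry becomes non-positive.

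\textbf{Your route.} You settle the dichotomy structurally. With no negative cycle, you run the pivot-restricted Floyd--Warshall sweep and land on the shortest-path matrix, which satisfies the triangle inequality. With a negative cycle, you first shorten it to a negative $2$-cycle, which is essentially the mechanism behind Lemma~\ref{lem: Pi_1}. You then pump with the fixed gain $\delta\le -1$ per round.

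\textbf{What each buys.} The paper's version is a single blind procedure that needs no prior knowledge of which regime you are in, and it comes with an explicit step bound $3n^2M+O(n^2)$. Yours makes visible why the two possible outcomes, $Z\le 0$ versus a matrix satisfying the triangle inequality, correspond to the presence or absence of negative cycles. It also gives an $O(n^3)$ move count in the metric case. Finally, it names a legal pivot at every step, whereas the paper's last step (the ``remaining cases'') is only sketched.

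\textbf{Two small points to tidy.}
First, take the negative cycle to be simple; any negative closed walk contains a negative simple cycle. Then $c_0,c_1,c_2$ are distinct and the shortening move is legal.
Second, the lemma is stated for every $n$, not only $n\ge 3$. Add the one-line remark that for $n\le 2$ there is no $k\ne i,j$, so every valuation matrix is admissible vacuously.
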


\begin{proof}
Let us first assume $n\ge 3$.

If $v_{\mathfrak{p}}(\mathfrak{Q})$ is admissible, we are done by taking $\mathfrak{Q'}=\mathfrak{Q}$.

Otherwise, $v_{\mathfrak{p}}(\mathfrak{Q})$ is not admissible. 
Hence there exist pairwise distinct indices $i,j,k$ such that
\[
v_{\mathfrak{p}}(\mathfrak{q}_{ij}) > v_{\mathfrak{p}}(
\mathfrak{q}_{ik}) + v_{\mathfrak{p}}(
\mathfrak{q}_{kj}).
\]

We replace the entry $\mathfrak q_{ij}$ by
$\mathfrak q_{ij}+\mathfrak q_{ik}\mathfrak q_{kj}$, obtaining a new matrix $\mathfrak{Q_1}$ such that
\[
\mathfrak{Q} \rightsquigarrow \mathfrak{Q_1},
\quad
v_{\mathfrak{p}}(\mathfrak{Q_1})_{ij} < v_{\mathfrak{p}}(\mathfrak{Q})_{ij}.
\]

We repeat this process while $v_{\mathfrak{p}}(\mathfrak{Q})$ is not admissible.

At each step, at least one entry of the $\mathfrak{p}$-adic valuation matrix strictly decreases. Let
\[
M=\max_{i\neq j}|v_{\mathfrak{p}}(\mathfrak{q}_{ij})|.
\]
If the process has not terminated after $3n^2M$ steps, then some entry $v_{\mathfrak{p}}(\mathfrak{q}_{ij})$ must have decreased by more than $3M$. Since initially all entries lie in $[-M,M]$, it follows that in the resulting matrix $\mathfrak{Q}_2=(\mathfrak{q'_{ij}})$, 
\[
v_{\mathfrak{p}}(\mathfrak{q}'_{ij})\le -2M.
\]

Fix such a pair $(i,j)$. For any $k\neq i,j$, perform the replacement
through the path $i\to j\to k$. This replaces
$v_{\mathfrak p}(\mathfrak q'_{ik})$ by
\[
\min\{v_{\mathfrak{p}}(\mathfrak{q}'_{ik}),v_{\mathfrak{p}}(\mathfrak{q}'_{ij})+v_{\mathfrak{p}}(\mathfrak{q}'_{jk})\}.
\]
Since $v_{\mathfrak{p}}(\mathfrak{q}'_{ij})\le -2M$ and $v_{\mathfrak{p}}(\mathfrak{q}'_{jk})\le M$, the new value of $v_{\mathfrak{p}}(\mathfrak{q}'_{ik})$ is at most $-M$. Similarly, using the replacement through the path $k\to i\to j$, we can make
\[
v_{\mathfrak{p}}(\mathfrak{q}'_{kj})\le -M.
\]
Repeating this for all $k$ shows that every entry in the $i$-th row and the $j$-th column has $\mathfrak{p}$-adic valuation at most $-M$.

The remaining cases $k=j$ or $l=i$ are handled similarly by using the negative entries in the $i$-th row and $j$-th column and performing at most one further replacement through the path $i\to k\to l$ for some $k\neq i,j$. Thus, after finitely many additional replacements, every off-diagonal entry of the $\mathfrak{p}$-adic valuation matrix is non-positive. Hence
\[
v_{\mathfrak{p}}(\mathfrak Q')\le 0,
\]
and is therefore admissible.

Thus, after at most $3n^2M+O(n^2)$ steps, the process produces an admissible matrix.

Note that when $n=2$, every integral matrix is admissible. The result follows.
\end{proof}

\begin{proof}[Proof of Proposition~\ref{Pro: admissible}]
Let
\[
\Pi'_3(\mathfrak{Q})=\{\mathfrak{p} : v_{\mathfrak{p}}(\mathfrak Q)=0\}.
\]
Then there are only finitely many nonzero primes outside $\Pi'_3(\mathfrak{Q})$; enumerate them as $\mathfrak{p}_1,\dots,\mathfrak{p}_s$.

Apply Lemma~\ref{lem: admission for one q} successively to $\mathfrak{p}_1,\dots,\mathfrak{p}_s$. Each step preserves admissibility at the previously treated prime ideals by Lemma~\ref{lem: p one by one}. The result follows.
\end{proof}
\begin{defn}
     We call a parameter matrix $\mathfrak{Q}$ \textbf{$*$-admissible} if its \(\mathfrak p\)-adic valuation matrix is admissible for all nonzero prime ideals $\mathfrak{p}$.
\end{defn}

By Proposition~\ref{Pro: admissible}, we may assume from now on that all parameter matrices $\mathfrak{Q}$ are $*$-admissible.
\subsection{The number \texorpdfstring{$N$ and the matrix $g$}{N and the matrix g}}\label{sec: find N and g}

In this subsection, we define the number $N$ and the matrix $g$ in Theorem~\ref{thm: Higher-rank} under the assumption that $\mathcal{O}_F$ is a PID.

Let $\mathfrak{Q}=(\mathfrak{q}_{ij})$ be a parameter matrix, and let $\mathfrak{p}$ be a nonzero prime ideal. We say that $\mathfrak{p}$ admits a \textbf{negative cycle over $\mathfrak{Q}$} if there exists a cycle
\[
i_1 \to i_2 \to \cdots \to i_k \to i_{k+1}=i_1, \quad i_t\neq i_{t+1}
\]
such that
\[
\sum_{j=1}^k v_{\mathfrak{p}}(\mathfrak{q}_{i_ji_{j+1}})<0.\]
For a $*$-admissible parameter matrix $\mathfrak{Q}$, the prime ideals admitting a negative cycle are easy to characterize.
\begin{lemma}\label{lem: Pi_1}
  Let $\mathfrak Q=(\mathfrak q_{ij})$ be a $*$-admissible parameter
matrix. A nonzero prime ideal $\mathfrak p$ admits a negative cycle
over $\mathfrak Q$ if and only if there exist $i\neq j$ such that
  \[
        v_{\mathfrak{p}}(\mathfrak{q}_{ij})+v_{\mathfrak{p}}(\mathfrak{q}_{ji})<0.
  \]
\end{lemma}
\begin{proof}
   Let $\mathfrak{p}$ be a nonzero prime ideal that admits a negative cycle over $\mathfrak{Q}$. Let 
   \[
i_1 \to i_2 \to \cdots \to i_k \to i_{k+1}=i_1, \quad i_t\neq i_{t+1}
\]
be a negative cycle. 

If $v_\mathfrak{p}(\mathfrak{Q})\leq 0$, then at least one of the values $v_{\mathfrak{p}}(\mathfrak{q}_{i_ji_{j+1}})$, $1\le j\leq k$ is negative, say the value corresponding to $j=1$. Hence 
\[v_{\mathfrak{p}}(\mathfrak{q}_{i_1i_2})+v_{\mathfrak{p}}(\mathfrak{q}_{i_2i_1})<0.\]

Otherwise, since $\mathfrak{Q}$ is $*$-admissible, we have, for all pairwise distinct indices $i,j,k$,
\[
v_\mathfrak{p}(\mathfrak{q}_{ij})\leq v_\mathfrak{p}(\mathfrak{q}_{ik})+v_\mathfrak{p}(\mathfrak{q}_{kj}).
\]
Let $1=n_1<n_2<\cdots< n_l=k+1$ be the indices such that $i_{n_\alpha}=i_1$. Then by definition, one of the cycles from $i_{n_\alpha}$ to $i_{n_{\alpha+1}}$ is a negative cycle in which none of the intermediate vertices is $i_1$. Hence, without loss of generality, we may assume $i_j\neq i_1$ for all $2\le j\le k$.

Then, 
\[
v_{\mathfrak{p}}(\mathfrak{q}_{i_1i_k})\leq \sum_{j=1}^{k-1}v_{\mathfrak{p}}(\mathfrak{q}_{i_ji_{j+1}}).
\]
It follows that
\[
v_{\mathfrak{p}}(\mathfrak{q}_{i_1i_{k}})+v_{\mathfrak{p}}(\mathfrak{q}_{i_ki_1})<0.
\]

Conversely, if
\[
v_{\mathfrak{p}}(\mathfrak{q}_{ij})+v_{\mathfrak{p}}(\mathfrak{q}_{ji})<0,
\]
then the 2-cycle $i\to j\to i$ is a negative cycle.

This completes the proof.
\end{proof}

Let $\mathfrak Q$ be a $*$-admissible parameter matrix. Denote by $\Pi'_1(\mathfrak Q)$ the set of nonzero prime ideals admitting a negative cycle over $\mathfrak Q$, and let $\Pi_1(\mathfrak Q)$ be the union of $\Pi'_1(\mathfrak Q)$ and the set of Archimedean places of $F$.

Now we construct $g$ as stated in Theorem~\ref{thm: Higher-rank}. Theorem~\ref{thm: feasuble solution} will be used in this construction. We begin by translating the problem into a system of difference constraints.

\begin{lemma}\label{lemma: find g}
Let $\mathfrak{Q}$ and $\Pi'_1(\mathfrak{Q})$ be as defined above. For any nonzero prime ideal $\mathfrak{p}\notin \Pi'_1(\mathfrak{Q})$, there exists a diagonal matrix $g_\mathfrak{p}\in \GL(n,F)$ such that $v_\mathfrak{p}(g_\mathfrak{p}\mathfrak{Q}g_{\mathfrak{p}}^{-1})\geq 0$.

Furthermore, if $\mathcal{O}_F$ is a PID, there exists a diagonal matrix $g\in \GL(n,F)$ such that
\[
g\Gamma(\mathfrak{Q})g^{-1}\le \SL(n,\mathcal{O}_{F,\Pi_1(\mathfrak{Q})}).
\]
\end{lemma}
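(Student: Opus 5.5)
Conjugating by a diagonal matrix $g=\operatorname{diag}(g_1,\dots,g_n)$ sends $E_{ij}(q)$ to $E_{ij}(g_iqg_j^{-1})$. So for a fixed prime $\mathfrak p$, writing $x_i=v_{\mathfrak p}(g_i)$, the requirement $v_{\mathfrak p}(g\mathfrak Q g^{-1})\ge 0$ becomes the system of difference constraints
\[
x_j-x_i\le v_{\mathfrak p}(\mathfrak q_{ij}),\qquad i\neq j.
\]
The constraint graph of this system has an edge $i\to j$ of weight $v_{\mathfrak p}(\mathfrak q_{ij})$. A cycle of the graph has weight exactly the sum $\sum v_{\mathfrak p}(\mathfrak q_{i_ji_{j+1}})$ appearing in the definition of a negative cycle over $\mathfrak Q$. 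The augmented graph only adds a source vertex with weight-$0$ edges out of it, so it creates no new cycles.

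For $\mathfrak p\notin\Pi'_1(\mathfrak Q)$ there is therefore no negative cycle. Theorem~\ref{thm: feasuble solution} then gives an integral solution $(x_1,\dots,x_n)$. Fix any $\pi\in F$ with $v_{\mathfrak p}(\pi)=1$ and set $g_{\mathfrak p}=\operatorname{diag}(\pi^{x_1},\dots,\pi^{x_n})$; this proves the first assertion. Lemma~\ref{lem: Pi_1} is not needed here, since it only concerns recognizing $\Pi'_1$.

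For the global statement, assume $\mathcal O_F$ is a PID and choose generators $\pi_{\mathfrak p}$ of the primes. Only finitely many primes matter. Let $T$ be the finite set of primes $\mathfrak p$ with $v_{\mathfrak p}(\mathfrak q_{ij})\neq 0$ for some $i\ne j$. For a prime $\mathfrak p\notin T\cup\Pi'_1(\mathfrak Q)$ all valuations vanish, and $x=0$ works. For each $\mathfrak p\in T\setminus\Pi'_1(\mathfrak Q)$, take the integral solution $x^{(\mathfrak p)}$ from above and define
\[
g_i=\prod_{\mathfrak p\in T\setminus\Pi'_1}\pi_{\mathfrak p}^{x^{(\mathfrak p)}_i}.
\]
The generator $\pi_{\mathfrak p}$ has valuation $0$ at every other prime, so $v_{\mathfrak p}(g_i)=x^{(\mathfrak p)}_i$ for $\mathfrak p\in T\setminus\Pi'_1$, and $v_{\mathfrak p}(g_i)=0$ for all other $\mathfrak p\notin\Pi'_1$. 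Hence for every $q\in\mathfrak q_{ij}$ and every $\mathfrak p\notin\Pi'_1(\mathfrak Q)$ we get $v_{\mathfrak p}(g_iqg_j^{-1})\ge 0$, that is, $g_iqg_j^{-1}\in\mathcal O_{F,\Pi_1(\mathfrak Q)}$. Each conjugated generator $gE_{ij}(q)g^{-1}=E_{ij}(g_iqg_j^{-1})$ thus lies in $\SL(n,\mathcal O_{F,\Pi_1(\mathfrak Q)})$, and so does the subgroup $g\Gamma(\mathfrak Q)g^{-1}$ they generate.

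The main point needing care is that the local solutions can be glued independently. This works because the constraints decouple prime by prime, and the PID hypothesis supplies elements $\pi_{\mathfrak p}$ whose valuation is supported at a single prime. Without the PID hypothesis one would need to approximate ideal classes, which is why the statement is restricted to that case. The feasibility step itself is routine once Theorem~\ref{thm: feasuble solution} is invoked; integrality of its solution matters, because the exponents $x_i$ must be integers.
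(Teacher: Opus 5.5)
Your proposal is correct and follows the paper's proof. You rewrite $v_{\mathfrak p}(g\mathfrak Q g^{-1})\ge 0$ as the difference constraints $x_j-x_i\le v_{\mathfrak p}(\mathfrak q_{ij})$, use the absence of negative cycles with Theorem~\ref{thm: feasuble solution} to get an integral solution, and in the PID case multiply the local diagonal matrices built from generators of the finitely many primes in $T\setminus\Pi'_1(\mathfrak Q)$, which is exactly the paper's complement of $\Pi'_1\cup\Pi'_3$. Your explicit check that each $\pi_{\mathfrak p}$ has valuation zero away from $\mathfrak p$, so that the local solutions glue, spells out the step the paper leaves implicit.
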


\begin{proof}
Let $\mathfrak{p}\notin \Pi'_1(\mathfrak{Q})\cup \Pi'_3(\mathfrak{Q})$ be a nonzero prime ideal. If $v_{\mathfrak{p}}(\mathfrak{Q})\ge 0$, we set $g_{\mathfrak{p}}=I_n$.

Otherwise, $v_{\mathfrak{p}}(\mathfrak{Q})$ is admissible. Since $\mathfrak{p}\notin \Pi'_1(\mathfrak{Q})$, $\mathfrak{p}$ does not admit any negative cycles.Fix an element $p\in\mathcal O_F$ such that $v_\mathfrak{p}(p)=1$. We claim that for a suitable choice of
\[
g_\mathfrak{p}=\operatorname{diag}(p^{a_1},p^{a_2},\ldots,p^{a_n}),
\]
where $a_1,\ldots,a_n$ are integers, we have
\begin{equation}\label{eq: v_q requirements}
v_{\mathfrak{p}}(g_\mathfrak{p}\mathfrak{Q}g_\mathfrak{p}^{-1})\ge 0.
\end{equation}

Observe that equation~\eqref{eq: v_q requirements} is equivalent to requiring the following inequalities for all $i\neq j$:
\begin{equation}\label{eq: kill p}
a_i-a_j\le v_{\mathfrak{p}}(\mathfrak{q}_{ji}), \qquad a_j-a_i\le v_{\mathfrak{p}}(\mathfrak{q}_{ij}).
\end{equation}
This is a system of difference constraints. The associated graph is the complete directed graph $DG$ on $n$ vertices, whose edge weights are given by
\[
w(i,j)=v_{\mathfrak{p}}(\mathfrak{q}_{ij}).
\]

By Theorem~\ref{thm: feasuble solution}, in order to obtain an integral solution, it suffices to show that $DG$ contains no negative-weight cycles. This is equivalent to the condition $\mathfrak{p}\notin \Pi'_1(\mathfrak{Q})$. Therefore, the system admits an integral solution $(a_1,\ldots,a_n)$, and $g_\mathfrak{p}$ is defined accordingly.

If, furthermore, $\mathcal{O}_F$ is a PID, we choose $p$ to be a generator of the ideal $\mathfrak{p}$ in the construction above, and set
\[
g=\prod_{\mathfrak{p}\notin \Pi'_1(\mathfrak{Q})\cup\Pi'_3(\mathfrak{Q})} g_\mathfrak{p}.
\]
Then
\[
g\Gamma(\mathfrak{Q})g^{-1}=\Gamma(g\mathfrak{Q}g^{-1})\le \SL(n,\mathcal{O}_{F,\Pi_1(\mathfrak{Q})}).
\]
\end{proof}

\subsection{Proof of Theorem~\ref{thm: Higher-rank}}\label{sec: classification}

In this subsection, we prove Theorem~\ref{thm: high rank number fields}, which is a generalization of Theorem~\ref{thm: Higher-rank}. We first state the result.
\begin{theorem}\label{thm: high rank number fields}
    Let $n\geq 3$, $F$ be a number field, and $\mathcal{O}_{F}$ be the ring of algebraic integers in $F$. For every parameter matrix $\mathfrak{Q}$, $\Gamma(\mathfrak{Q})$ is $\Pi_1(\mathfrak{Q})$-arithmetic.

    Furthermore, if $\mathcal{O}_F$ is a PID, then there exists a diagonal matrix $g\in \GL(n,F)$ such that $g\Gamma(\mathfrak{Q})g^{-1}$ is a finite-index subgroup of $\SL(n,\mathcal{O}_{F,\Pi_1(\mathfrak{Q})})$.
\end{theorem}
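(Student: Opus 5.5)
By Proposition~\ref{Pro: admissible} and Lemma~\ref{lem: p one by one}, I will first replace $\mathfrak Q$ by a $*$-admissible parameter matrix generating the same group, so that $\Pi_1(\mathfrak Q)$ refers to this admissible representative. The plan is to show two facts. First, $\Gamma(\mathfrak Q)$ is contained, up to conjugation, in an $S$-arithmetic group with $S=\Pi_1(\mathfrak Q)$. Second, $\Gamma(\mathfrak Q)$ contains finite-index subgroups of $U^\pm(\mathcal O_{F,S})$ for the upper and lower unipotent radicals of the standard Borel subgroup. Theorem~\ref{thm: horospherical lattices} then gives $S$-arithmeticity, since $\operatorname{rank}_S(\SL_n)\ge n-1\ge 2$.

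For the containment, Lemma~\ref{lemma: find g} supplies, for each $\mathfrak p\notin\Pi'_1(\mathfrak Q)$, a diagonal $g_\mathfrak p$ with $v_\mathfrak p(g_\mathfrak p\mathfrak Q g_\mathfrak p^{-1})\ge 0$. In general I will use a single global diagonal $g\in\GL(n,F)$, not a product of local ones. By weak approximation, or simply because only finitely many primes lie outside $\Pi'_3(\mathfrak Q)$, choose entries $x_i\in F^\times$ with $v_\mathfrak p(x_i)=a_i^{(\mathfrak p)}$ at those finitely many primes. This makes $g\Gamma(\mathfrak Q)g^{-1}$ $\mathfrak p$-integral at all these primes, but it may spoil integrality at finitely many auxiliary primes. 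To avoid that, I will not insist on $\SL(n,\mathcal O_{F,S})$ itself. Instead I will show that $g\Gamma(\mathfrak Q)g^{-1}$ lies in the stabilizer of the $\mathcal O_{F,S}$-lattice $L=\bigoplus \mathfrak b_i e_i$, where the fractional ideals $\mathfrak b_i$ have local exponents $a_i^{(\mathfrak p)}$. That stabilizer is an $S$-arithmetic subgroup of $\SL_n(F)$, commensurable with $\SL(n,\mathcal O_{F,S})$. In the PID case the $\mathfrak b_i$ are principal, so $L$ is $g^{-1}$ times the standard lattice, and Lemma~\ref{lemma: find g} gives the stated containment directly.

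For the lower bound, each $\mathfrak q_{ij}$ is a nonzero fractional ideal, so $\Gamma(\mathfrak Q)\supseteq E_{ij}(\mathfrak q_{ij})$. After conjugating by $g$, the group $E_{ij}(\mathfrak q_{ij})$ becomes $E_{ij}(\mathfrak c_{ij})$ for a fractional ideal $\mathfrak c_{ij}$. This is an additive subgroup commensurable with $\mathcal O_F$, but not yet with $\mathcal O_{F,S}$. To obtain $S$-lattices I will use the negative cycles: for $\mathfrak p\in\Pi'_1$ there are $i\ne j$ with $v_\mathfrak p(\mathfrak q_{ij})+v_\mathfrak p(\mathfrak q_{ji})<0$. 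Inside $\langle E_{ij}(\mathfrak q_{ij}),E_{ji}(\mathfrak q_{ji})\rangle$ I can produce diagonal elements $h$ that act on the $(i,k)$ root spaces by scalars of negative $\mathfrak p$-valuation. For example, use the word $E_{ij}(x)E_{ji}(y)E_{ij}(x)$-type elements, or the $\SL_2$ identity $\operatorname{diag}(u,u^{-1})$ built from Steinberg relations, taking $xy\in\mathfrak q_{ij}\mathfrak q_{ji}$ with the required valuation to get $u$ close to a unit times a $\mathfrak p$-power.

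Conjugating $E_{kl}(\mathfrak q_{kl})$ by powers of such $h$ and summing via the commutator relations $[E_{ik}(a),E_{kj}(b)]=E_{ij}(ab)$ should show that the additive group $\{x: E_{kl}(x)\in g\Gamma g^{-1}\}$ is an $\mathcal O_{F,S}$-submodule of full rank for every root. This gives the finite-index unipotent lattices $\Lambda^\pm$. Producing the diagonal elements with controlled $\mathfrak p$-adic behaviour, using only elements available in $\Gamma$ and only at the primes of $\Pi'_1$, is the main obstacle. As a fallback, I would instead invoke the commensurator-type idea of Venkataramana \cite[Proposition~2.3]{MR4521495}. By Tits's theorem $\Gamma(\mathfrak Q)$ contains a finite-index subgroup of $\SL(n,\mathcal O_F)$, namely the group generated by the $E_{ij}(\mathfrak c)$ with $\mathfrak c$ an integral ideal inside every $\mathfrak q_{ij}$. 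A group containing an arithmetic lattice and contained in an $S$-arithmetic group is itself $S'$-arithmetic for the set $S'$ of places where it is unbounded. Lemma~\ref{lem: Pi_1} identifies $S'$ with $\Pi_1(\mathfrak Q)$: the group is unbounded at $\mathfrak p$ exactly when a negative 2-cycle gives elements of $\SL_2$-type of unbounded $\mathfrak p$-adic norm, and it is bounded otherwise by the $g_\mathfrak p$ construction.

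Finally, in the PID case the containment $g\Gamma g^{-1}\le\SL(n,\mathcal O_{F,\Pi_1})$ from Lemma~\ref{lemma: find g}, combined with commensurability, gives finite index.
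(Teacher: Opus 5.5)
Your fallback argument is essentially the paper's proof, and in that form the proposal is correct. Tits's theorem applied to $\mathfrak s_{ij}=\mathfrak q_{ij}\cap\mathcal O_F$ puts an arithmetic subgroup inside $\Gamma(\mathfrak Q)$. Lemma~\ref{lem: small lattice to big one} upgrades this to $S$-arithmeticity, with $S$ the set of unbounded places. Lemma~\ref{lem: unboundedness} identifies $S$ with $\Pi_1(\mathfrak Q)$. In the PID case, Lemma~\ref{lemma: find g} gives $g\Gamma(\mathfrak Q)g^{-1}\le\SL(n,\mathcal O_{F,\Pi_1(\mathfrak Q)})$, hence finite index. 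Your lattice-stabilizer construction for non-principal $\mathcal O_F$ is fine but not needed. Lemma~\ref{lem: small lattice to big one} only asks for boundedness at almost all places, and the local matrices $g_{\mathfrak p}$ already provide that.

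Your primary (horospherical) route, however, breaks at exactly the step you flagged, and not merely for lack of detail. The subgroup $\langle E_{ij}(\mathfrak q_{ij}),E_{ji}(\mathfrak q_{ji})\rangle$ is a rank-one group of the type $\Gamma_q$, and it need not contain any nontrivial diagonal element. For example, take $\mathbb Q$, $n=3$, $q_{12}=1$, $q_{21}=\tfrac{9}{2}$, and all other $q_{ij}=3$. This matrix is $*$-admissible, and $\Pi'_1=\{2\}$ comes only from the pair $(1,2)$. But $\langle E_{12}(1),E_{21}(\tfrac{9}{2})\rangle\cong\Gamma_{9/2}$ is free by \cite{MR75952}, so $U_{9/2}=H$ and this subgroup contains no diagonal element other than $I$.

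The route can be rescued without diagonal elements, using $n\ge3$. Let $\sigma_{kl}=\{x:E_{kl}(x)\in\Gamma(\mathfrak Q)\}$, an additive group, and let $\mathfrak a=\mathfrak q_{ij}\mathfrak q_{ji}$. The relations $[E_{ij}(x),E_{jk}(z)]=E_{ik}(xz)$ and $[E_{ji}(y),E_{ik}(w)]=E_{jk}(yw)$ give $\mathfrak a\sigma_{ik}\subseteq\sigma_{ik}$. Hence
$\sum_{m\ge0}\mathfrak a^m\mathfrak q_{ik}=\mathfrak q_{ik}\mathcal O_{F,S_{\mathfrak a}}\subseteq\sigma_{ik}$, where $S_{\mathfrak a}=\{\mathfrak p:v_{\mathfrak p}(\mathfrak a)<0\}$. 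Further commutators spread such lattices to every root group, and then prime by prime to $S=\Pi_1(\mathfrak Q)$; by Lemma~\ref{lem: Pi_1}, every $\mathfrak p\in\Pi'_1$ arises from a negative $2$-cycle. Theorem~\ref{thm: horospherical lattices} together with your lattice-stabilizer upper bound then finishes.

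This repaired route genuinely differs from the paper's. It dispenses with Lemma~\ref{lem: small lattice to big one} and the boundedness analysis, and exhibits the $S$-lattices explicitly by the ideal calculus behind Theorem~\ref{thm: classification}. The paper's argument is shorter and needs no explicit control of the root subgroups.
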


The main technical result is the following lemma. Under slightly stronger assumptions, it is due to Venkataramana; we use the version stated in \cite{MR4812042}.
\begin{lemma}\cite[Lemma 4.1]{MR4812042}\label{lem: small lattice to big one}
    Let $G$ be a connected, almost $k$-simple algebraic group defined
over a global field $k$. Let $T$ be a finite set of places of $k$ (containing all
Archimedean places if $\mathrm{char}(k)=0)$, and let $\Lambda$ be $T$-arithmetic.

Let $\Theta\le G(k)$ be a subgroup containing $\Lambda$ whose projection to $G(k_s)$ is bounded for almost all places $s$, and let $S$ be the (finite) set of places where the projection of $\Theta$ is unbounded. Then $\Theta$ is $S$-arithmetic.
\end{lemma}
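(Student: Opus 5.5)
The plan is to sandwich $\Theta$ between two groups that are both commensurable with $G(\mathcal O_{k,S})$. First, an upper bound coming from the boundedness hypothesis. Second, a lower bound obtained by an open-subgroup argument at the places of $S\setminus T$.

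\textbf{Step 1: reduction and normalisation.} Enlarge $S$ by the Archimedean places and by the places of $T$ where $G$ is isotropic. Commensurability classes do not change if we add or remove places $v$ where $G(k_v)$ is compact, so this is harmless. Since $\Lambda\subseteq\Theta$, and $\Lambda$ is unbounded at every place of $T$ where $G$ is isotropic, we get $T\subseteq S$ up to such compact places.

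Fix an $\mathcal O_{k,T}$-model of $G$ and set $K_v=G(\mathcal O_v)$ for $v\notin S$. For almost all $v$, $G$ is smooth and reductive over $\mathcal O_v$, so $K_v$ is a hyperspecial maximal compact subgroup. Moreover, $\Lambda$ is contained in $K_v$ up to finite index, and its closure is open in $K_v$.

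\textbf{Step 2: $\Theta$ lies in an $S$-arithmetic group.} For $v\notin S$ the projection of $\Theta$ is bounded, so its closure $C_v$ is a compact subgroup of $G(k_v)$ that contains the open subgroup $\overline{\Lambda}_v$.

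I would show that $C_v=K_v$ for almost all $v$. For almost all $v$, strong approximation (applied to the simply connected cover, with a finite-index correction) gives $\overline\Lambda_v=K_v$. Since $K_v$ is a maximal compact subgroup, $C_v\supseteq K_v$ then forces $C_v=K_v$.

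Consequently $K:=\prod_{v\notin S}C_v$ is a compact open subgroup of the finite adeles outside $S$. Then $\Theta\subseteq\Gamma_S:=G(k)\cap K$, and $\Gamma_S$ is commensurable with $G(\mathcal O_{k,S})$, hence $S$-arithmetic. The one point needing care here is the maximality of $K_v$ for almost all $v$.

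\textbf{Step 3: $\Theta$ has finite index in $\Gamma_S$.} This is the main step. Let $\pi:\Gamma_S\to G_{S\setminus T}:=\prod_{v\in S\setminus T}G(k_v)$ be the projection. Then $\Gamma_S\cap\pi^{-1}(W)$ is commensurable with $\Lambda$ for a suitable compact open $W\subseteq G_{S\setminus T}$. After shrinking $W$ and passing to a finite-index subgroup of $\Lambda$ (harmless for the final conclusion), we may assume
\[
\Lambda=\Gamma_S\cap\pi^{-1}(W).
\]
Let $L$ be the closure of $\pi(\Theta)$. It contains $W$, so it is open, and $L=\pi(\Theta)W$.

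Take $\gamma\in\Gamma_S$ with $\pi(\gamma)\in L$, and write $\pi(\gamma)=\pi(\theta)w$ with $\theta\in\Theta$ and $w\in W$. Then $\theta^{-1}\gamma\in\Gamma_S\cap\pi^{-1}(W)=\Lambda\subseteq\Theta$, so $\gamma\in\Theta$. Hence
\[
\Theta=\Gamma_S\cap\pi^{-1}(L).
\]
It therefore suffices to show that $L$ has finite index in $G_{S\setminus T}$.

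\textbf{Step 4: $L$ has finite index.} For each $v\in S\setminus T$, the projection $L_v$ of $L$ is an open subgroup of $G(k_v)$ that is unbounded by the definition of $S$. For $G$ almost simple and isotropic over $k_v$, I would invoke the Tits/Prasad result that an unbounded open subgroup contains $G(k_v)^+$, which has finite index (cocompact, and of finite index in characteristic $0$).

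To pass from the individual factors $L_v$ to the product $L$, I would use that $L$ contains the open subgroup $W$ together with commutators. The quickest route is to apply the same lemma to $L\cap G(k_v)$ directly: this group contains $W_v$, and it is unbounded because $\Theta$ is unbounded at $v$ while being bounded elsewhere modulo $W$. Since $G(k_v)^+$ is perfect and has no proper open finite-index subgroups, the product of these factors is contained in $L$, and $L$ has finite index in $G_{S\setminus T}$.

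I expect this last step, controlling the product rather than the individual factors and handling positive characteristic, to be the main obstacle. The first thing I would try there is the Howe–Moore / Tits simplicity argument, applied one factor at a time.
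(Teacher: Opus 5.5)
The overall architecture is the standard Venkataramana-type argument: trap $\Theta$ in $\Gamma_S=G(k)\cap K$ by boundedness, then show $\Theta$ has finite index in $\Gamma_S\cap\pi^{-1}(L)$ with $L$ open. (The paper gives no proof of its own here; it only quotes the lemma.) But Step~4, which carries all the content, is not proved, and your sketch of it fails.

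You assert that $L\cap G(k_v)$ is unbounded because $\Theta$ is ``bounded elsewhere modulo $W$''. In fact $\Theta$ is unbounded at \emph{every} place of $S\setminus T$. Openness together with unbounded projections gives nothing by itself; compare $\{(x,y)\in\mathbb Q_p^2: x-y\in\mathbb Z_p\}$. Perfectness of $G(k_v)^+$ is also not the relevant property.

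The missing idea is normality plus simplicity. Take $W=\prod_v W_v$ and put $N_v:=L\cap G(k_v)$.
\begin{enumerate}
\item[(i)] $N_v$ is normal in $L$, hence normalized by the projection $L_v$.
\item[(ii)] $L_v$ is open and unbounded, so the Tits--Prasad result you cite gives $L_v\supseteq G(k_v)^+$.
\item[(iii)] Since $N_v\supseteq W_v$, the group $N_v\cap G(k_v)^+$ is an infinite, hence non-central, normal subgroup of $G(k_v)^+$. Tits' simplicity theorem then gives $N_v\supseteq G(k_v)^+$.
\item[(iv)] Hence $L\supseteq W\prod_{v\in S\setminus T}G(k_v)^+$. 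This has finite index, since each $G(k_v)^+$ is normal and cocompact and $W$ is open.
\end{enumerate}

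This argument uses two things you assume silently: that $G$ is almost simple over each $k_v$ (``for $G$ almost simple and isotropic over $k_v$''), and that $G_T$ is non-compact. You also need the second for ``$\overline\Lambda_v$ open'' and for strong approximation. Neither follows from almost $k$-simplicity, and without them the lemma as quoted is false.
\begin{enumerate}
\item[(a)] Take $G=R_{k'/\mathbb Q}\SL_2$ with $k'=\mathbb Q(\sqrt2)$, and $7=\mathfrak p_1\mathfrak p_2$ with $\mathfrak p_1=(3+\sqrt2)$. Let $\Lambda=\SL_2(\mathbb Z[\sqrt2])$ and $\Theta=\SL_2(\mathbb Z[\sqrt2][\tfrac{1}{3+\sqrt2}])$. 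The unbounded places are $\infty$ and $7$. Yet $\Theta$ is bounded in the factor $\SL_2(k'_{\mathfrak p_2})$ of $G(\mathbb Q_7)$, so it has infinite index in $G(\mathbb Z[\tfrac17])$.
\item[(b)] Take $G=\SL_1(D)$ for a definite quaternion algebra $D$ split at $p$, with $\Lambda=1$ and $\Theta$ generated by an element of infinite order in $G(\mathbb Z[\tfrac1p])$. This is a counterexample as well.
\end{enumerate}
So you must assume $G$ absolutely almost simple with $G_T$ non-compact. Both hold in the paper's application.

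There are also two smaller slips.

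In Step~2, the claim $\overline\Lambda_v=K_v$ for almost all $v$ is false for non-simply-connected $G$. For $\Lambda=\mathrm{PGL}_2(\mathbb Z)$, $\overline\Lambda_p$ has index $2$ in $\mathrm{PGL}_2(\mathbb Z_p)$ whenever $p\equiv1\pmod 4$. But you only need $C_v\subseteq K_v$. This holds because $C_v$ fixes a point of the Bruhat--Tits building, while $\overline\Lambda_v$ contains the image of $\tilde G(\mathcal O_v)$ (with $\tilde G$ the simply connected cover), which fixes only the hyperspecial point $x_v$, and $K_v=\operatorname{Stab}(x_v)$.

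In Step~3, shrinking $W$ cannot in general force $\Gamma_S\cap\pi^{-1}(W)\subseteq\Lambda$. That is a congruence-subgroup question, and passing to a smaller $\Lambda$ only makes it harder. It is also unnecessary. With $\Lambda'=\Gamma_S\cap\pi^{-1}(W)$, your own computation gives $\Gamma_S\cap\pi^{-1}(L)=\Theta\Lambda'$. Since $\Theta\cap\Lambda'\supseteq\Lambda\cap\Lambda'$ has finite index in $\Lambda'$, $\Theta$ has finite index in $\Gamma_S\cap\pi^{-1}(L)$.
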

To apply the lemma, we first prove the following result.

 \begin{lemma}\label{lem: unboundedness}
    Let $\mathfrak Q=(\mathfrak q_{ij})$ be a $*$-admissible parameter matrix. Then $\mathfrak p\in\Pi'_1(\mathfrak Q)$ if and only if $\Gamma(\mathfrak Q)$ is unbounded in $\SL(n,F_{\mathfrak p})$.
\end{lemma}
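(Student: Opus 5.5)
We prove the two implications separately. In both directions we use Lemma~\ref{lem: Pi_1} to replace the existence of a negative cycle by the existence of a negative $2$-cycle. In one direction, a negative $2$-cycle gives unipotent elements whose conjugates blow up $\mathfrak p$-adically. In the other, Lemma~\ref{lemma: find g} gives a conjugation into an integral group.

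\emph{($\Leftarrow$) Not in $\Pi'_1$ implies bounded.} Suppose $\mathfrak p\notin\Pi'_1(\mathfrak Q)$. By the first part of Lemma~\ref{lemma: find g}, there is a diagonal matrix $g_{\mathfrak p}\in\GL(n,F)$ with $v_{\mathfrak p}(g_{\mathfrak p}\mathfrak Q g_{\mathfrak p}^{-1})\ge 0$. The first part of that lemma needs no PID assumption: the difference-constraint argument only uses a uniformizer $p$ with $v_{\mathfrak p}(p)=1$. The case $\mathfrak p\in\Pi_3'(\mathfrak Q)$ is trivial, since then $v_{\mathfrak p}(\mathfrak Q)=0$ and we take $g_{\mathfrak p}=I$. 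Conjugating a generator gives $g_{\mathfrak p}E_{ij}(q)g_{\mathfrak p}^{-1}=E_{ij}(p^{a_i-a_j}q)$. So every generator of $g_{\mathfrak p}\Gamma(\mathfrak Q)g_{\mathfrak p}^{-1}$ lies in $\SL(n,\mathcal O_{\mathfrak p})$, and hence so does the whole group. Since $\SL(n,\mathcal O_{\mathfrak p})$ is compact, $\Gamma(\mathfrak Q)$ lies in the compact set $g_{\mathfrak p}^{-1}\SL(n,\mathcal O_{\mathfrak p})g_{\mathfrak p}$. It is therefore bounded in $\SL(n,F_{\mathfrak p})$.

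\emph{($\Rightarrow$) In $\Pi'_1$ implies unbounded.} Suppose $\mathfrak p\in\Pi'_1(\mathfrak Q)$. By Lemma~\ref{lem: Pi_1}, there are indices $i\neq j$ with $v_{\mathfrak p}(\mathfrak q_{ij})+v_{\mathfrak p}(\mathfrak q_{ji})<0$. Choose $x\in\mathfrak q_{ij}$ and $y\in\mathfrak q_{ji}$ attaining these valuations, so that $v_{\mathfrak p}(xy)<0$. Work in the copy of $\SL(2)$ on the coordinates $\{i,j\}$, which contains $E_{ij}(x)$ and $E_{ji}(y)$. The element
\[
\gamma=E_{ij}(x)E_{ji}(y)=\begin{pmatrix}1+xy&x\\ y&1\end{pmatrix}
\]
has trace $2+xy$, and $v_{\mathfrak p}(2+xy)=v_{\mathfrak p}(xy)<0$. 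Its characteristic polynomial is $\lambda^2-(2+xy)\lambda+1$. By the Newton polygon, it has a root $\lambda$ with $|\lambda|_{\mathfrak p}=|2+xy|_{\mathfrak p}>1$. Then $|\lambda^m|_{\mathfrak p}\to\infty$, so the traces of the powers $\gamma^m$ are unbounded. Trace is continuous, so the set $\{\gamma^m\}$ cannot lie in any compact subset. Hence $\Gamma(\mathfrak Q)$ is unbounded in $\SL(n,F_{\mathfrak p})$.

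\emph{Main obstacle.} The delicate point is the bounded direction. Boundedness must hold for the whole generated group, not merely for its generators, and every generator $E_{ij}(q)$ with $q\in\mathfrak q_{ij}$ must be controlled simultaneously. This is handled by taking valuations of the ideals $\mathfrak q_{ij}$, which is exactly the inequality $v_{\mathfrak p}(g_{\mathfrak p}\mathfrak Q g_{\mathfrak p}^{-1})\ge 0$ that Lemma~\ref{lemma: find g} provides, together with compactness of $\SL(n,\mathcal O_{\mathfrak p})$.

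Finally, the unbounded direction uses $*$-admissibility only through Lemma~\ref{lem: Pi_1}, and this matters: without admissibility, a negative cycle of length greater than two would not directly give an element of large trace by the simple $2\times 2$ computation above.
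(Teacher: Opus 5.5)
Your proof is correct and follows the paper's argument. In the unbounded direction, the negative $2$-cycle from Lemma~\ref{lem: Pi_1} yields $E_{ij}(x)E_{ji}(y)$ with trace of negative $\mathfrak p$-adic valuation; in the bounded direction, Lemma~\ref{lemma: find g} conjugates $\Gamma(\mathfrak Q)$ into $\SL(n,\mathcal O_{\mathfrak p})$. Your Newton-polygon step spells out why a single element of large trace forces unboundedness, which the paper leaves implicit.
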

    \begin{proof}
    Let $\mathfrak{p}\in \Pi'_1(\mathfrak Q)$. By Lemma~\ref{lem: Pi_1}, there exist $i\neq j$ such that 
    \[
    v_{\mathfrak{p}}(\mathfrak{q}_{ij})+v_{\mathfrak{p}}(\mathfrak{q}_{ji})<0.
    \]
   For suitable $q_{ij}\in\mathfrak q_{ij}$ and $q_{ji}\in\mathfrak q_{ji}$, the matrix $E_{ij}(q_{ij})E_{ji}(q_{ji})$ has trace $n+q_{ij}q_{ji}$ with $v_{\mathfrak p}(n+q_{ij}q_{ji})<0$. Hence $\Gamma(\mathfrak Q)$ is unbounded in $\SL(n,F_{\mathfrak p})$.

    Let $\mathfrak{p}\notin \Pi'_1(\mathfrak Q)$. By Lemma~\ref{lemma: find g}, there is $g_\mathfrak{p}\in \GL(n,F)$ such that $v_\mathfrak{p}(g_\mathfrak{p}\mathfrak{Q}g_{\mathfrak{p}}^{-1})\geq 0$. It follows that $\Gamma(g_\mathfrak{p}\mathfrak{Q}g_{\mathfrak{p}}^{-1})$ is bounded in $\SL(n,F_\mathfrak{p})$. Hence $\Gamma(\mathfrak{Q})$ is bounded in $\SL(n,F_\mathfrak{p})$.
    \end{proof}

\begin{proof}[Proof of Theorem~\ref{thm: high rank number fields}]
By Proposition~\ref{Pro: admissible}, we may assume that $\mathfrak{Q}$ is admissible. Note that if $\mathfrak{Q}\rightsquigarrow \mathfrak{Q'}$, then $\Pi_1(\mathfrak{Q})=\Pi_1(\mathfrak{Q'})$. Hence, this reduction does not change the set $\Pi_1(\mathfrak{Q})$.

Let $\mathfrak S=(\mathfrak s_{ij})$ be the parameter matrix defined by
\[
\mathfrak s_{ij}=\mathfrak q_{ij}\cap\mathcal O_F
\qquad\text{for all }i\neq j.
\] 
We have
$\Gamma(\mathfrak{S})
\le
\Gamma(\mathfrak{Q})$.
By a theorem of Tits \cite{MR424966},
$\Gamma(\mathfrak{S})$
has finite index in
$\SL(n,\mathcal{O}_F)$.
In particular, it is an arithmetic lattice in the product of the groups $\SL(n,F_v)$ over all Archimedean places $v$. Hence, $\Gamma(\mathfrak{Q})$ is unbounded over all Archimedean places.

We now apply Lemma~\ref{lem: small lattice to big one} with
\[
G=\SL(n), \qquad
k=F, \quad
\Lambda=\Gamma(\mathfrak{S}).
\]
The hypotheses of Lemma~\ref{lem: small lattice to big one} are satisfied with
\(\Theta=\Gamma(\mathfrak{Q})\).
By Lemma~\ref{lem: unboundedness}, $\Gamma(\mathfrak{Q})$ is $\Pi_1(\mathfrak{Q})$-arithmetic.

Now, assume that $\mathcal{O}_F$ is a PID. Lemma~\ref{lemma: find g}
provides a diagonal matrix $g\in \GL(n,F)$ such that $\mathfrak{Q_1}:=g\mathfrak{Q}g^{-1}\in M(n,\mathcal{O}_{F,\Pi_1(\mathfrak{Q})})$. Note that $\mathfrak{Q_1}$ is admissible and $\Pi_1(\mathfrak{Q})=\Pi_1(\mathfrak{Q_1})$.

Since $\Gamma(\mathfrak{Q_1})\le \SL(n,\mathcal{O}_{F,\Pi_1(\mathfrak{Q})})$ and is $\Pi_1(\mathfrak{Q})$-arithmetic,
$\Gamma(\mathfrak{Q_1})$ is a finite-index subgroup of $\SL(n,\mathcal{O}_{F,\Pi_1(\mathfrak{Q})})$.
\end{proof}

The precise determination of the associated $S$-arithmetic group will be carried out in the next subsection.

\subsection{The group structure}
The main goal of this subsection is to prove the following classification theorem. The result extends Theorem~\ref{thm: high rank number fields}. By the congruence subgroup property, the classification reduces to a computation in a finite group. This type of problem has been extensively studied; for example, the results in \cites{MR669558, MR687835} are sufficient for us.

The explicit congruence description requires the existence of $g\in\GL(n,F)$ as in Lemma~\ref{lemma: find g}. Accordingly, we only state it for PIDs.

\begin{thm}\label{thm: classification}
Let $n\ge3$, $F$ be a number field, and $\mathcal{O}_F$ be the ring of algebraic integers in $F$. Assume that $F$ is not totally imaginary and $\mathcal{O}_F$ is a PID. For every $*$-admissible parameter matrix $\mathfrak Q$, there exist an explicitly computable matrix $g\in\GL(n,F)$ and explicitly computable pairs $(f_\alpha,m_\alpha)$, where each $f_\alpha$ is either an off-diagonal coordinate function or the determinant of a principal minor minus $1$, such that
\[
g\Gamma(\mathfrak Q)g^{-1}
=
\{
A\in \SL(n,\mathcal{O}_{F,\Pi_1(\mathfrak{Q})})
:
f_\alpha(A)\in m_\alpha\mathcal{O}_{F,\Pi_1(\mathfrak{Q})}
\text{ for all }\alpha
\}.
\]
\end{thm}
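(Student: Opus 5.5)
Take $g$ from Lemma~\ref{lemma: find g} and put $\mathfrak Q_1=g\mathfrak Qg^{-1}$. Then $\mathfrak Q_1$ is $*$-admissible, $\Pi_1(\mathfrak Q_1)=\Pi_1(\mathfrak Q)=:S$, and every entry of $\mathfrak Q_1$ is an ideal of $R:=\mathcal O_{F,S}$. Since $R$ is a localization of a PID, it is a PID. The first step is to close $\mathfrak Q_1$ into a net of ideals. Set $\sigma_{ii}=R$ and let $\sigma_{ij}$ be the sum, over all paths $i=i_0\to i_1\to\cdots\to i_k=j$ with distinct consecutive indices, of the products $\mathfrak q_{i_0i_1}\cdots\mathfrak q_{i_{k-1}i_k}R$. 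Because $R$ is Dedekind, each $\sigma_{ij}$ is determined by its valuations at $\mathfrak p\notin S$. For such $\mathfrak p$ there is no negative cycle, so $v_\mathfrak p(\sigma_{ij})$ is the shortest-path distance, a finite minimum. Hence $\sigma_{ij}$ is a nonzero ideal, obtained from finitely many paths; by admissibility one may even take paths of length $\le 2$. By construction $\sigma_{ik}\sigma_{kj}\subseteq\sigma_{ij}$. Iterating the commutator identity $[E_{ik}(a),E_{kj}(b)]=E_{ij}(ab)$, which is exactly the operation $\rightsquigarrow$ of Lemma~\ref{lem: p one by one}, shows $\Gamma(\mathfrak Q_1)=\Gamma(\sigma)$.

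The second step is to pass to a finite quotient. By Theorem~\ref{thm: high rank number fields}, $\Gamma(\sigma)$ has finite index in $\SL(n,R)$. As $F$ is not totally imaginary and $n\ge3$, Theorem~\ref{Thm: CSP}(1), in its Bass--Milnor--Serre $S$-arithmetic form, says $\Gamma(\sigma)$ contains a principal congruence subgroup $\Gamma(n,I)$. I would make $I$ explicit as $I=m R$ with $m$ a generator of $\prod_{i\ne j}\sigma_{ij}$, or of $\bigcap_{i\neq j}\sigma_{ij}$. The justification is that the level-$I$ elementary subgroup is contained in $\Gamma(\sigma)$, and its normal closure, which by CSP and relative $K$-theory equals $\Gamma(n,I)$ since $SK_1(R,I)$ is trivial here, is also contained in $\Gamma(\sigma)$. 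That normal closure still sits inside $\Gamma(\sigma)$ because conjugating level-$I$ elementaries by generators of $\Gamma(\sigma)$ stays inside $\Gamma(\sigma)$.

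The third step reduces to the finite ring $\bar R=R/I$. This ring is semilocal and a principal ideal ring, so it is Bézout. Let $\bar\sigma$ be the image of $\sigma$ in $\bar R$. Then $A\in\SL(n,R)$ lies in $\Gamma(\sigma)$ iff its reduction lies in the image of $\Gamma(\sigma)$, which is $\Gamma(\bar\sigma)$ because $\Gamma(\sigma)$ contains the kernel $\Gamma(n,I)$. Theorem~\ref{thm: congruence conditions} then describes $\Gamma(\bar\sigma)$ by two kinds of conditions: $\bar A_{ij}\in\bar\sigma_{ij}$, and $\det_J(\bar A)-1\in\bar\sigma_J$. Lifting to $R$, and using $I\subseteq\sigma_{ij}$ and $I\subseteq\sigma_J$, these become $A_{ij}\in\sigma_{ij}$ and $\det_J(A)-1\in\sigma_J$. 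Each of these ideals is principal, generated by some $m_\alpha$, which gives the pairs $(f_\alpha,m_\alpha)$. One more point needs care: Theorem~\ref{thm: congruence conditions} concerns the group generated inside $\GL$, while we work in $\SL$. The $J=\{1,\dots,n\}$ condition is automatic, so this causes no problem, and the generators all lie in $\SL$.

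I expect the main obstacle to be the second step, namely showing that the explicit level $I$ actually satisfies $\Gamma(n,I)\subseteq\Gamma(\sigma)$. CSP only gives some unspecified level. To make the description effective I would go through the fact that for $n\ge3$ the normal subgroup $E(n,R,I)$ equals $\Gamma(n,I)$ in this arithmetic setting. I would also need to check that $E(n,R,I)\subseteq\Gamma(\sigma)$, using Suslin/Vaserstein-type generators $E_{ji}(x)E_{ij}(a)E_{ji}(-x)$ with $a\in I$. Failing that, I would settle for explicit computability via a finite search over levels, which CSP guarantees will terminate.
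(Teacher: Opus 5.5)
Your skeleton is the paper's: close $g\mathfrak Qg^{-1}$ to a net $\sigma$ over $R=\mathcal O_{F,\Pi_1(\mathfrak Q)}$, get a level from the congruence subgroup property, reduce to the finite semilocal B\'ezout ring $R/I$, apply Theorem~\ref{thm: congruence conditions}, and lift. You diverge only at the step you call the main obstacle. That step is unnecessary, and as written it is incorrectly justified.

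\emph{Why it is unnecessary.} The final conditions $A_{ij}\in\sigma_{ij}$ and $\operatorname{det}_J(A)-1\in\sigma_J$ do not involve the level. So explicit computability of $g$ and of the pairs $(f_\alpha,m_\alpha)$ never needs an explicit $I$. The paper takes the unspecified level $\mathfrak m$ given by Theorem~\ref{Thm: CSP} and replaces it by $\mathfrak m\prod_{\mathfrak p\in\Pi_2}\mathfrak p^{2a_{\mathfrak p}}$; your $\mathfrak m\prod_{i\neq j}\sigma_{ij}$ would work equally well. A smaller ideal is still a level. After shrinking, $\mathfrak m\subseteq\sigma_{ij}$ and $\mathfrak m\subseteq\sigma_J$ hold by construction, which is exactly what your lifting step uses.

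\emph{Why the justification fails.} $E(n,R,I)$ is the normal closure of the level-$I$ elementaries in $\SL(n,R)$, so you must conjugate by $E_{ji}(x)$ with arbitrary $x\in R$. Conjugating only by generators of $\Gamma(\sigma)$ gives the normal closure inside $\Gamma(\sigma)$, which says nothing about $\Gamma(n,I)$.

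For $I=\bigcap_{i\neq j}\sigma_{ij}$ the claim is actually false. In the paper's example with all entries $s/t$ and $|s|\ge 2$, one has $\sigma_{ij}=sR$. The matrix $E_{21}(1)E_{12}(s)E_{21}(-1)$ lies in $\Gamma(n,sR)$ but has $(1,1)$-entry $1-s$, violating $\operatorname{det}_{\{1\}}(A)-1\in\sigma_{\{1\}}=s^2R$.

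For $I=\prod_{i\neq j}\sigma_{ij}$, your closing remark can be carried out. Take $a\in I$, $x\in R$ and $k\notin\{i,j\}$. Since $R$ is a PID and $I\subseteq\sigma_{ik}\sigma_{jk}\sigma_{kj}\sigma_{ki}$, write $a=bc$ with $b\in\sigma_{ik}\sigma_{jk}$ and $c\in\sigma_{kj}\sigma_{ki}$. Then
\[
E_{ji}(x)E_{ij}(a)E_{ji}(-x)=\bigl[E_{ik}(b)E_{jk}(xb),\,E_{kj}(c)E_{ki}(-cx)\bigr]\in\Gamma(\sigma).
\]
Vaserstein--Suslin generation of $E(n,R,I)$, together with $\SL(n,R,I)=E(n,R,I)$ (Bass--Milnor--Serre, $F$ not totally imaginary), then gives $\Gamma(n,I)\subseteq\Gamma(\sigma)$. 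This produces an explicit level, which the paper neither needs nor claims, at the cost of relative $K$-theory.

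\emph{A smaller point on Step 1.} Your claim $\Gamma(g\mathfrak Qg^{-1})=\Gamma(\sigma)$ needs more than the commutator identity. Commutators only produce products of the $\mathcal O_F$-ideals $\mathfrak q_{uv}$, whereas $\sigma_{ij}$ is an $R$-ideal with unbounded denominators at the primes of $\Pi'_1$. Those denominators come from walking around negative cycles using a third index, or can be deduced afterwards once a level is known. The paper sidesteps this by working with $g\Gamma(\mathfrak Q)g^{-1}$ directly. It uses only that the image modulo $\mathfrak m$ is $\Gamma(\overline{\sigma})$, which holds because $\mathcal O_F\to R/\mathfrak m$ is surjective.
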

\begin{proof}
  Let $\mathfrak Q$ be a $*$-admissible parameter matrix. By Theorem~\ref{thm: high rank number fields}, there exists $g\in\GL(n,F)$ such that $g\Gamma(\mathfrak Q)g^{-1}$ is a finite-index subgroup of $\SL(n,\mathcal O_{F,\Pi_1(\mathfrak Q)})$. By Theorem~\ref{Thm: CSP}, this ambient group has the congruence subgroup property. Hence there exists an ideal $\mathfrak m$ such that
\[
\Gamma(n,\mathfrak m)\leq g\Gamma(\mathfrak Q)g^{-1}.
\]

Since $\mathcal{O}_F$ is a PID, we may identify $\mathfrak{Q}$ with an off-diagonal matrix $Q=(q_{ij})$ in which $q_{ij}$ is a generator of the fractional ideal $\mathfrak{q}_{ij}$. 
The prime ideals occurring in the construction play three different roles. The primes in $\Pi'_1(Q)$ determine the ambient ring of $S$-integers, the primes in $\Pi_2(Q)$ contribute only to the congruence conditions, while the remaining primes play no further role. Accordingly, we partition the nonzero prime ideals into the following three classes:
\begin{itemize}
    \item $\Pi'_1(Q)$.
    \item $\Pi_2(Q):=\setdef{\mathfrak{p}}{v_\mathfrak{p}(gQg^{-1})_{ij}>0\ \text{for some $i\neq j$}}$.
    \item $\Pi_3(Q):=\setdef{\mathfrak{p}}{v_\mathfrak{p}(gQg^{-1})=0}$
\end{itemize}
For $\mathfrak{p}\in \Pi_2(Q)$, let \[a_\mathfrak{p}=\max_{i\neq j}\{v_\mathfrak{p}(gQg^{-1})_{ij}\}.\]
We replace $\mathfrak{m}$ with 
\[
\mathfrak{m}\prod_{\mathfrak{p}\in \Pi_2(Q)}\mathfrak{p}^{2a_\mathfrak{p}}.\]
Replacing $\mathfrak m$ by this smaller ideal ensures that reduction
modulo $\mathfrak m$ retains all divisibility information contributed
by the primes in $\Pi_2(Q)$. In particular, the congruence conditions
determined by the conjugated parameter matrix remain visible in the
finite quotient.

Let
\[
R=\mathcal O_{F,\Pi_1(\mathfrak Q)}.
\]
Let $\sigma=(\sigma_{ij})$ be the net of ideals in $R$ induced by the conjugated parameter matrix, with
\[
\sigma_{ii}=R.
\]
For each subset $I\subseteq\{1,\ldots,n\}$, define
\[
\sigma_I
=
\sum_{i\in I,\;j\notin I}\sigma_{ij}\sigma_{ji}.
\]

By our choice if $\mathfrak m$,
\[
\mathfrak m\subseteq\sigma_{ij}
\qquad\text{and}\qquad
\mathfrak m\subseteq\sigma_I
\]
for all $i,j$ and all subsets $I\subseteq\{1,\ldots,n\}$.

Let
\[
\overline R=R/\mathfrak m
\]
and define a net of ideals
\[
\overline{\sigma}
=
(\overline{\sigma}_{ij})
\]
in $\overline R$ by
\[
\overline{\sigma}_{ij}
=
\sigma_{ij}/\mathfrak m,
\qquad
1\leq i,j\leq n.
\]
For each subset $I\subseteq\{1,\ldots,n\}$, we then have
\[
\overline{\sigma}_I
=
\sigma_I/\mathfrak m.
\]

Since
\[
\Gamma(n,\mathfrak m)
\leq
g\Gamma(\mathfrak Q)g^{-1},
\]
the subgroup $g\Gamma(\mathfrak Q)g^{-1}$ is the full preimage of its
image under the reduction map
\[
p_{\mathfrak m}:\SL(n,R)\longrightarrow \SL(n,\overline R).
\]
Moreover,
\[
p_{\mathfrak m}\bigl(g\Gamma(\mathfrak Q)g^{-1}\bigr)=\Gamma(\overline{\sigma}).
\]

Because $\overline R$ is a commutative semilocal B\'ezout ring with identity, Theorem~\ref{thm: congruence conditions} implies that this image is characterized by the conditions
\[
\overline A_{ij}\in\overline{\sigma}_{ij}\qquad (i\neq j)
\]
and
\[
\operatorname{det}_I(\overline A)-1\in \overline{\sigma}_I
\]
for every subset $I\subseteq\{1,\ldots,n\}$.

The correspondence between ideals of $R$ containing $\mathfrak m$ and ideals of $R/\mathfrak m$ shows that these conditions lift exactly to
\[
A_{ij}\in\sigma_{ij}
\]
and
\[
\operatorname{det}_I(A)-1\in\sigma_I.
\]
Therefore, these finitely many congruence conditions determine $g\Gamma(\mathfrak Q)g^{-1}$.

This completes the proof.
\end{proof}

The construction in the proof of Theorem~\ref{thm: classification} is efficient. Let us clarify this by establishing an algorithm for a rational parameter matrix.

Let $Q$ be a rational parameter matrix of size $n\geq 3$.
\begin{enumerate}
    \item By Proposition~\ref{Pro: admissible}, replace $Q$ with a $*$-admissible $Q_1$. Let $\Pi'_1$ denote the set of primes admitting negative cycles over $Q_1$, and define
    \[N=\prod_{p\in\Pi'_1} p.\]
    \item Lemma~\ref{lemma: find g} provides a matrix $g\in \GL(n,\mathbb{Q})$ such that $v_{p}(gQ_1g^{-1})\ge 0$ for every $p\notin \Pi_1$.  Let $\Pi_2$ be the set of primes for which $v_{p}(gQ_1g^{-1})\neq 0$. Define a matrix $Q_2=(q'_{ij})$ by 
    \[
    q'_{ij}=\prod_{p\in\Pi_2} p^{v_p((gQ_1g^{-1})_{ij})}
    \]
    \item For every proper nonempty subset $I\subset\{1,2,\cdots,n\}$, define 
    \[q'_I=\gcd\{q'_{ij}q'_{ji}:i\in I, j\notin I\}.\]
\end{enumerate}
Then  \[
g\Gamma(Q)g^{-1}=\setdef{A\in \SL\left(n,\mathbb{Z}\left[\tfrac{1}{N}\right]\right)}{q'_{ij}\mid A_{ij}, \quad \text{$q'_I\mid \operatorname{det}_I(A)-1$}}.
\]

\begin{example}
    Let $Q$ be the $n\times n$ matrix whose off-diagonal entries are all equal to $\tfrac{s}{t}$. If $n\ge 3$, then 
    \[
    \Gamma(Q)=\setdef{A\in \SL\left(n,\mathbb{Z}\left[\tfrac{1}{t}\right]\right)}{s\mid A_{ij}, \quad \text{$s^2\mid A_{ii}-1$}}.
    \]
   This may be viewed as a generalization of a result of Mennicke \cite{MR1754994}, but follows from a completely different argument.
\end{example}

\subsection{General higher-rank groups}\label{Sec: general case}

The proof of Theorems~\ref{thm: high rank number fields} and~\ref{thm: classification} uses four main ingredients: Tits' theorem, the diagonal conjugation argument, the congruence subgroup property, and the structure of net subgroups. Consequently, the same strategy can be applied to other higher-rank groups whenever suitable analogues of these ingredients are available.

In this subsection, we briefly indicate how the arithmeticity part of the argument extends to a more general setting. We do not attempt to give a complete treatment, since the effective congruence classification is substantially more delicate outside $\SL(n)$.

Tits' theorem has been generalized to many higher-rank groups by the work of Vaserstein \cite{MR349864}, Raghunathan \cite{MR1141802}, and Venkataramana \cite{MR1306038}. See Theorem~\ref{thm: horospherical lattices}.

Let $G$ be an absolutely simple algebraic group defined over a number
field $F$, and use the notation $P^\pm$, $U^\pm$, $T$, and
$\Phi=\Phi(G,T)$ introduced in
Section~\ref{sec: horospherical lattice}. Let $\Phi^+$ be the set of
positive relative roots determined by $P^+$.

For each
\[
\beta\in\Phi^+\cup(-\Phi^+),
\]
choose $F$-coordinates
\[
u_{\beta,r}:F\longrightarrow U_\beta,
\qquad
1\leq r\leq d_\beta,
\]
where $d_\beta=\dim_F\mathfrak g_\beta$, and set
\[
\mathcal R_{\mathrm{hr}}
=
\setdef{(\beta,r)}
{
\beta\in\Phi^+\cup(-\Phi^+),\
1\leq r\leq d_\beta
}.
\]

A parameter system is a collection
\[
\mathfrak Q
=
(\mathfrak q_{\beta,r})_{(\beta,r)\in\mathcal R_{\mathrm{hr}}}
\]
of nonzero fractional $\mathcal O_F$-ideals. Define
\[
\Gamma(\mathfrak Q)
=
\left\langle
u_{\beta,r}(x):
x\in\mathfrak q_{\beta,r},\
(\beta,r)\in\mathcal R_{\mathrm{hr}}
\right\rangle
\leq G(F).
\]

The method of this paper, together with Theorem~\ref{thm: horospherical lattices}, gives the following consequence.

\begin{theorem}
\label{thm: general-higher-rank}
Assume that $\operatorname{rank}_F(G)\geq 2$.
Then, for every parameter system $\mathfrak Q$ as above,
$\Gamma(\mathfrak Q)$ is $S$-arithmetic for some finite set of places
$S$ of $F$.
\end{theorem}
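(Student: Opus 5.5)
The plan is to copy the proof of Theorem~\ref{thm: high rank number fields}, with Theorem~\ref{thm: horospherical lattices} taking the place of Tits' theorem and Lemma~\ref{lem: small lattice to big one} supplying the passage from an arithmetic subgroup to an $S$-arithmetic one. Fix an $\mathcal O_F$-model of $G$. For each $(\beta,r)\in\mathcal R_{\mathrm{hr}}$, set $\mathfrak s_{\beta,r}=\mathfrak q_{\beta,r}\cap\mathcal O_F$, a nonzero ideal of $\mathcal O_F$. Let $\Lambda^{\pm}$ be the subgroup of $U^{\pm}(F)$ generated by the elements $u_{\beta,r}(x)$ with $\pm\beta\in\Phi^+$ and $x\in\mathfrak s_{\beta,r}$.

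\textbf{Step 1: $\Lambda^\pm$ has finite index in $U^\pm(\mathcal O_F)$.} This is the only genuinely new point. Because the $U_\beta$ generate $U^+$, a unipotent group, products of the coordinate images $u_{\beta,r}(\mathfrak s_{\beta,r})$ land in $U^+(\mathcal O_F)$ after shrinking the ideals by a common integer if necessary. I would show $\Lambda^+$ contains a congruence subgroup $U^+(\mathfrak m)$ by induction along the lower central series of $U^+$, filtered by root height. Modulo $[U^+,U^+]$, the images of the generators span full-rank lattices in each simple root space $\mathfrak g_\alpha$, since the coordinates span $\mathfrak g_\alpha$ over $F$ and each ideal is a lattice. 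For a higher root $\beta$, the groups $U_\beta$ and $U_{2\beta}$ are central modulo deeper terms of the filtration, and the subgroup $\Lambda^+\cap U_\beta$ already contains $u_{\beta,r}(\mathfrak s_{\beta,r})$ for every $r$. Hence each successive quotient is hit in a finite-index lattice. Since a subgroup of a finitely generated nilpotent group that has finite-index image in every layer of a central series has finite index, the claim follows. Note that unlike in $\SL(n)$, we do not even need commutator relations here, because all root groups are directly among the generators.

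\textbf{Step 2: $\Lambda=\langle\Lambda^+,\Lambda^-\rangle$ is arithmetic.} Take $S_\infty$ to be the archimedean places. Then $\operatorname{rank}_{S_\infty}(G)\ge\operatorname{rank}_F(G)\ge2$, so Theorem~\ref{thm: horospherical lattices} applies, and $\Lambda$ is $S_\infty$-arithmetic. Since $\operatorname{rank}_F(G)\ge 2$, the rank-one characteristic caveat is irrelevant. Moreover $\Lambda\le\Gamma(\mathfrak Q)$.

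\textbf{Step 3: boundedness at almost all places.} The generating set is the union of finitely many ideals, so only finitely many primes $\mathfrak p$ appear with negative valuation in some $\mathfrak q_{\beta,r}$. At every other non-archimedean place $\mathfrak p$, each generator $u_{\beta,r}(x)$ lies in $G(\mathcal O_{F,\mathfrak p})$, enlarging the finite exceptional set to include the primes where the model is bad or the coordinates are non-integral. Therefore $\Gamma(\mathfrak Q)$ lies in the compact group $G(\mathcal O_{F_\mathfrak p})$ and is bounded. Let $S$ be the finite set of places where the projection of $\Gamma(\mathfrak Q)$ is unbounded. Since $\Lambda$ is already unbounded at the archimedean places, $S$ contains them all.

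\textbf{Step 4: conclusion.} We apply Lemma~\ref{lem: small lattice to big one} with $\Theta=\Gamma(\mathfrak Q)$ and the arithmetic subgroup $\Lambda$, which gives that $\Gamma(\mathfrak Q)$ is $S$-arithmetic. The main obstacle is Step 1: controlling finite index in non-split $U^\pm$, where root spaces can be multidimensional and $2\beta$ may be a root. The layer-by-layer nilpotent induction should handle this. Unlike the $\SL(n)$ case, no admissibility or diagonal conjugation is needed, because no explicit $S$ or conjugator is claimed.
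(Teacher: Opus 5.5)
Your proposal is correct and follows the route the paper indicates. The paper replaces Tits' theorem by Theorem~\ref{thm: horospherical lattices}, applied to the integral sub-parameter system, to get an arithmetic subgroup inside $\Gamma(\mathfrak Q)$, and then concludes with Lemma~\ref{lem: small lattice to big one}, exactly as in the proof of Theorem~\ref{thm: high rank number fields}. The paper gives only a one-line sketch, so your Steps 1 and 3 (finite index of $\Lambda^\pm$ via the height filtration, and boundedness at almost all places) supply details it leaves implicit.
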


Until the end of this subsection, we assume that $G$ is split over $F$. The set of places $S$ is expected to be determined by the places at which the local parameter system is unbounded. As in Section~\ref{sec: find N and g}, one can formulate this in terms of negative cycles, with the difference that only paths arising from nontrivial iterated commutator relations among relative root groups should be allowed.

There is, however, a new difficulty. In the case of $\SL(n)$, the diagonal conjugation problem reduces to the system of difference constraints in equation~\eqref{eq: kill p}. For a general relative root system, the corresponding inequalities
need not form a system of difference constraints. Instead, one is led
to a system of integral linear inequalities
\[
A\vec{x}\leq\vec{b}.
\]
If this system is infeasible, Farkas' lemma provides nonnegative
rational numbers $\lambda_1,\ldots,\lambda_r$, not all zero, such that
\[
\sum_{i=1}^r\lambda_i A_i=0
\qquad\text{and}\qquad
\sum_{i=1}^r\lambda_i b_i<0.
\]
After clearing denominators, the $\lambda_i$ may be taken to be
nonnegative integers. In the present setting, the relation
\[
\sum_{i=1}^r\lambda_i A_i=0
\]
has a graphical interpretation as a balanced integral flow in the
graph of allowable root relations. This flow decomposes into directed
cycles, and the negativity of the total weight implies that at least
one of these cycles has negative weight. Thus infeasibility is again
detected by a negative cycle, although the relevant cycles are more
general than those arising from ordinary difference constraints.

However, even when the system has a solution, a resulting solution may be rational rather than integral. Thus, even when $\mathcal O_F$ is a PID, one may need to pass to a finite extension of $F$ in order to realize the required conjugation by an algebraic element.

\begin{example}
Consider the root system $B_2$, realized, for instance, by the group $\operatorname{SO}(2,3)$. Let $\alpha$ and $\beta$ be the two simple roots. The following function on the roots is admissible and has no negative cycles:
\[
\begin{array}{c|cccccccc}
&\alpha&-\alpha&\beta&-\beta&\alpha+\beta&-\alpha-\beta&2\alpha+\beta&-2\alpha-\beta\\
\hline
f&0&1&0&0&0&1&-1&1
\end{array}
\]
A direct computation shows that the corresponding system of inequalities has a unique solution
\[
h(\alpha)=\tfrac{1}{2},\qquad h(\beta)=0.
\]
Thus, a diagonal conjugation need not exist over the original field.
\end{example}

The effective congruence classification becomes substantially more delicate in this generality. When the congruence kernel is nontrivial, finite-index subgroups need not be congruence subgroups. At present, the author does not know a general procedure for deciding, from a given set of generators, whether the resulting finite-index subgroup is congruence. For this reason, the explicit classification method developed for $\SL(n)$ does not directly extend to all higher-rank groups.

Finally, generalizations of Theorem~\ref{thm: congruence conditions} to other types of Chevalley groups can be found in the survey \cite{MR687837}.
 
\section{Proof of Theorem~\ref{Thm: characteristic S-arithmetic}}\label{Sec: proof of theorem B}
In this section, we prove Theorem~\ref{Thm: characteristic S-arithmetic}. Since the rank-one case is substantially less rigid, we work only over $\mathbb Q$ rather than over general fields. The proof follows the same general path as in the higher-rank case, but we give the details here.
\subsection{\texorpdfstring
  {Step 1: $\Gamma_q$ is an $S$-arithmetic lattice}
  {Step 1: Gamma q is an S-arithmetic lattice}}
\label{Sec: Step 1} 
In this section, we prepare for the proof of Theorem~\ref{Thm: characteristic S-arithmetic} by showing that $\Gamma_q$ contains sufficiently many unipotent elements to apply Theorem~\ref{thm: horospherical lattices}. The strategy follows the method used in the proof of \cite[Theorem~3.10]{MR5043742}. The only difference is that we use Lemma~\ref{Lemma: 1/t} in place of the Greenberg--Shalom Hypothesis in the proof of \cite[Proposition~3.1]{MR5043742}.

Before beginning the proof, note that $\Gamma_q=\Gamma_{-q}$. Therefore, we may assume throughout that $q\geq 0$. 

\subsubsection{Producing arithmetic subgroups from unipotents}
In this subsection, we give a sufficient condition ensuring that $\Gamma_q\cap \SL(2,\mathbb{Z})$ is a finite-index subgroup of $\SL(2,\mathbb{Z})$. 

The following lemma shows that the existence of a single non-integral unipotent element allows us to construct a horospherical lattice and hence apply Theorem~\ref{thm: horospherical lattices}.

Recall that $q=\tfrac{s}{t}\in\mathbb{Q}$.
\begin{lemma}\label{Lemma: 1/t}
  Let $t'\geq 2$ be an integer. if 
  \[
  M=\begin{pmatrix}
      1 & \tfrac{1}{t'}\\
      0&1
  \end{pmatrix}\in \Gamma_q,
  \]
  then $\Gamma_q$ contains a finite-index subgroup of $\SL\left(2,\mathbb Z\left[\tfrac{1}{t'}\right]\right)$. In particular, $\Gamma_q\cap\SL(2,\mathbb Z)$ has finite index in $\SL(2,\mathbb Z)$.
\end{lemma}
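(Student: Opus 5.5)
The plan is to produce finite-index subgroups of both $U^+(\mathbb Z[\tfrac1{t'}])$ and $U^-(\mathbb Z[\tfrac1{t'}])$ inside $\Gamma_q$, and then to invoke Theorem~\ref{thm: horospherical lattices} with $F=\mathbb Q$, $G=\SL(2)$ and $S=\{\infty\}\cup\{p: p\mid t'\}$. Since $t'\ge 2$, at least one prime divides $t'$, so $\operatorname{rank}_S(\SL(2))\ge 2$. The characteristic is $0$, so the extra hypothesis required in $F$-rank one holds.

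\textbf{Step 1 (upper unipotents).} The element $M$ lies in $\Gamma_q$, and it commutes with $A$ but does not generate a lattice in $\mathbb Z[\tfrac1{t'}]$ by itself. To obtain arbitrary powers of $1/t'$, I would conjugate $M$ by diagonal elements. The standard trick is to produce a hyperbolic element of $\Gamma_q$ fixing $\infty$ whose eigenvalue is a power of $t'$. A cleaner route is via the Weyl-type element. In $\SL(2,\mathbb R)$, the product $M\,B\,M$ (with $B$ lower unipotent of parameter $-t'$) equals $\begin{pmatrix}0&1/t'\\-t'&0\end{pmatrix}$. We have $B_q^{k}=\begin{pmatrix}1&0\\kq&1\end{pmatrix}$, so lower unipotents $\begin{pmatrix}1&0\\ x&1\end{pmatrix}$ with $x\in q\mathbb Z$ are available. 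Conjugating $B_q^{k}$ by $M$ and $A$ mixes things, so instead I would first build the lower analogue of $M$. Conjugating by $w_0=\begin{pmatrix}0&1/t'\\-t'&0\end{pmatrix}$ sends $\begin{pmatrix}1&x\\0&1\end{pmatrix}$ to $\begin{pmatrix}1&0\\-t'^2x&1\end{pmatrix}$. So, provided $w_0\in\Gamma_q$, the product $w_0^2\cdot w_0'$ of two such Weyl elements with parameters $1/t'$ and $1$ (namely $\begin{pmatrix}0&1\\-1&0\end{pmatrix}$-type) gives $\operatorname{diag}(t',1/t')^{\pm1}$. Concretely, I would look for the element
\[
D=\begin{pmatrix}0&1/t'\\-t'&0\end{pmatrix}\begin{pmatrix}0&-1\\1&0\end{pmatrix}=\begin{pmatrix}1/t'&0\\0&t'\end{pmatrix}.
\]
Both Weyl elements are of the form $u\,\ell\,u$ with $u$ upper unipotent in $\langle A,M\rangle$ and $\ell$ lower unipotent with parameter $-t'$ or $-1$. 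The key is therefore to get lower unipotents with parameters in $\mathbb Z$ (integral multiples suffice after replacing $t'$ by a suitable power and using $M^{m}$).

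\textbf{Step 2 (lower unipotents).} From $B_q^{t}=\begin{pmatrix}1&0\\s&1\end{pmatrix}$ we obtain the lower integral unipotent of parameter $s$. So $\langle A,B_q^{t}\rangle\supseteq\langle E_{12}(1),E_{21}(s)\rangle$. Over $\mathbb Z$ alone this need not be finite index, which is why $M$ is needed. With the parameter $s$ in place of $1$, I would redo the Weyl-element computation using upper parameters $1/t'^{k}$ scaled by $s$. This yields a diagonal element $\operatorname{diag}(\lambda,\lambda^{-1})$ with $\lambda$ a nontrivial ratio of $t'$-powers, possibly times $s$. I must check that it really involves $t'$ and is not just a unit; the exponents can be arranged so that $\lambda=t'^{2}$. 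Conjugating $M$ and $E_{21}(s)$ by powers of $D$ then gives $\begin{pmatrix}1&x\\0&1\end{pmatrix}$ for all $x\in\tfrac{1}{t'^{N}}\mathbb Z$ and $\begin{pmatrix}1&0\\ y&1\end{pmatrix}$ for all $y\in s\,t'^{N}\mathbb Z$ (and their $D^{-1}$ conjugates), for every $N$. Hence $\Gamma_q$ contains $U^+(\mathbb Z[\tfrac1{t'}])$ and the finite-index subgroup $U^-(s\mathbb Z[\tfrac1{t'}])$ of $U^-(\mathbb Z[\tfrac1{t'}])$.

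\textbf{Step 3.} Theorem~\ref{thm: horospherical lattices} then shows that $\langle\Lambda^+,\Lambda^-\rangle\le\Gamma_q$ is $S$-arithmetic, i.e.\ of finite index in $\SL(2,\mathbb Z[\tfrac1{t'}])$. Intersecting with $\SL(2,\mathbb Z)$ gives a finite-index subgroup of $\SL(2,\mathbb Z)$. The main obstacle is Step 1/2: verifying that the diagonal element produced is genuinely hyperbolic with eigenvalue a nontrivial power of $t'$, using only the lower parameter $s$ (not $1$). If the Weyl-element route fails, I would fall back on commutators $[M,E_{21}(s)]$ and trace computations to find a hyperbolic element fixing $\infty$.
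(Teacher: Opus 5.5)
Your overall architecture is the same as the paper's: build finite-index subgroups of $U^{\pm}(\mathbb Z[\tfrac1{t'}])$ inside $\Gamma_q$, then apply Theorem~\ref{thm: horospherical lattices}. Your Step~3 is fine. The gap is in Steps~1--2, because the Weyl-element route cannot work when $|s|\ge 2$, and the lemma is used with $s\ge 3$.

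\textbf{Why the Weyl route fails.} Since $\Gamma_q\le\Gamma_1^{(t)}(s)$, every element of $\Gamma_q$ has $(1,1)$-entry $a$ with $a\equiv_t 1\pmod s$. Because $1/s\notin\mathbb Z[\tfrac1t]$, this forces $a\neq 0$. So no antidiagonal element, such as $w_0$ or $\begin{pmatrix}0&-1\\1&0\end{pmatrix}$, lies in $\Gamma_q$.
\begin{itemize}
\item Concretely, every lower unipotent in $\Gamma_q$ has parameter in $s\mathbb Z[\tfrac1t]$. This excludes $E_{21}(-t')$ and $E_{21}(-1)$, since $\gcd(s,t')=1$.
\item Redoing the computation with $s$ would need an upper parameter $x=-1/y$ with $y\in s\mathbb Z[\tfrac1t]$, and such an $x$ is not in $\mathbb Z[\tfrac1t]$.
\item Likewise, any diagonal $\operatorname{diag}(\lambda,\lambda^{-1})\in\Gamma_q$ must satisfy $\lambda\equiv_t 1\pmod s$. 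So $\lambda=t'^2$ is not available in general; for example, $q=\tfrac52$ and $t'=2$ give $4\not\equiv_2 1\pmod 5$.
\end{itemize}
Your fallback, finding a hyperbolic element fixing $\infty$ via commutators and traces, is not an argument. Producing an element of $U_q\setminus H$ is exactly the hard problem the paper solves word by word, and nothing in your proposal produces one. Your Step~2 conjugates by the unavailable $D$, so it collapses as well.

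\textbf{The missing idea.} You do not need a diagonal element inside $\Gamma_q$ at all. Take $W=\operatorname{diag}(t'^{-1/2},t'^{1/2})\in\SL(2,\mathbb R)$. Then $WAW^{-1}=M$ and $WB_qW^{-1}=B_q^{t'}$, so $W\Gamma_qW^{-1}=\langle M,B_q^{t'}\rangle\le\Gamma_q$. Conjugation by $W$ therefore maps $\Gamma_q$ into itself. Iterating it on $M$ gives $E_{12}(t'^{-n})\in\Gamma_q$ for all $n\ge1$, hence $U^+(\mathbb Z[\tfrac1{t'}])\le\Gamma_q$.

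For the lower side, $M\in\Gamma_1^{(t)}(s)$ gives $\Pi(t')\subseteq\Pi(t)$, so $\gcd(s,t')=1$. Hence $t'^N=1+sm$ for some $N\ge1$ and $m\in\mathbb Z$. Then
\[
V=A^m B_q^t E_{12}\bigl(-\tfrac{m}{t'^N}\bigr)=\begin{pmatrix}t'^N&0\\ s&t'^{-N}\end{pmatrix}\in\Gamma_q
\]
is a lower-triangular hyperbolic element. Its diagonal entry satisfies $t'^N\equiv1\pmod s$, the congruence constraint your $D$ violated. Moreover $V^kB_q^tV^{-k}=E_{21}(s\,t'^{-2Nk})$, which gives $U^-(s\mathbb Z[\tfrac1{t'}])\le\Gamma_q$. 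Only at this point does Theorem~\ref{thm: horospherical lattices} apply.
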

\begin{proof}
Since $\Gamma_q$ is a subgroup of $\Gamma_1^{(t)}(s)$, we have $\Pi(t')\subset\Pi(t)$. By taking a sufficiently large power of $M$, we may assume that $t'\mid t$ without changing $\Pi(t')$.

Let \[
W_1=\begin{pmatrix}
        \tfrac{1}{\sqrt{t'}}&0\\
        0&\sqrt{t'}
    \end{pmatrix}. 
    \]
Since $W_1^{-1}MW_1=A$ and
$W_1^{-1}B_q^{t'}W_1=B_q$, the matrix $W_1$ conjugates the pair $(M,B_q^{t'})$ to $(A,B_q)$. Hence
\[
\Gamma_q\leq W_1^{-1}\Gamma_qW_1.
\]
Because $M\in\Gamma_q$, it follows that $M\in W_1^{-1}\Gamma_qW_1$. Therefore, 
    \[
    W_1MW_1^{-1}=\begin{pmatrix}
        1 &\tfrac{1}{t'^2}\\
        0 &1
    \end{pmatrix}\in \Gamma_q.
    \]
    
    Assume that for an integer $n<0$,
    \[
    \begin{pmatrix}
        1 &\tfrac{1}{t'^n}\\
        0 &1
    \end{pmatrix}\in \Gamma_q.
    \]
   The same argument, with $W_1$ replaced by
    \[
    W_n=\begin{pmatrix}
        (t')^{-\tfrac{n}{2}}& 0\\
        0 & (t')^{\tfrac{n}{2}}
    \end{pmatrix}.
    \]
    shows that 
    \[
    \begin{pmatrix}
        1 &\tfrac{1}{(t')^{2n}}\\
        0 &1
    \end{pmatrix}\in \Gamma_q.
    \]
    
    Hence, by induction, for all $n\in\mathbb{Z}$,
    \[
    \begin{pmatrix}
        1 &\tfrac{1}{t'^n}\\
        0 &1
    \end{pmatrix}\in \Gamma_q.
    \]
    Thus, 
   \[ U^+\left(\mathbb Z\left[\tfrac1{t'}\right]\right)
\leq\Gamma_q.\]

Since
\[
B_q^t=
\begin{pmatrix}
1&0\\
s&1
\end{pmatrix}
\in\Gamma_q,
\]
the result of Benoist--Oh \cite{MR2718935} already implies that
$\Gamma_q$ contains a finite-index subgroup of
$\SL\left(2,\mathbb Z\left[\tfrac1{t'}\right]\right)$.
However, in order to unify the rank-one and higher-rank arguments
without introducing an additional arithmeticity theorem, we construct
a second unipotent lattice directly and then apply
Theorem~\ref{thm: horospherical lattices}.

By the choice of $t'$, we have
$\Pi(t')\subseteq\Pi(t),$
and hence $\gcd(s,t')=1$. Therefore, there exist integers $N\geq1$
and $m$ such that
$t'^N=1+sm.$
Then
\[
V=
\begin{pmatrix}
1&m\\
0&1
\end{pmatrix}
\begin{pmatrix}
1&0\\
s&1
\end{pmatrix}
\begin{pmatrix}
1&-\tfrac{m}{t'^N}\\
0&1
\end{pmatrix}
=
\begin{pmatrix}
t'^N&0\\
s&t'^{-N}
\end{pmatrix}
\in\Gamma_q.
\]
Conjugating $B_q^t$ by powers of $V$, we obtain
\[
\begin{pmatrix}
1&0\\
\tfrac{s}{t'^{2Nk}}&1
\end{pmatrix}
\in\Gamma_q
\qquad
\text{for every }k\geq0.
\]
Thus,
\[
U^-\left(s\mathbb Z\left[\tfrac1{t'}\right]\right)
\leq\Gamma_q.
\]
Together with the previously constructed upper-unipotent lattice,
Theorem~\ref{thm: horospherical lattices} implies that $\Gamma_q$
contains a finite-index subgroup of
$\SL\left(2,\mathbb Z\left[\tfrac1{t'}\right]\right)$.

\end{proof}
\subsubsection{\texorpdfstring{$\Gamma_q$}{\Gamma_q} is an \texorpdfstring{$S$}{S}-arithmetic lattice}
In this section, we prove the first step towards the proof of Theorem~\ref{Thm: characteristic S-arithmetic}.

The key idea is that an element of $U_q\setminus H$ forces the existence of a non-integral unipotent element in $\Gamma_q$. By Lemma~\ref{Lemma: 1/t}, the presence of such an element implies that $\Gamma_q$ contains a horospherical lattice and is therefore $S$-arithmetic. Thus, the proof of Theorem~\ref{Them: finite index} reduces to extracting an appropriate unipotent element from the normalizer condition.
\begin{theorem}\label{Them: finite index}
    Let $q>0$ with $q\neq 3$. If $U_q\neq H$, then $\Gamma_q$ has finite index in $\SL\left(2,\mathbb{Z}\left[\tfrac{1}{t}\right]\right)$.
\end{theorem}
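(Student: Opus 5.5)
The plan is to turn an element of $U_q\setminus H$ into a non-integral upper unipotent element of $\Gamma_q$. Lemma~\ref{Lemma: 1/t} then gives a horospherical-type arithmetic subgroup, and Lemma~\ref{lem: small lattice to big one} upgrades it to the full set of places $\Pi(t)\cup\{\infty\}$.

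\emph{Step 1: extracting a non-integral translation.} Fix $u=\begin{pmatrix} a&b\\ 0&a^{-1}\end{pmatrix}\in U_q\setminus H$. Since $\Gamma_q\le\Gamma_1^{(t)}(s)\le \SL(2,\mathbb Z[\tfrac1t])$, the entry $a$ is a unit of $\mathbb Z[\tfrac1t]$, so $a=\pm t_1/t_2$ with $t_1,t_2$ coprime products of primes dividing $t$. There are two cases.
\begin{itemize}
\item If $a\neq\pm1$, then $uAu^{-1}$ is the translation by $a^2$ and $u^{-1}Au$ is the translation by $a^{-2}$. One of $a^{\pm2}$ is a non-integer $x/y$ in lowest terms with $y\ge2$. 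Taking an integer combination of this translation with powers of $A$ then produces the translation by $1/y$.
\item If $a=\pm1$, then $u^2$ is the translation by $\pm2b$ (up to sign conventions). Here $b\notin\mathbb Z$, since otherwise $u\in\langle A,\pm I\rangle$. One needs to handle the degenerate possibility that $2b\in\mathbb Z$. When $-I\notin\Gamma_q$ (i.e.\ $s\nmid 2$), the congruence $a\equiv_t 1\pmod s$ forces $a=1$, so $u$ is itself a translation by the non-integer $b$. When $s\mid 2$, I would instead multiply $u$ by $-I$ if $-I\in\Gamma_q$, or treat these finitely many small cases directly.
\end{itemize}
In all cases, writing the resulting non-integer as $r/t'$ with $\gcd(r,t')=1$, we use Bézout with powers of $A$ to get
\[
\begin{pmatrix}1&\tfrac1{t'}\\0&1\end{pmatrix}\in\Gamma_q ,\qquad t'\ge2 .
\]

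\emph{Step 2: arithmeticity over $\mathbb Z[\tfrac1{t'}]$, then over $\mathbb Z[\tfrac1t]$.} Lemma~\ref{Lemma: 1/t} gives a finite-index subgroup $\Lambda$ of $\SL(2,\mathbb Z[\tfrac1{t'}])$ inside $\Gamma_q$. Here $\Pi(t')\subseteq\Pi(t)$, and $\operatorname{rank}_S\ge 2$ because $t'\ge2$, so $\Lambda$ is $T$-arithmetic with $T=\Pi(t')\cup\{\infty\}$. Apply Lemma~\ref{lem: small lattice to big one} with $\Theta=\Gamma_q$. For $p\nmid t$ the generators $A$, $B_q$ lie in $\SL(2,\mathbb Z_p)$, so $\Gamma_q$ is bounded there. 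For $p\mid t$, the element $AB_q$ has trace $2+q$ with $v_p(2+q)=v_p(q)<0$, so $\Gamma_q$ is unbounded in $\SL(2,\mathbb Q_p)$. Hence $\Gamma_q$ is $S$-arithmetic for $S=\Pi(t)\cup\{\infty\}$. Since $\Gamma_q\le\SL(2,\mathbb Z[\tfrac1t])$, it therefore has finite index there.

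\emph{The integral case $t=1$.} Here Step 1 already shows that any $u\in U_q\setminus H$ is impossible unless $q\in\{1,2\}$. Indeed, for $q$ integral every element of $U_q$ has integral entries and unit diagonal, so $u=\pm A^k$ or $u=\pm I\cdot A^k$, forcing $u\in\langle A,-I\rangle$ with $-I\in\Gamma_q$. I would handle the integral case separately. For $q=1,2$, the group $\Gamma_q$ is classically of finite index in $\SL(2,\mathbb Z)$. For integral $q\ge4$, Brenner's ping-pong shows $U_q=H$, so the hypothesis is vacuous. The value $q=3$ is excluded, since there $\Gamma_3$ is of finite index but $U_3=H$, so nothing needs to be proved.

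I expect the main obstacle to be the bookkeeping in Step 1 when $a=-1$ and $s\mid2$, where sign issues and $-I$ interact. Everything else reduces directly to Lemma~\ref{Lemma: 1/t} and Lemma~\ref{lem: small lattice to big one}.
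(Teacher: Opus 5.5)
\textbf{Gap: Step~1 fails for $s\mid 2$, $q\notin\mathbb Z$.} These are the parameters $q=1/t$ and $q=2/t$ with $t$ odd. For them $-I\in\Gamma_q$. When $s=1$ one has $\langle A,B_q^t\rangle=\langle A,B_1\rangle=\SL(2,\mathbb Z)$. When $s=2$ one has $(A^{-1}B_q^t)^2=(A^{-1}B_2)^2=-I$, since $A^{-1}B_2$ has trace $0$.

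Consequently $U_q\supseteq\{\pm A^k\}\supsetneq H$ holds automatically. The hypothesis $U_q\neq H$ carries no information here, and the witness you are handed may be $u=-A^k$. Nothing can be extracted from such a $u$: $u^2=A^{2k}$ and $-u=A^k$ are integral translations, so multiplying by $-I$ does not help. Likewise, your assertion that $b\notin\mathbb Z$ ``since otherwise $u\in\langle A,\pm I\rangle$'' is no contradiction once $-I\in\Gamma_q$.

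Nor are these ``finitely many small cases''. They form two infinite families of parameters, for which the theorem asserts finite index outright. So you need a uniform argument, and it cannot rely on the given $u$. What you flagged as sign bookkeeping is really a case where the hypothesis is vacuous.

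\textbf{Fix.} Discard $u$ and construct an upper-triangular element directly. When $s\mid 2$, every $t$ coprime to $s$ satisfies $t\equiv\pm1\pmod s$. After replacing $t$ by $-t$ if necessary (this replaces $q$ by $-q$, and $\Gamma_{-q}=\Gamma_q$ because $B_{-q}=B_q^{-1}$), write $t=ks+1$. Then
\[
B_q^{-1}A^kB_q^{t}=\begin{pmatrix} t&k\\ 0&t^{-1}\end{pmatrix}\in\Gamma_q,\qquad |t|\ge 2,
\]
which lands in your branch $a\neq\pm1$. This is exactly how the paper handles these parameters: its Case~2 is the mechanism behind Corollary~\ref{Cor: t=kspm1}, and its Case~3 coincides with your treatment of $s\ge 3$. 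Alternatively, your Step~2 only needs some arithmetic subgroup of $\Gamma_q$. You could take $\Lambda=\langle A,B_q^t\rangle$, which has finite index in $\SL(2,\mathbb Z)$, and apply Lemma~\ref{lem: small lattice to big one} with $T=\{\infty\}$.

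With this patch your proof is complete. Your Step~2 is also a valid and shorter route than the paper's. The paper picks a minimal set $T$ of places in which $\Gamma_q$ is discrete, uses Knapp's non-discreteness and irreducibility, and applies \cite[Lemma~A.18]{MR5043742}. You instead feed the subgroup from Lemma~\ref{Lemma: 1/t} straight into Lemma~\ref{lem: small lattice to big one}. There, boundedness at $p\nmid t$ and unboundedness at $p\mid t$ are read off from $\operatorname{tr}(AB_q)=2+q$.
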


The proof proceeds by showing that a nontrivial element of $U_q$ produces a non-integral unipotent element of $\Gamma_q$, after which Lemma~\ref{Lemma: 1/t} yields arithmeticity.
\begin{proof}
Recall that when $q\geq 4$, Brenner \cite{MR75952} shows that $\Gamma_q$ is free, and in particular, $U_q=H$. Hence, we may assume throughout that $0<q<4$.

 The argument naturally divides into three cases. 

\textbf{Case 1: \texorpdfstring{$q$}{q} is an integer.}
The relevant results are classical. We list them here for completeness.

When $q= 1$ or $2$, $\Gamma_q=\Gamma_1(q)$ and \[
U_q=H\times\begin{pmatrix}
    \pm 1&0\\0&\pm 1
\end{pmatrix}.
\]

When $q=3$, $\Gamma_q=\Gamma_1(3)$ and $U_q=H$.

\textbf{Case 2: the setting of Corollary \ref{Cor: t=kspm1}.}
Let $q=\tfrac{s}{t}\in \mathbb{Q}\setminus{\mathbb{Z}}$. Assume that there exists $k\in \mathbb{Z}$ such that $t=ks\pm 1$. Changing the sign of $t$ if necessary, we may assume that $t=ks+1$. It follows that 
\begin{equation}\label{eq: t=kspm1}
    X=B^{-1}A^kB^t=\begin{pmatrix}
    t &k\\
    0& \tfrac{1}{t}
\end{pmatrix}\in \Gamma_q.
\end{equation}
Hence, for any $n\in \mathbb{Z}$,
\[
X^{-n}AX^{n}=\begin{pmatrix}
    1 &\tfrac{1}{t^{2n}}\\
    0 &1
\end{pmatrix}\in \Gamma_q.
\]

By Theorem~\ref{thm: horospherical lattices}, $\Gamma_q$ is a finite-index subgroup of $\SL\left(2,\mathbb{Z}\left[\tfrac{1}{t}\right]\right)$.

In particular, the theorem applies to all cases with $s=1,2,3,4,6$. This phenomenon is not entirely new: special cases of equation~\eqref{eq: t=kspm1} were already observed in \cite[Proposition 7]{MR258975}, and a closely related equation appears in the proof of \cite[Theorem 2]{MR1370894}.

\textbf{Case 3: the general case.} 
Let $q=\tfrac{s}{t}\in\mathbb Q\setminus\mathbb Z$, where $s\geq3$ and $q<4$, and assume that $U_q\neq H$. Choose $V\in U_q\setminus H$. Then \[
V=\begin{pmatrix}
    a& b\\
    0&\tfrac{1}{a}
\end{pmatrix}.
\]

We distinguish two situations. 

Assume that $a$ and $\tfrac{1}{a}$ are both integers. As $\Gamma_q$ is a subgroup of $\Gamma_1^{(t)}(s)$, we have 
\[
a\equiv_t 1 \pmod s.
\]
Since $s\geq 3$, it follows that $a=1$. Therefore, $b=\tfrac{s'}{t'}\in \mathbb{Q}\setminus{\mathbb{Z}}$. By Bézout's identity, there exist $u, v\in \mathbb{Z}$ such that
\[
\begin{pmatrix}
    1 & \tfrac{1}{t'}\\
    0&1
\end{pmatrix}=A^uV^v\in \Gamma_q.
\]

Otherwise, replacing $a$ by $a^{-1}$ if necessary, we may assume that
$a\notin\mathbb Z$. Write
\[
a=\tfrac{m}{n}
\]
in lowest terms. We have
\[
\begin{pmatrix}
    1& \tfrac{m^2}{n^2}\\
    0&1
\end{pmatrix}=VAV^{-1}\in \Gamma_q.
\]
By Bézout's theorem,
\[
\begin{pmatrix}
    1&\tfrac{1}{n^2}\\
    0&1
\end{pmatrix}\in \Gamma_q.
\]

In both situations, Lemma~\ref{Lemma: 1/t} gives
\begin{equation}\label{eq: Gamma_q contains a lattice}
    \left[\SL(2,\mathbb{Z}):\Gamma_q\cap\SL(2,\mathbb{Z})\right]<\infty.
\end{equation}

\medskip
 
The remainder of the proof follows the strategy of \cite[Theorem 3.10]{MR5043742}. We include the argument for completeness. Note that it is essentially the same as Lemma~\ref{lem: small lattice to big one}; however, the version here is more adapted to the question of $\Gamma_q$.

The group $\Gamma_q$ is a discrete subgroup of $L=\SL(2,\mathbb{R})\times\prod_{p\in \Pi(t)}\SL(2,\mathbb{Q}_p)$.Set $\mathbb Q_\infty=\mathbb R$. For every subset $S\subseteq\{\infty\}\cup\Pi(t)$, define
\[
L_S=\prod_{p\in S}\SL(2,\mathbb Q_p),
\]
and let $\operatorname{pr}_S:L\to L_S$ denote the projection.

 Let $T\subset\{\infty\}\cup\Pi(t)$ be a minimal subset such that $\Gamma_q$ is discrete in 
 \[L_T=\prod_{p\in T}\SL(2,\mathbb{Q}_p).\] The set $T$ contains at least two places. Indeed, the projection of
$\Gamma_q$ to $\SL(2,\mathbb R)$ is not discrete by the work of Knapp \cite{MR248231}, while its projection to $\SL(2,\mathbb Q_p)$ is not discrete because $A^{p^r}\to I$ in $\SL(2,\mathbb Q_p)$. Moreover, these projections are Zariski dense: their Zariski closures cannot be solvable, and neither $\SL(2,\mathbb R)$ nor $\SL(2,\mathbb Q_p)$ contains a proper Zariski-closed nonsolvable subgroup. Therefore $\operatorname{pr}_T(\Gamma_q)\subset L_T$ is irreducible. 

We continue the proof by applying \cite[Lemma~A.18]{MR5043742} with
\[
\Gamma=\Gamma_q,\qquad
G_0=\SL(2,\mathbb R),\qquad
G_{\mathrm{na}}
=\prod_{p\in T\setminus\{\infty\}}\SL(2,\mathbb Q_p).
\]
Then
\[
\operatorname{pr}_0(\Gamma_{\mathrm{na}})
=\Gamma_q\cap\SL(2,\mathbb Z).
\]
By equation~\eqref{eq: Gamma_q contains a lattice} and
\cite[Lemma~A.18]{MR5043742}, $\Gamma_q$ is a lattice in $L_T$.
Therefore $\Gamma_q$ is commensurable with $\SL\left(2,\mathbb{Z}\left[\tfrac{1}{t'}\right]\right)$ where $t'=\prod_{p\in T\setminus\{\infty\}}p$. 

To see that $T=\Pi(t)\cup \{\infty\}$, it suffices to show that $\Gamma_q$ has unbounded projection
to $\SL(2,\mathbb{Q}_p)$ for all $p\in \Pi(t)$. Indeed, $A^{-1}B_q$ has trace $2-q=2-\tfrac{s}{t}$, and is
therefore not elliptic in $\SL(2,\mathbb{Q}_p)$. In particular, $A^{-1}B_q$ generates an unbounded
subgroup.

In summary, we have shown that for every $q>0$ with $q\neq 3$, if $U_q\neq H$, then $\Gamma_q$ is of finite index in $\SL\left(2,\mathbb{Z}\left[\tfrac{1}{t}\right]\right)$.  
\end{proof}
\subsection{Step 2: the group structure}
In this section, we complete the proof of Theorem~\ref{Thm: characteristic S-arithmetic}. By Theorem~\ref{Them: finite index}, the group $\Gamma_q$ is an $S$-arithmetic subgroup of $\SL\left(2,\mathbb{Z}\left[\tfrac{1}{t}\right]\right)$. Using Serre's congruence subgroup theorem, we first show that $\Gamma_q$ contains a principal congruence subgroup of level supported on the primes dividing $s$. We then identify the image of $\Gamma_q$ modulo $s^\lambda$, which allows us to conclude that $\Gamma_q=\Gamma_1^{(t)}(s)$.

Let $q=\tfrac{s}{t}\in\mathbb Q$, and assume that $U_q\neq H$.

When $q\in\mathbb{Z}$, Theorem~\ref{Thm: characteristic S-arithmetic} is a classical result. 

Assume that $q\notin\mathbb{Z}$. By Theorem~\ref{Them: finite index}, $\Gamma_q$ is a finite-index subgroup of $\SL\left(2,\mathbb{Z}\left[\tfrac{1}{t}\right]\right)$. 
For $n\in\mathbb N^+$, define \[
\Gamma_0^{(t)}(n)=\setdef{\begin{pmatrix}
    a&b\\
    c&d
\end{pmatrix}}{a\equiv_td\equiv_t1 \pmod n, \quad b\equiv_tc\equiv_t 0 \pmod n}.
\] Note that $\Gamma_0^{(t)}(n)=\ker \left(\operatorname{mod}_n:\SL\left(2,\mathbb{Z}\left[\tfrac{1}{t}\right]\right)\to\SL(2, \mathbb{Z}/n\mathbb{Z})\right)$.
Then $\Gamma_q$ contains $\Gamma_0^{(t)}(n)$ for some $n$ by Theorem~\ref{Thm: CSP}.

The first step is to show that the congruence level can be chosen to involve only primes dividing $s$. 

Let $\Omega$ be a minimal set of primes for which there exists an integer $n$ satisfying $\Pi(n)=\Omega$ and
\[
\Gamma_0^{(t)}(n)\leq \Gamma_q.
\]
Suppose, for contradiction, that $\Omega\setminus\Pi(s)\neq\emptyset$. Choose a prime $p\in \Omega\setminus\Pi(s)$, and write $n=p^m n'$, where $p$ and $n'$ are coprime. By the Chinese Remainder Theorem,
\[
\SL(2,\mathbb Z/n\mathbb Z) \simeq \SL(2,\mathbb Z/p^m\mathbb Z)\times\SL(2,\mathbb Z/n'\mathbb Z).
\]

Let
\[
\operatorname{mod}_{p^m}:\SL\left(2,\mathbb Z\left[\tfrac1t\right]\right)\to \SL(2,\mathbb Z/p^m\mathbb Z)
\]
be the reduction map modulo $p^m$.

Since $s$ and $p$ are coprime, there exists $w\in\mathbb Z$ such that
$sw\equiv 1 \pmod{p^m}$.
Hence $\operatorname{mod}_{p^m}(\Gamma_q)$ contains
\[
A \pmod{p^m}
\quad\text{and}\quad
(B_q)^{tw} \equiv B_1 \pmod{p^m}.
\]
Since $A$ and $B_1$ generate $\SL(2,\mathbb Z)$ and the reduction map $\SL(2,\mathbb{Z})\to\SL(2,\mathbb{Z}/p^m\mathbb{Z})$ is surjective, it follows that
\[
\operatorname{mod}_{p^m}(\Gamma_q)
=\SL(2,\mathbb Z/p^m\mathbb Z).\]
Consequently,
\[
\Gamma_0^{(t)}(n')\leq \Gamma_q.
\]
Since
$\Pi(n')=\Omega\setminus\{p\}$,
this contradicts the minimality of $\Omega$. Therefore,
$\Omega\subset \Pi(s)$.

Therefore, there exists $\lambda\in \mathbb{N}$ such that $\Gamma_0^{(t)}(s^\lambda)\leq \Gamma_q$.

Let
\[
\operatorname{mod}_{s^\lambda}:\SL\left(2,\mathbb Z\left[\tfrac1t\right]\right)\to \SL(2,\mathbb Z/s^\lambda\mathbb Z)
\]
be the reduction map modulo $s^\lambda$, and let
\[
\operatorname{Mod}_s:\SL(2,\mathbb Z/s^\lambda\mathbb Z)\to \SL(2,\mathbb Z/s\mathbb Z)
\]
be the natural projection.

Having reduced to congruence levels supported on $s$, we now reformulate the desired equality $\Gamma_q=\Gamma_1^{(t)}(s)$ in terms of the reduction map modulo $s^\lambda$.

The equality $\Gamma_q=\Gamma_1^{(t)}(s)$ is equivalent to 
\[
\operatorname{mod}_{s^\lambda}(\Gamma_q)=(\operatorname{Mod}_s)^{-1}
\left(
\left\{
\begin{pmatrix}
1 & *\\
0 & 1
\end{pmatrix}
\right\}\right).
\]
Thus, the proof reduces to verifying that the image of $\Gamma_q$ modulo $s^\lambda$ is exactly the preimage of the standard upper-unitriangular subgroup of $\mathrm{SL}(2,\mathbf Z/s\mathbf Z)$.

Therefore, to prove Theorem~\ref{Thm: characteristic S-arithmetic}, it is sufficient to verify the following two conditions:
\begin{enumerate}
    \item\label{Condition 1} $\ker(\operatorname{Mod}_s) \subset\operatorname{mod}_{s^\lambda}(\Gamma_q)$.
    \item\label{condition 2} $\operatorname{Mod}_s\!\bigl(\operatorname{mod}_{s^\lambda}(\Gamma_q)\bigr) = \left\{ \begin{pmatrix} 1 & *\\ 0 & 1 \end{pmatrix} \right\}$.
\end{enumerate}

Condition~\eqref{condition 2} follows from the fact that $\operatorname{Mod}_s(\operatorname{mod}_{s^\lambda}(B_q))=I_2\pmod{s}$. Let 
\[C=\begin{pmatrix}
    a &b\\
    c &d
\end{pmatrix}\in \ker(\operatorname{Mod}_s).\]
Then $a-1\equiv d-1\equiv c\equiv b\equiv 0\pmod{s}$. Choose $\lambda_1$ such that $a\lambda_1+b\equiv 1\pmod{s^\lambda}$. Then 
\[
CA^{\lambda_1}=\begin{pmatrix}
    a &1\\
    c &d_1
\end{pmatrix}\pmod{s^\lambda}.
\]
Let $\mu_1=\tfrac{1-a}{s}$ and $\mu_2=\tfrac{1-d_1}{s}$. We have
\[
B_q^{t\mu_2}CA^{\lambda_1}B_q^{t\mu_1}=\begin{pmatrix}
    1& 1\\
    c_1&1
\end{pmatrix}\pmod{s^{\lambda}}.
\]
Since $\det\begin{pmatrix}
    1& 1\\
    c_1&1
\end{pmatrix}\equiv 1\pmod{s^\lambda}$, it follows that $c_1\equiv 0\pmod{s^\lambda}$. Hence,
\[
A^{-1}B_q^{t\mu_2}CA^{\lambda_1}B_q^{t\mu_1}\in \Gamma_{0}^{(t)}(s^\lambda).
\]
Therefore $C\in \Gamma_q$. This verifies Condition~\eqref{Condition 1} and completes the proof of Theorem~\ref{Thm: characteristic S-arithmetic}.

We conclude the section by proving Corollary~\ref{Cor: t=kspm1}.
\begin{proof}
By equation~\eqref{eq: t=kspm1}, $U_q\neq H$. Therefore, $q\in\mathcal{CS}$ by Theorem~\ref{Thm: characteristic S-arithmetic}.
\end{proof}
\section{General rank-one groups}\label{sec:rank-one-general}
We discuss an application of
Theorem~\ref{Thm: characteristic S-arithmetic} to simple real Lie
groups of real rank-one.

Such groups are classified, up to isogeny, into three infinite
families and one exceptional group:
\begin{enumerate}
    \item $\operatorname{PSO}(n,1)$;
    \item $\operatorname{PU}(n,1)$;
    \item $\operatorname{PSp}(n,1)$;
    \item $F_4^{-20}$.
\end{enumerate}

Let $\mathbf G$ be a fixed $\mathbb Q$-form of one of these groups.
Choose a composition algebra $K_0$ over the rational numbers compatible with the
standard rank-one model, so that
\[
K_0\otimes_{\mathbb Q}\mathbb R
\cong
K,
\qquad
K\in\{\mathbb R,\mathbb C,\mathbb H,\mathbb O\},
\]
and fix a maximal $\mathbb Z$-order
\[
\mathcal A\subset K_0.
\]

Set
\[
r=
\begin{cases}
n-1,
&
\mathbf G(\mathbb R)
\text{ is locally isomorphic to }
\operatorname{PSO}(n,1),
\operatorname{PU}(n,1),
\text{ or }\operatorname{PSp}(n,1),\\[2mm]
1,
&
\mathbf G(\mathbb R)
\text{ is locally isomorphic to }F_4^{-20}.
\end{cases}
\]

The positive unipotent Lie algebra admits a decomposition
\[
\mathfrak u^+
\cong
K_0^r\oplus\operatorname{Im}(K_0)
\]
as a $\mathbb Q$-vector space. Equivalently, $U^+$ may be parametrized
by pairs
\[
(v,s)\in K_0^r\oplus\operatorname{Im}(K_0),
\]
with a two-step nilpotent group law. The first summand corresponds to
the restricted root $\alpha$, and the second corresponds to the
restricted root $2\alpha$. When $K=\mathbb R$, the second summand is
trivial.

Let $\mathcal L(K_0)$ denote the set of $\mathbb Z$-lattices in
\[
K_0=\mathcal A\otimes_{\mathbb Z}\mathbb Q,
\]
and let $\mathcal L(\operatorname{Im}(K_0))$ denote the set of
$\mathbb Z$-lattices in $\operatorname{Im}(K_0)$.

Define
\[
\mathcal I_{\mathrm{rk1}}
=
\{\pm(\alpha,i):1\leq i\leq r\}
\cup
\mathcal I_{2\alpha},
\]
where
\[
\mathcal I_{2\alpha}
=
\begin{cases}
\{\pm(2\alpha)\},&\operatorname{Im}(K_0)\neq0,\\
\varnothing,&\operatorname{Im}(K_0)=0.
\end{cases}
\]

For $1\leq i\leq r$, let
\[
u_{\pm(\alpha,i)}:K_0\longrightarrow U_{\pm\alpha}
\]
be the corresponding horospherical coordinate maps. When
$\operatorname{Im}(K_0)\neq0$, let
\[
u_{\pm2\alpha}:\operatorname{Im}(K_0)
\longrightarrow U_{\pm2\alpha}
\]
be the coordinate maps for the central root groups.

A parameter system is a collection
\[
\mathfrak Q=(L_\delta)_{\delta\in\mathcal I_{\mathrm{rk1}}},
\]
where
\[
L_\delta\in\mathcal L(K_0)
\quad\text{if}\quad
\delta=\pm(\alpha,i),
\]
and
\[
L_\delta\in\mathcal L(\operatorname{Im}(K_0))
\quad\text{if}\quad
\delta=\pm2\alpha.
\]

For each $\beta\in\mathcal I_{\mathrm{rk1}}$, let
\[
V_\beta
=
\begin{cases}
K_0, & \text{if }\beta=\pm(\alpha,i),\\[1mm]
\operatorname{Im}(K_0), & \text{if }\beta=\pm2\alpha.
\end{cases}
\]
Choose the opposite-root coordinate maps
\[
u_\beta:V_\beta\longrightarrow U_\beta
\qquad\text{and}\qquad
u_{-\beta}:V_{-\beta}\longrightarrow U_{-\beta}.
\]

For $x\in V_\beta$ and $y\in V_{-\beta}$, we say that $(x,y)$ is an
$\SL_2$-pair if there exist a homomorphism
\[
\iota_{x,y}:\SL(2,\mathbb R)\longrightarrow G
\]
and a nonzero real number $q$ such that
\[
\iota_{x,y}
\begin{pmatrix}
1&1\\
0&1
\end{pmatrix}
=
u_\beta(x)
\]
and
\[
\iota_{x,y}
\begin{pmatrix}
1&0\\
q&1
\end{pmatrix}
=
u_{-\beta}(y).
\]
With respect to the chosen normalization of the opposite-root coordinates, the number $q$ is uniquely determined by $(x,y)$. Define
\[
c_\beta:V_\beta\times V_{-\beta}\longrightarrow\mathbb R
\]
by
\[
c_\beta(x,y)
=
\begin{cases}
q, & \text{if $(x,y)$ is an $\SL_2$-pair with parameter $q$},\\
0, & \text{otherwise}.
\end{cases}
\]
For a parameter system
\[
\underline{\mathfrak Q}
=
(L_\beta)_{\beta\in\mathcal I_{\mathrm{rk1}}},
\]
define
\[
Q_\beta
=
\left\{
c_\beta(x,y):
x\in L_\beta,\ y\in L_{-\beta}
\right\}
\cap\mathbb Q^\times.
\]

Define
\[
\Gamma(\mathfrak Q)
=
\langle
u_\beta(t):t\in L_\beta,\ \beta\in\mathcal I_{\mathrm{rk1}}
\rangle.
\]

The following is a particularly simple case to which our method applies.

\begin{theorem}
Let $\mathfrak Q$ be a parameter system of a simple real Lie group $G$ of rank-one. Suppose that
\[
Q_\beta\cap\mathcal{CS}\neq\emptyset
\]
for some $\beta\in\mathcal I_{\mathrm{rk1}}$. Then $\Gamma(\mathfrak Q)$ is $S$-arithmetic for some finite set of places $S$.
\end{theorem}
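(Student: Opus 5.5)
Pick $\beta\in\mathcal I_{\mathrm{rk1}}$ and $q=\tfrac{s}{t}\in Q_\beta\cap\mathcal{CS}$, realized by $x\in L_\beta$, $y\in L_{-\beta}$ with $c_\beta(x,y)=q$. Let $\iota=\iota_{x,y}:\SL(2,\mathbb R)\to G$ be the associated homomorphism. By construction $\iota(A)=u_\beta(x)$ and $\iota(B_q)=u_{-\beta}(y)$, so $\iota(\Gamma_q)\le\Gamma(\mathfrak Q)$. Since $q\in\mathcal{CS}$, we have $\Gamma_q=\Gamma_1^{(t)}(s)$, and therefore
\[
\iota\bigl(\Gamma_1^{(t)}(s)\bigr)\le\Gamma(\mathfrak Q).
\]
The plan is to feed this embedded $S$-arithmetic copy of $\SL_2$ into the machinery of Theorem~\ref{thm: horospherical lattices} or Lemma~\ref{lem: small lattice to big one}.

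\textbf{Step 1: $\iota$ is defined over $\mathbb Q$.} Since $u_{\pm\beta}$ are $\mathbb Q$-coordinate maps and $x,y$ are rational, the $\SL_2$-triple generated by $\log u_\beta(x)$ and $\log u_{-\beta}(y)$ is rational. Hence $\iota$ comes from a $\mathbb Q$-homomorphism $\SL_2\to\mathbf G$, and $\iota(\SL(2,\mathbb Z[\tfrac1t]))$ is commensurable with a subgroup of $\mathbf G(\mathbb Z[\tfrac1t])$.

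\textbf{Step 2: lattices in both horospherical subgroups.} Take $h=\iota(\operatorname{diag}(t,t^{-1}))\in\Gamma(\mathfrak Q)$, up to finite index, which is available because $\Gamma_1^{(t)}(s)$ contains such diagonal powers. The element $h$ lies in the $\mathbb Q$-split torus $T$ (after conjugating within $\mathbf G(\mathbb Q)$ to a standard position) and acts on $U_{\pm\alpha}$ and $U_{\pm2\alpha}$ by powers of $t^{\pm1}$ or $t^{\pm2}$. Conjugating the lattices $u_\delta(L_\delta)$ by powers of $h$ yields $\mathbb Z[\tfrac1t]$-modules. Because $\Gamma(\mathfrak Q)$ contains lattices in every root coordinate, taking commutators of $\alpha$-pieces to fill the $2\alpha$-part, we obtain finite-index subgroups of $U^\pm(\mathbb Z[\tfrac1t])$ inside $\Gamma(\mathfrak Q)$. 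Here $t\ge2$, since $q\notin\mathbb Z$ may be assumed; if $q\in\mathbb Z$, Corollary~\ref{cor: /n} replaces $q$ by $q/n$, which lies in $Q_\beta$ after shrinking $y$ by $n$ within the lattice.

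\textbf{Step 3: conclude.} With $S=\{\infty\}\cup\Pi(t)$, we have $\operatorname{rank}_S(\mathbf G)\ge2$ and $\operatorname{rank}_{\mathbb Q}\mathbf G=1$ in characteristic $0$. Theorem~\ref{thm: horospherical lattices} therefore makes $\langle\Lambda^+,\Lambda^-\rangle\le\Gamma(\mathfrak Q)$ $S$-arithmetic. Lemma~\ref{lem: small lattice to big one} then shows that $\Gamma(\mathfrak Q)$ itself is $S'$-arithmetic, where $S'$ is the finite set of places at which it is unbounded; this set is finite because the generators are rational.

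The main obstacle is Step 2. We need to ensure that the torus element $h$ expands every root space, in particular the full $K_0^r$ and all $r$ coordinates $(\alpha,i)$, not just the $\beta$-line. We must also handle the case where $\beta$ is $(\alpha,i)$ rather than $2\alpha$, since then $h$ acts on $U_{2\alpha}$ with weight $t^2$. The first approach to try is to identify $\iota(\operatorname{diag})$ with a cocharacter of the central split torus, which acts by scalars on each restricted root space.
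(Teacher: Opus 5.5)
Your route is the paper's own: embed $\Gamma_q=\Gamma_1^{(t)}(s)$ via $\iota_{x,y}$, take $h=\iota(\operatorname{diag}(t^k,t^{-k}))$ in the split torus, conjugate the lattices $u_\delta(L_\delta)$ by powers of $h$ to get finite-index subgroups of $U^\pm(\mathbb Z[\tfrac1t])$, and invoke Theorem~\ref{thm: horospherical lattices}.

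\textbf{The gap.} The genuine gap is your reduction to $t\ge2$.
\begin{itemize}
\item Replacing $y$ by $y/n$ takes you out of $L_{-\beta}$ in general. A $\mathbb Z$-lattice is closed only under integer multiples, so inside the given lattices you can only produce $c_\beta(nx,y)=c_\beta(x,ny)=nq$.
\item Corollary~\ref{cor: /n} says $q/n\in\mathcal{CS}$, not $q/n\in Q_\beta$. It therefore cannot create a non-integral witness.
\item So suppose $Q_\beta\cap\mathcal{CS}\subseteq\{\pm1,\pm2,\pm3\}$ for every $\beta$. The hypotheses allow this, e.g.\ if all values of $c_\beta$ on $L_\beta\times L_{-\beta}$ lie in $3\mathbb Z$ and $3$ is attained. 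Then your $h$ is trivial, $S=\{\infty\}$ has $\operatorname{rank}_S(\mathbf G)=1$, and Theorem~\ref{thm: horospherical lattices} does not apply.
\item What remains is an arithmetic subgroup of an embedded $\SL(2,\mathbb Z)$. That is not the $T$-arithmetic subgroup of $\mathbf G$ which Lemma~\ref{lem: small lattice to big one} requires.
\end{itemize}
The paper's proof has the same blind spot: its element $\operatorname{diag}(t^k,t^{-k})$ is the identity when $t=1$. To close it, you must either assume that $Q_\beta\cap\mathcal{CS}$ contains a non-integer, or give a separate argument for integral $q$. The results cited in the paper do not provide such an argument.

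\textbf{The rest of the proposal.} Otherwise your proposal is, if anything, more careful than the paper.
\begin{itemize}
\item Theorem~\ref{thm: horospherical lattices} only makes $\langle\Lambda^+,\Lambda^-\rangle$ $S$-arithmetic. Passing to $\Gamma(\mathfrak Q)$ needs exactly your Step~3 appeal to Lemma~\ref{lem: small lattice to big one}, using that $\Gamma(\mathfrak Q)$ is finitely generated by rational elements and hence bounded at almost all places. The paper leaves this step implicit.
\item The obstacle you flag in Step~2 is not a real one, and your suggested resolution works. Put $X=\log u_\beta(x)\in\mathfrak g_\beta$ and $Y=\log u_{-\beta}(y)\in\mathfrak g_{-\beta}$. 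Then $H=[X,Y]$ lies in $\mathfrak g_0=\mathfrak m\oplus\mathfrak a$.
\item $\operatorname{ad}H$ has real spectrum, since $H$ is the semisimple element of an $\mathfrak{sl}_2$-triple. Meanwhile $\mathfrak m$ is compact and commutes with $\mathfrak a$. Hence the $\mathfrak m$-component of $H$ vanishes, so $H\in\mathfrak a$ with $\beta(H)=2$.
\item Because $\mathfrak a$ is one-dimensional (and $H$ is rational), $h$ acts on $\mathfrak g_\alpha$ and $\mathfrak g_{2\alpha}$ by the scalars $t^{k\alpha(H)}$ and $t^{2k\alpha(H)}$, where $\alpha(H)\in\{1,2\}$. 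This holds uniformly in all coordinates $(\alpha,i)$.
\item No conjugation to a standard position is needed. No commutators are needed either, since $L_{\pm2\alpha}$ are part of the data.
\end{itemize}
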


\begin{proof}
Let $q=\tfrac{s}{t}\in Q_\delta\cap\mathcal{CS}$, and choose
$x\in L_\delta$ and $y\in L_{-\delta}$ such that
$c_\delta(x,y)=q$. By the normalization of the opposite-root
coordinates, the subgroup generated by $u_\delta(x)$ and
$u_{-\delta}(y)$ is identified with $\Gamma_q$ inside an embedded
copy of $\SL(2,\mathbb R)$.

Since $q\in\mathcal{CS}$, this subgroup contains the image of
\[
\begin{pmatrix}
t^k&0\\
0&t^{-k}
\end{pmatrix}
\]
for some $k>0$. Denote this image by $a$. Then $a$ belongs to the
one-dimensional split torus $T$, and conjugation by $a$ acts on the
restricted root groups $U_{\pm\alpha}$ and $U_{\pm2\alpha}$ by
nontrivial integral powers of $t$.

Because every $L_\eta$ is a $\mathbb Z$-lattice and
$\mathcal I_{\mathrm{rk1}}$ is finite, $\Gamma(\mathfrak Q)$ contains finite-index subgroups of
both
\[
U^+\left(\mathbb Z\left[\tfrac{1}{t}\right]\right)
\qquad\text{and}\qquad
U^-\left(\mathbb Z\left[\tfrac{1}{t}\right]\right).
\]
 Theorem~\ref{thm: horospherical lattices} therefore implies that
$\Gamma(\mathfrak Q)$ is $S$-arithmetic for some finite set of
places $S$.
\end{proof}

\section{Low-step relation numbers}\label{Sec: step 1 and 2}
In this section, we study the relation numbers of step at most two. The results obtained here will be used in the proof of Theorem~\ref{thm:Pell-equation}.

We begin with some notation.

\begin{defn}
Let $l\in\mathbb N$. A complex number $q\neq 0$ is called an \emph{$l$-step relation number} if there exist nonzero integers
\[
m_1,m_2,\ldots,m_{2l+1}
\]
such that
\[
B_q^{m_1}A^{m_2}\cdots A^{m_{2l}}B_q^{m_{2l+1}}\in U,
\]
and no such sequence exists for any smaller positive integer.

By convention, $0$ is the unique $0$-step relation number.
\end{defn}
In what follows, we will analyze which one- and two-step relation numbers give rise to arithmetic lattices via Theorem~\ref{Thm: characteristic S-arithmetic}.
\subsection{One-step relation numbers}
Let $q$ be a one-step relation number. By definition, there exist nonzero integers $m_1, m_2,m_3$ such that
\begin{equation}\label{eq: 1-step}
B_q^{m_1}A^{m_2}B_q^{m_3}=\begin{pmatrix}
    1+m_2m_3q&m_2\\
    q(m_1+m_3+m_1m_2m_3q)& 1+m_1m_2q
\end{pmatrix}\in U.
\end{equation}
Since $q\neq 0$, we have 
\[
q=-\tfrac{1}{m_1m_2}-\tfrac{1}{m_2m_3}.
\]
Substituting this expression for $q$ into equation~\eqref{eq: 1-step}, we obtain 
\[
\begin{pmatrix}
    -\tfrac{m_3}{m_1}& m_2\\
    0&-\tfrac{m_1}{m_3}
\end{pmatrix}\in U.
\]
By Theorem~\ref{Thm: characteristic S-arithmetic}, if
$m_1\neq m_3$, then $q\in\mathcal{CS}$. On the other hand, if $m_1=m_3$, then $q=-\tfrac{2}{m_1m_2}$, which is a rational number with numerator $1$ or $2$. Hence $q\in \mathcal{CS}$ by Corollary~\ref{Cor: t=kspm1}. 

We summarize the discussion in the following proposition.
\begin{proposition}\label{Prop: 1-step}
    The set of one-step relation numbers is
    \[
    \{\tfrac{1}{m}+\tfrac{1}{n}: m,n\in\mathbb{Z}\setminus\{0\}, m+n\neq 0\}.
    \]
    Moreover, all one-step relation numbers are in $\mathcal{CS}$.
\end{proposition}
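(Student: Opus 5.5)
The proof has two parts: identifying the set of one-step relation numbers, and showing each lies in $\mathcal{CS}$.

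\emph{The set.} Let $q$ be a one-step relation number with nonzero integers $m_1,m_2,m_3$. The lower-left entry of \eqref{eq: 1-step} must vanish, and $q\neq0$, so $m_1+m_3+m_1m_2m_3q=0$. This gives
\[
q=-\tfrac{1}{m_1m_2}-\tfrac{1}{m_2m_3}.
\]
Setting $m=-m_1m_2$ and $n=-m_2m_3$ puts $q$ in the form $\tfrac1m+\tfrac1n$. The condition $q\neq0$ forces $m+n\neq0$.

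For the converse, take nonzero $m,n$ with $m+n\neq0$. Choose $m_2=-1$, $m_1=m$, $m_3=n$. Then $q=\tfrac1m+\tfrac1n\neq0$ and $B_q^{m}A^{-1}B_q^{n}\in U$, so $q$ is an $l$-step relation number for some $l\le1$. Since $q\neq0$ and $0$ is the only $0$-step relation number, $q$ is exactly one-step.

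\emph{Membership in $\mathcal{CS}$.} Substituting the formula for $q$ back into \eqref{eq: 1-step} gives the upper-triangular matrix
\[
V=\begin{pmatrix}-\tfrac{m_3}{m_1}&m_2\\0&-\tfrac{m_1}{m_3}\end{pmatrix}\in U_q.
\]
There are two cases.

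If $m_3\neq \pm m_1$, the diagonal entry $-\tfrac{m_3}{m_1}$ is not $\pm1$. So $V\notin H$, and also $V\notin \pm H$, which rules out any sign ambiguity. Then $U_q\neq H$.

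If $m_3=-m_1$, the formula gives $q=0$, which is excluded.

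If $m_3=m_1$, then $q=-\tfrac{2}{m_1m_2}$ has numerator $s\in\{1,2\}$ in lowest terms. Either $t\equiv\pm1\pmod s$ trivially, or $s=2$ and $t$ is odd.

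Before invoking Theorem~\ref{Thm: characteristic S-arithmetic} or Corollary~\ref{Cor: t=kspm1}, the remaining hypotheses must be checked. These are $q\neq\pm3$, $|q|<4$, and for the corollary $(t^2-1)st\neq0$.

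Any one-step relation number satisfies $|q|\le2$, since $|\tfrac1m+\tfrac1n|\le 2$. So $q\neq\pm3$ and $q\in(-4,4)$ hold automatically.

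When $t=\pm1$, $q$ is an integer in $\{\pm1,\pm2\}$. These cases are covered by Case 1 of the proof of Theorem~\ref{Them: finite index}, where $\Gamma_q=\Gamma_1(|q|)$ classically; this uses $\Gamma_q=\Gamma_{-q}$.

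With these checks done, Theorem~\ref{Thm: characteristic S-arithmetic} handles the first case and Corollary~\ref{Cor: t=kspm1} handles the second.

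The main obstacle is this bookkeeping of the exceptional cases: the integer values of $q$ and the $\pm$ diagonal, which must be confirmed to lie outside $H$. Beyond that, the argument is the direct computation already displayed above the statement.
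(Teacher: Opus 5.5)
Your proof is correct and follows the paper's argument. You use the same upper-triangular element with diagonal entries $-\tfrac{m_3}{m_1}$ and $-\tfrac{m_1}{m_3}$, the case $m_1\neq m_3$ via Theorem~\ref{Thm: characteristic S-arithmetic}, and the case $m_1=m_3$ (numerator $1$ or $2$) via Corollary~\ref{Cor: t=kspm1}; your extra bookkeeping is sound and makes explicit what the paper leaves implicit, namely the converse inclusion via $m_2=-1$, the bound $|q|\le 2$, and the integral cases $q\in\{\pm1,\pm2\}$.
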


\subsection{Two-step relation numbers}
Let $q$ be a two-step relation number. By definition, there exist nonzero integers $m_1, m_2,m_3, m_4,m_5$ such that 
\begin{equation}\label{eq: Step 2}
B_q^{m_1}A^{m_2}B_q^{m_3}A^{m_4}B_q^{m_5}=\begin{pmatrix}
    1+f_{11}(q)&m_2+m_4+m_2m_3m_4q\\
    f_{21}(q)& 1+f_{22}(q)
\end{pmatrix}\in U,
\end{equation}
where
\begin{itemize}
    \item $f_{11}(q)=(m_2m_3+m_4m_5+m_2m_5)q+m_2m_3m_4m_5q^2$;
    \item $f_{22}(q)=(m_1m_4+m_1m_2+m_3m_4)q+m_1m_2m_3m_4q^2$;
    \item $f_{21}(q)=(m_1+m_3)q+m_1m_2m_3q^2+m_5q(1+f_{22}(q))$.
\end{itemize} 
Equation~\eqref{eq: Step 2} will be the main topic of this section and the next. For convenience, we call this element $U(q;m_1,m_2,m_3,m_4,m_5)$.

The condition that this element belongs to $H$ is equivalent to 
\begin{enumerate}
    \item $f_{11}(q)=f_{22}(q)=f_{21}(q)=0$.
    \item\label{Cond: 2} $m_2+m_4+m_2m_3m_4q\in \mathbb{Z}$.
\end{enumerate}
By $f_{11}(q)=0$, we see that 
\[
q=-\left(\tfrac{1}{m_2m_3}+\tfrac{1}{m_3m_4}+\tfrac{1}{m_4m_5}\right).
\]
Substituting this value of \(q\) into \(f_{22}(q)=f_{21}(q)=0\), one obtains
\[
m_1m_2=m_4m_5.
\]
Moreover, condition~\eqref{Cond: 2} becomes
\[
m_2+m_4-m_2-m_4-\tfrac{m_2m_3m_4}{m_4m_5}
\in \mathbb Z.
\]
Using \(m_1m_2=m_4m_5\), this is equivalent to
\[
\tfrac{m_3m_4}{m_1}\in\mathbb Z,
\]
that is, \(m_1\mid m_3m_4\).

In summary, we have shown the following.

\begin{proposition}\label{prop: 2-step}
Let \(q\) be a two-step relation number, and let \(m_i\) be as above. Then\[
q\in\mathcal{CS}\]
if either
\[m_1m_2\neq m_4m_5
\qquad\text{or}\qquad
m_1\nmid m_3m_4.
\]
\end{proposition}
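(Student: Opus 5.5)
The plan is to show that, under either hypothesis, the element
\[
V=U(q;m_1,m_2,m_3,m_4,m_5)=B_q^{m_1}A^{m_2}B_q^{m_3}A^{m_4}B_q^{m_5}
\]
lies in $U_q\setminus H$, and then to apply Theorem~\ref{Thm: characteristic S-arithmetic}. By hypothesis $V\in U$, and it is a word in $A,B_q$, so $V\in U_q$. The only remaining issues are whether $V\in H$, and the exceptional parameters $q=0,\pm3$.

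For the first issue I would use the computation preceding the proposition. $V\in H$ exactly when $V=A^k$ for some $k\in\mathbb Z$, i.e. when $f_{11}(q)=f_{22}(q)=f_{21}(q)=0$ and the upper-right entry is an integer. This uses the fact that the diagonal of $H$ is $1$ (not $-1$); if one prefers to allow $\pm$, note that $-A^k\notin H$ anyway, so it still lies in $U_q\setminus H$.

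Suppose $V\in H$. From $f_{11}(q)=0$ and $q\neq0$ we get
\[
q=-\left(\tfrac{1}{m_2m_3}+\tfrac{1}{m_3m_4}+\tfrac{1}{m_4m_5}\right).
\]
Substituting into $f_{22}=0$ gives $m_1m_2=m_4m_5$; substituting into $f_{21}=0$ is then automatic or consistent. With $m_1m_2=m_4m_5$, the upper-right entry reduces to $-m_3m_4/m_1$, whose integrality is $m_1\mid m_3m_4$.

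So $V\in H$ forces both $m_1m_2=m_4m_5$ and $m_1\mid m_3m_4$. Contrapositively, under the hypothesis of Proposition~\ref{prop: 2-step} we get $V\notin H$, hence $U_q\neq H$. The step I would check most carefully is the substitution showing $f_{22}(q)=0$ is equivalent to $m_1m_2=m_4m_5$. Since $f_{11}$ and $f_{22}$ are related by the symmetry $(m_1,m_2,m_3,m_4,m_5)\leftrightarrow(m_5,m_4,m_3,m_2,m_1)$, the difference $f_{11}-f_{22}$ should factor with a factor $(m_1m_2-m_4m_5)$, and I would verify this directly.

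It remains to exclude $q=\pm3$ (and $q=0$ is excluded by definition). For $q=\pm3$ the paper notes $U_q=H$, so no element of $U_q\setminus H$ exists at all. Therefore, if $q=\pm3$ were such a two-step relation number, the $V$ above would have to lie in $H$, contradicting what we just showed. So $q\neq\pm3$ automatically. Then Theorem~\ref{Thm: characteristic S-arithmetic} gives $\Gamma_q=\Gamma_1^{(t)}(s)$. Finally, $q\in(-4,4)$ holds because Brenner's theorem makes $\Gamma_q$ free for $|q|\ge4$, which would force $U_q=H$. Hence $q\in\mathcal{CS}$.
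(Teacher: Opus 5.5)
Your proposal is correct and is essentially the paper's argument. The computation preceding the proposition shows that $U(q;m_1,\dots,m_5)\in H$ forces both $m_1m_2=m_4m_5$ and $m_1\mid m_3m_4$, so under either hypothesis this word is an element of $U_q\setminus H$ and Theorem~\ref{Thm: characteristic S-arithmetic} applies.

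Your explicit exclusion of $q=\pm3$ (via $U_{\pm3}=H$) and of $|q|\ge4$ (via Brenner) fills in points the paper leaves tacit; both arguments also tacitly need $q\in\mathbb Q$. One small correction: $f_{11}-f_{22}$ is not divisible by $m_1m_2-m_4m_5$ as a polynomial. That factor only appears after substituting the root of $f_{11}$, which is exactly what your substitution does.
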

The following result is proved in \cite{MR1420342}. Let \(a,k,N\) be positive integers and let
\(\varepsilon_1,\varepsilon_2\in\{\pm1\}\) satisfy \(ka+\varepsilon_1\neq0\). Suppose that
\(s,t\) are nonzero integers satisfying the quadratic Diophantine equation
\begin{equation}\label{eq: quadratic in TT}
    at^2-(ka+\varepsilon_1)Ns^2=\varepsilon_2.
\end{equation}
Then $\tfrac{s^2}{t^2}$ is a relation number of step at most two, with
\[
m_1=\varepsilon_1\varepsilon_2t^2,\qquad
m_2=\varepsilon_1(ka+\varepsilon_1)N,\qquad
m_3=k,
\]
\[
m_4=-N,\qquad
m_5=-\varepsilon_2(ka+\varepsilon_1)t^2.
\]

Observe that
\[
m_1m_2=m_4m_5.
\]
Moreover, equation~\eqref{eq: quadratic in TT} implies that
\(\gcd(t^2,N)=1\). Hence the condition
\[
m_1\nmid m_3m_4
\]
is equivalent to
\[
t^2\nmid k.
\]
Therefore, Proposition~\ref{prop: 2-step} yields
\[
\tfrac{s^2}{t^2}\in\mathcal{CS}
\]
whenever \(t^2\nmid k\).

It remains to consider the case \(t^2\mid k\). Writing
\(k=t^2k'\), equation~\eqref{eq: quadratic in TT} becomes
\[
(a+k'aNs^2)t^2-\varepsilon_1Ns^2=\varepsilon_2,
\]
which is of the same form with \(k=0\). Hence
\(\tfrac{s^2}{t^2}\) is in fact a one-step relation number. By
Proposition~\ref{Prop: 1-step}, it follows that
\[
\tfrac{s^2}{t^2}\in\mathcal{CS}.
\]

We conclude that every relation number constructed in \cite{MR1420342}
belongs to \(\mathcal{CS}\).
\section{Constructing unipotent elements from quadratic forms}\label{Sec: quadratic forms to unipotents}

In this section, we prove Theorems~\ref{thm: quadratic form}
and~\ref{thm:Pell-equation}. The main idea is to determine when
equation~\eqref{eq: Step 2} produces an element of $U\setminus H$.
For the first theorem, we construct an upper-triangular element with
nontrivial diagonal entries. For the second theorem, we show directly
that the element produced by equation~\eqref{eq: Step 2} is upper
triangular.

\subsection{Proof of Theorem~\ref{thm: quadratic form}}

We begin with the following observation.

\begin{lemma}\label{lemma: 1/q^2}
Let
\[
q=\tfrac{s}{t}\in\mathbb Q\setminus\mathbb Z,
\qquad
\gcd(s,t)=1, \quad t>1,
\]
and suppose that there exist integers $m_1,m_2,m_3,m_4,m_5$ such that
\[
1+f_{22}(q)=\pm\tfrac{1}{t^2}.
\]
Then $q\in\mathcal{CS}$.
\end{lemma}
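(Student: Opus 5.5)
The plan is to exhibit an explicit element of $U_q\setminus H$ and then invoke Theorem~\ref{Thm: characteristic S-arithmetic}. Since $q\notin\mathbb Z$, we have $q\neq 0,\pm3$, so the criterion applies once such an element is found.

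First, set
\[
W=B_q^{m_1}A^{m_2}B_q^{m_3}A^{m_4}\in\Gamma_q .
\]
Right multiplication by $B_q^{m_5}$ adds $m_5q$ times the second column to the first column and leaves the second column unchanged. Hence the $(2,2)$ entry of $W$ equals the $(2,2)$ entry of $U(q;m_1,\dots,m_5)$, namely $d:=1+f_{22}(q)=\pm\tfrac1{t^2}$. Write
\[
W=\begin{pmatrix} a&b\\ c&d\end{pmatrix}.
\]
Multiplying out the four factors, each entry of $W$ is a polynomial in $q$ of degree at most $2$ with integer coefficients. Moreover, every monomial in $c$ contains at least one factor $q$, since $c$ comes from the lower unipotent factors. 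So $c=\alpha q+\beta q^2$ with $\alpha,\beta\in\mathbb Z$, i.e.
\[
c=\tfrac{s(\alpha t+\beta s)}{t^2}.
\]

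Next, kill the lower-left entry by a further power of $B_q$. For $k\in\mathbb Z$, the $(2,1)$ entry of $WB_q^{k}$ is $c+kqd$. Solving $c+kqd=0$ gives
\[
k=-\tfrac{c}{qd}=\mp\tfrac{c\,t^3}{s}=\mp t(\alpha t+\beta s).
\]
This $k$ is an integer; this divisibility check is the only real point to verify, and the explicit form of $c$ above handles it. Then $V:=WB_q^{k}\in\Gamma_q$ is upper triangular with $(2,2)$ entry $d=\pm t^{-2}$. Since $t>1$, $d\neq\pm1$, so $V\notin H$ (every element of $H$ has diagonal entries $1$). Thus $U_q\neq H$.

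Finally, conclude with Theorem~\ref{Thm: characteristic S-arithmetic} that $\Gamma_q=\Gamma_1^{(t)}(s)$. The range condition in the definition of $\mathcal{CS}$ remains to be checked. If $|q|\ge4$, Brenner's theorem says $\Gamma_q$ is free, so $U_q=H$, which contradicts the existence of $V$. Hence $|q|<4$ and $q\in\mathcal{CS}$. The main obstacle is only the integrality of $k$, i.e.\ controlling the denominator of $c$. If the degree bound on $c$ turned out to be wrong, I would instead use the congruence $c\equiv_t 0\pmod s$, which follows from $\Gamma_q\le\Gamma_1^{(t)}(s)$, and replace $k$ by a suitable multiple after taking a power of $W$.
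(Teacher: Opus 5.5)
Your proposal is correct and is essentially the paper's argument. Both proofs right-multiply a word in $A,B_q$ by a suitable integer power of $B_q$ to clear the $(2,1)$ entry; this is possible because that entry is an integer polynomial in $q$ with no constant term, and what remains is an element of $U_q$ with $(2,2)$ entry $\pm t^{-2}\neq\pm1$. The differences are cosmetic: you drop the final $B_q^{m_5}$, so the entry to be cleared has denominator $t^2$ rather than $t^3$ (the paper uses $U(q;m_1,\dots,m_5)B_q^{\mp k}$ with $f_{21}(q)=sk/t^3$), and you check $|q|<4$ explicitly via Brenner, which the paper leaves implicit.
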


\begin{proof}
Since $f_{21}(q)$ is a cubic polynomial in $q$ and
$\Gamma_q\leq \Gamma_1^{(t)}(s)$,
there exists $k\in\mathbb Z$ such that
$f_{21}(q)=\tfrac{sk}{t^3}$.
Consequently,
\[
U(q;m_1,m_2,m_3,m_4,m_5)
\begin{pmatrix}
1&0\\
\mp kq&1
\end{pmatrix}
=
\begin{pmatrix}
*&*\\
0&\pm\tfrac{1}{t^2}
\end{pmatrix}.
\]
Because $t>1$, the resulting upper-triangular matrix has nontrivial
diagonal entries and therefore does not belong to $H$. Hence $U_q\neq H$.
Theorem~\ref{Thm: characteristic S-arithmetic} now implies that
$q\in\mathcal{CS}$.
\end{proof}

\begin{proof}[Proof of Theorem~\ref{thm: quadratic form}]
Let $A,B,C$ be as in the statement of the theorem, and let $s,t$ be
integers satisfying
\[
ABs^2+(A+B+C)st+t^2=\pm1.
\]

If $t=1$, then $q$ is integral and the conclusion follows from the
integral case. We may therefore assume that $t>1$.

Set
\[
d=\gcd(A,C)
\]
and define
\[
m_1=d,
\qquad
m_2=\tfrac{A}{d},
\qquad
m_3=\tfrac{Bd}{C},
\qquad
m_4=\tfrac{C}{d}.
\]
The number $m_3$ is an integer. Indeed, writing
\[
A=dA',
\qquad
C=dC',
\qquad
\gcd(A',C')=1,
\]
the assumption $C\mid AB$ implies that $C'\mid B$.

For any choice of $m_5$, we have
\[
m_1m_2m_3m_4=AB
\]
and
\[
m_1m_2+m_1m_4+m_3m_4=A+B+C.
\]
Therefore,
\[
m_1m_2m_3m_4s^2
+
(m_1m_2+m_1m_4+m_3m_4)st
+
t^2
=
\pm1.
\]
Dividing by $t^2$ and setting
$q=\tfrac{s}{t}$,
we obtain
\[
1+f_{22}(q)=\pm\tfrac{1}{t^2}.
\]
Lemma~\ref{lemma: 1/q^2} therefore gives
$q\in\mathcal{CS}$.
\end{proof}

\subsection{Proof of Theorem~\ref{thm:Pell-equation}}
Theorem~\ref{thm:Pell-equation} is proved by a more detailed analysis of equation~\eqref{eq: Step 2}. 

\begin{proof}[Proof of Theorem~\ref{thm:Pell-equation}]
Note that
\[
U(q;m_1,m_2,m_3,m_4,m_5)\in U
\]
if and only if \(f_{21}(q)=0\). Since \(q\) is a factor of
\(f_{21}(q)\), to find nonzero relation numbers it is enough to
consider the quadratic equation
\[
m_1m_2m_3m_4m_5q^2
+
\kappa q
+
m_1+m_3+m_5
=
0,
\]
where
\[
\kappa
=
m_1m_2m_3
+
m_1m_2m_5
+
m_1m_4m_5
+
m_3m_4m_5.
\]

This equation has rational roots if and only if its discriminant is a
square in \(\mathbb Q\). Since the discriminant is difficult to
analyze in its present form, we impose suitable relations among the
parameters \(m_i\).

Let \(k\neq 0,-1\) be an integer, and impose the constraints
\begin{equation}\label{eq: constraints}
m_1=km_3,
\qquad
m_2=k+1,
\qquad
m_4=k.
\end{equation}
Substituting these relations into the quadratic equation gives
\[
k^2(k+1)m_3^2m_5q^2
+
\bigl(
k(k+1)m_3^2
+
2k(k+1)m_3m_5
\bigr)q
+
(k+1)m_3+m_5
=
0.
\]
The discriminant of this equation is
\[
\Delta
=
k^2(k+1)^2m_3^2
\left(
m_3^2+\tfrac{4k}{k+1}m_5^2
\right).
\]

Let \(u,v\in\mathbb Z\) satisfy
\[
(k+1)u^2-kv^2=1.
\]
Set
\[
m_3=1
\qquad\text{and}\qquad
m_5=(k+1)uv.
\]
By Equation~\eqref{eq: constraints}, we then have
\begin{equation}\label{eq: mi}
m_1=k,
\qquad
m_2=k+1,
\qquad
m_3=1,
\qquad
m_4=k,
\qquad
m_5=(k+1)uv.
\end{equation}
Moreover,
\[
\Delta
=
k^2(k+1)^2
\left(
(k+1)u^2+kv^2
\right)^2.
\]
Therefore, the solutions of \(f_{21}(q)=0\) are \(q=0\) and
\[
q_\pm(u,v,k)
=
\tfrac{
-k(k+1)\bigl(1+2(k+1)uv\bigr)
\pm
k(k+1)\bigl((k+1)u^2+kv^2\bigr)
}{
2k^2(k+1)^2uv
}.
\]

Using the Pell equation, these two roots can be rewritten as
\[
q_+(u,v,k)
=
\tfrac{kv-(k+1)u}{k(k+1)u}
\]
and
\[
q_-(u,v,k)
=
-\tfrac{u+v}{kv}.
\]
The first root cannot vanish. Indeed, if
\[
kv=(k+1)u,
\]
then, since \(\gcd(k,k+1)=1\), there exists \(r\in\mathbb Z\) such
that
\[
u=kr,
\qquad
v=(k+1)r.
\]
Substitution into the Pell equation gives
\[
-k(k+1)r^2=1,
\]
which is impossible for \(k\neq0,-1\). On the other hand,
\(q_-(u,v,k)=0\) implies \(u=-v\), and the Pell equation then gives
\(u^2=1\), so \(uv=-1\). Consequently, the assumption \(uv\neq-1\)
implies that
\[
q_\pm(u,v,k)\neq0.
\]

Thus \(q_\pm(u,v,k)\) is a relation number of step at most two. If it
is a one-step relation number, Proposition~\ref{Prop: 1-step} implies
that
\[
q_\pm(u,v,k)\in\mathcal{CS}.
\]
Otherwise, it is a two-step relation number. Since
\[
m_1m_2=k(k+1)
\qquad\text{and}\qquad
m_4m_5=k(k+1)uv,
\]
the assumption \(uv\neq1\) gives
\[
m_1m_2\neq m_4m_5.
\]
Proposition~\ref{prop: 2-step} therefore implies that
\[
q_\pm(u,v,k)\in\mathcal{CS}.
\]

Finally, along a sequence of solutions for which \(u\to\infty\), the
Pell equation gives
\[
\tfrac{v^2}{u^2}
=
\tfrac{k+1}{k}-\tfrac{1}{ku^2}.
\]
Hence
\[
\tfrac{u}{v}
\longrightarrow
\pm\sqrt{\tfrac{k}{k+1}}.
\]
The result follows by a direct computation.
\end{proof}

\section{Proof of Theorem~\ref{Thm: small denominators}}\label{Sec: proof of theorem E}
We prove Theorem~\ref{Thm: small denominators} following the strategy of \cite{MR4505367}. Theorem~\ref{Thm: characteristic S-arithmetic} reduces the verification of $q\in \mathcal{CS}$ to the construction of a nontrivial element of $U_q$. The algorithm of Kim and Koberda provides a systematic way to produce words witnessing non-freeness. We show that these words satisfy the stronger conclusion required by Theorem~\ref{Thm: characteristic S-arithmetic}, thereby converting the non-freeness criterion of \cite{MR4505367} into an arithmeticity criterion.

Let us start by recalling the algorithm. 

For $x,y\in\mathbb{Z}$, define $(x;y)=\{x+ky: k\in\mathbb{Z}\}$.
\begin{defn}\cite[Definition 3.3]{MR4505367}\label{Def: s-good}
Let $s,w,m$ be nonzero integers such that $s>1$, and let
\[
D:=\gcd(w,sm);\quad d:=\gcd(w,s).
\]
We say that $(w;sm)\subseteq\mathbb Z$ is an \textbf{$s$-good residue class} if there exists an integer $y$ satisfying the following two conditions:
\begin{itemize}
    \item $yD\mid md$;
    \item $w\mid smy\pm D$.
\end{itemize}
In this case, $w$ is called a \textbf{good representative} of $(w;sm)$.
\end{defn}
Using the same strategy, we obtain the following variant of \cite[Lemma 3.5]{MR4505367}.  The significance of the lemma is that an $s$-good residue class produces an explicit element of $U_q\setminus H$, allowing us to apply Theorem~\ref{Thm: characteristic S-arithmetic}. Note that equation~\eqref{eq: s-good implies CS} is a reformulation of the argument used in the proof of \cite[Lemma 3.2]{MR4505367}.

\begin{lemma}\label{lemma: s-good to CS}
Let $(w;sm)$ be an $s$-good residue class with good representative $w$. Then
\[
\tfrac{s}{t}\in \mathcal{CS}
\]
for every $t\in (w;sm)$ that does not divide $w$.
\end{lemma}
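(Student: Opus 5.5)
The plan is to produce, for every $t\in(w;sm)$ with $t\nmid w$, an explicit element of $U_q\setminus H$ with $q=\tfrac{s}{t}$, and then to invoke Theorem~\ref{Thm: characteristic S-arithmetic}. First the exceptional parameters have to be ruled out. Since $t\nmid w$, we have $t\neq\pm1$. Hence $q\notin\mathbb Z$, and in particular $q\neq0,\pm3$, so the theorem applies. Replacing $t$ by $-t$ if necessary (recall $\Gamma_q=\Gamma_{-q}$), we may assume $t>1$. We write $t=w+ksm$ with $k\in\mathbb Z$.

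Next I would use the integer $y$ from Definition~\ref{Def: s-good} to write down a short word in $A$ and $B_q$, following the proof of \cite[Lemma~3.2]{MR4505367}. The word should be of the form $B_q^{m_1}A^{m_2}B_q^{m_3}$, or the two-step element $U(q;m_1,\dots,m_5)$ of equation~\eqref{eq: Step 2}. The exponents are to be chosen as explicit integer expressions in $w,m,y,D,d$ and $k$. The divisibility $yD\mid md$ is what makes the exponents integral, and the condition $w\mid smy\pm D$ is what makes the lower-left entry vanish. The heuristic is that $t\equiv w\pmod{sm}$, so multiplying the lower-left entry by a power of $t$ gives an integer polynomial in $t$. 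This polynomial can be arranged to factor through the relation $smy\pm D\equiv0\pmod w$ after substituting $t=w+ksm$. I would first look for exponents for which the lower-left entry is exactly a multiple of $(smy\pm D)/w$ minus an integer, and then cancel it as in the displayed equation~\eqref{eq: s-good implies CS}.

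Once an element $V=\begin{pmatrix}a&b\\0&a^{-1}\end{pmatrix}\in\Gamma_q$ is obtained, I would compute its diagonal entry. In a one-step word the diagonal entry is $-m_3/m_1$, and similar rational expressions arise in the two-step case. I expect it to be a ratio such as $\pm t/w$ up to factors of $D$, or more generally a ratio whose numerator involves $t$ and whose denominator divides $w$. The hypothesis $t\nmid w$ then shows that $a\neq\pm1$, since otherwise $t$ would divide $w$. So $V\notin H$, hence $U_q\neq H$, and Theorem~\ref{Thm: characteristic S-arithmetic} gives $\Gamma_q=\Gamma_1^{(t)}(s)$, that is, $\tfrac st\in\mathcal{CS}$.

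Alternatively, if the diagonal entry turns out to be $1$, I would check that the upper-right entry is non-integral. That also gives $V\notin H$. If the lower-left entry is only forced to lie in $s\mathbb Z[\tfrac1t]$ rather than to vanish, I would correct it by a power of $B_q$, exactly as in the proof of Lemma~\ref{lemma: 1/q^2}.

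The main obstacle is the middle step: pinning down exponents that make the lower-left entry vanish uniformly for every $t$ in the residue class, and then verifying that the resulting diagonal entry is not $\pm1$ precisely when $t\nmid w$. The remaining steps are formal consequences of Theorem~\ref{Thm: characteristic S-arithmetic}.
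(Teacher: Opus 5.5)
\paragraph{A genuine gap.} Your framework matches the paper's: exhibit an element of $U_q\setminus H$ built from a short word, then apply Theorem~\ref{Thm: characteristic S-arithmetic}. Excluding $q=0,\pm3$ is also fine, with one correction. The real reason $q\notin\mathbb Z$ is that $\gcd(s,t)=\gcd(w,s)=d$ divides $w$, so $t=\pm d$ is excluded; $t\neq\pm1$ alone does not suffice when $\gcd(s,t)>1$.

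The problem is that your proposal stops exactly where the content of the lemma begins. You never write down the word, never check that its lower-left entry vanishes, and never verify that the element lies outside $H$. You flag this yourself as the main obstacle, so this is a missing central step, not routine detail. Your guess about where the non-triviality lives is also misdirected. In the construction that works, the element is unipotent, with both diagonal entries equal to $1$. Everything rests on the upper-right entry being non-integral, and that needs a coprimality argument your proposal does not contain.

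\paragraph{What is missing.}
\begin{enumerate}
\item \emph{Normalize.} Note that $D\mid t$. Set $w'=w/D$, $s'=s/d$, $m'=md/D$ and $t'=t/D$. Then $\gcd(w',s'm')=1$, and the $s$-good conditions become $y\mid m'$ and $w'\mid s'm'y\pm1$.
\item \emph{Bézout identity.} Put $u_1=(1\pm s'm'y)/w'$, $u_2=(t'-w')/(s'm')\in\mathbb Z$ and $c=\mp y-u_1u_2$. These satisfy $t'u_1+s'm'c=1$.
\item \emph{The word.} Take
\[
B_{q'}^{-t'c}\,A^{m'}\,B_{q'}^{-u_2}\,A^{-m'c/(\mp y)}\,B_{q'}^{\mp t'y},\qquad q'=\tfrac{s'}{t'}.
\]
Its exponents are integral; this is where both $s$-good conditions are used. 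The single identity $t'u_1+s'm'c=1$ makes $f_{11}$, $f_{22}$ and $f_{21}$ in \eqref{eq: Step 2} all vanish.
\item \emph{Non-integrality.} The upper-right entry $m_2+m_4+m_2m_3m_4q'$ equals $\tfrac{(m'/y)\,c\,w'}{t'}$ up to an integer and a sign. The same identity gives $\gcd(m'c,t')=1$. Hence the entry is non-integral exactly when $t'\nmid w'$, that is, when $t\nmid w$.
\item \emph{Transfer.} Since $\tfrac{s}{t}=q'/(D/d)$, the conclusion passes from $q'$ to $\tfrac st$ by Corollary~\ref{cor: /n}. Equivalently, $B_{q'}=B_{s/t}^{D/d}$, so $\Gamma_{q'}\le\Gamma_{s/t}$.
\end{enumerate}

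Your plan contains none of these steps. Your fallback of checking the upper-right entry cannot be carried out without them.
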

\begin{proof}
  Let $D$ and $d$ be as in Definition~\ref{Def: s-good}. Set $w'=\tfrac{w}{D}$, $s'=\tfrac{s}{d}$ and $m'=\tfrac{md}{D}$. Suppose that $t\in (w;sm)$. Define $t'=\tfrac{t}{D}$ and $q'=\tfrac{s'}{t'}$. By the $s$-good hypothesis, there exists $y\in \mathbb{Z}$ such that
  \[
y\mid m', \quad w'\mid s'm'y\pm 1.
  \]
  Moreover, $t'\nmid w'$.

  Put $u_1=\tfrac{1\pm s'm'y}{w'}$ and $u_2=\tfrac{t'-w'}{s'm'}$. A direct computation shows that 
  \begin{equation}\label{eq: s-good implies CS}
  B_{q'}^{-t'(\mp y-u_1u_2)}A^{m'}B_{q'}^{-u_2}A^{-\tfrac{m'(\mp y-u_1u_2)}{\mp y}}B_{q'}^{\mp t'y}=\begin{pmatrix}
      1 & \tfrac{m'(\mp y-u_1u_2)w'}{\mp t'y}\\
      0&1
  \end{pmatrix}\in \Gamma_{q'}.
  \end{equation}
By definition, 
\[
t'u_1+s'm'(\mp y-u_1u_2)=1.
\]
Therefore, $\gcd(m'(\mp y-u_1u_2), t')=1$. Since $t'\nmid w'$, it follows that 
\[\tfrac{m'(\mp y-u_1u_2)w'}{\mp t'y}\notin \mathbb{Z}.\] 
Hence $q'\in\mathcal{CS}$ by
Theorem~\ref{Thm: characteristic S-arithmetic}. Since
\[
\tfrac{s}{t}=\tfrac{s'}{t'}\tfrac{d}{D},
\]
Corollary~\ref{cor: /n} implies that
$\tfrac{s}{t}\in\mathcal{CS}$.
\end{proof}
The main step in \cite{MR4505367} is the following lemma.
\begin{lemma}\cite[Lemma 4.3]{MR4505367}\label{lemma: enough s-good}
    Suppose that an integer $s$ satisfies $2\leq s\leq 27$ and $s\neq 24$. Then there exists a family of $s$-good residue classes
    \[
    \{(w_i,sm_i)\}
    \]
    whose union contains all integers that are relatively prime to $s$; moreover, we can require that $m_i\mid 60$ and $0\leq w_i< sm_i$. 
\end{lemma}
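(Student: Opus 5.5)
The plan is to reduce the statement to a finite computation for each fixed $s$ and then carry out that computation exhaustively, as in \cite{MR4505367}.

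\textbf{Reduction to finitely many residues.} Fix $s$ with $2\le s\le 27$ and $s\neq 24$. Every class allowed by the statement has the form $(w;sm)$ with $m\mid 60$, so it is a union of residue classes modulo $L:=60s$. It therefore suffices to find, for each residue $r\in\{0,\dots,L-1\}$ with $\gcd(r,s)=1$, a divisor $m\mid 60$ and a representative $w\equiv r\pmod{sm}$ with $0\le w<sm$ such that $(w;sm)$ is $s$-good with good representative $w$. For such $r$, $m$ and $w$ are forced: $w$ is the least nonnegative residue of $r$ modulo $sm$. So for each $r$ there are only the twelve candidates $m\in\{1,2,3,4,5,6,10,12,15,20,30,60\}$ to test.

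\textbf{Deciding $s$-goodness.} For a fixed candidate $(w;sm)$, compute
\[
D=\gcd(w,sm),\qquad d=\gcd(w,s).
\]
The condition $yD\mid md$ forces $y$ to be a divisor, up to sign, of $md/D$. Hence $y$ ranges over a finite, explicitly listable set. For each such $y$, test whether
\[
w\mid smy+D\qquad\text{or}\qquad w\mid smy-D .
\]
This gives a terminating procedure that decides whether $w$ is a good representative. The case $w=0$ cannot arise, since $\gcd(r,s)=1$ and $s>1$.

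\textbf{Assembling the cover.} For each $s$, run over the $r$ coprime to $s$ and record the first $m$ that succeeds. The recorded classes then form the required family, since their union contains every integer coprime to $s$. As a sanity check, the residues $r\equiv\pm1\pmod s$ are always covered with $m=1$ or small $m$, taking $y$ so that $w\mid smy\pm D$ trivially; this mirrors Corollary~\ref{Cor: t=kspm1}.

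\textbf{Main obstacle.} The only genuine difficulty is that the finite search must actually succeed for every residue. There is no uniform argument for this. It is the content of the computation in \cite{MR4505367}, and it is precisely where $s=24$ fails with $m\mid 60$. If a residue resists all $m\mid 60$ for some $s$ in the range, I would first try enlarging the modulus to a multiple of $60$ to see whether a finer class becomes good; the claimed statement asserts that this is never needed. In practice, I would present the lemma's proof as a table, per $s$, of pairs $(w_i,m_i)$ together with the witnessing $y_i$ and the sign, so that each entry can be checked by hand against Definition~\ref{Def: s-good}.
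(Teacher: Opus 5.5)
Your overall route, reducing to a finite check and relying on the computation in \cite{MR4505367}, is the paper's. But the check you set up is not the one Kim--Koberda performed, and it fails.

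The error is the sentence ``$m$ and $w$ are forced: $w$ is the least nonnegative residue of $r$ modulo $sm$'', followed by testing goodness of that particular $w$. In Definition~\ref{Def: s-good}, $D$ and $d$ are unchanged under $w\mapsto w+ksm$, but the condition $w\mid smy\pm D$ is not. So being a good representative is not a property of the class.

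The lemma only asserts that the classes are $s$-good, i.e.\ that each admits some good representative. The normalization $0\le w_i<sm_i$ is just the canonical labelling of the class, which is all the \texttt{Mod} step cited in the paper's proof does. The good representative may lie outside $[0,sm_i)$. The appendix uses exactly such classes: $(-2;7)=(5;7)$, where $5\nmid 7\pm1$, and $(-5;26)=(21;26)$, where $21\nmid 26y\pm1$ for every $y\mid 2$.

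Concretely, take $s=13$ and $r=307$. For $m=1,2,3,4,5,6,10,12,15,20,30,60$, the least nonnegative residues of $307$ modulo $13m$ are $8,21,34,47,47,73,47,151,112,47,307,307$. In every case $D=d=1$, and $w\nmid 13my\pm1$ for every divisor $y$ of $m$. Equivalently, the residues of $\pm(13m)^{-1}$ in $[1,w)$ are never divisors of $m$.

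So your search gets stuck at this residue. Read your way, with $w_i$ itself required to be a good representative in $[0,sm_i)$, the statement would actually be false for $s=13$. Your fallback of enlarging the modulus leaves the range $m\mid 60$. Yet $307\in(-5;26)$ and $-5\mid 26-1$, so the Kim--Koberda covering does contain it.

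The repair is to test, for each $r$ and $m$, every representative $w\equiv r\pmod{sm}$. This is still a finite search: for $r$ coprime to $s$, $w$ must divide the nonzero integer $smy\pm D$ with $|y|\le m$ and $D\le m$, so $|w|\le sm^2+m$. With that change your procedure does decide the lemma.

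The table you propose should then record the actual good representative, not just the label. Lemma~\ref{lemma: s-good to CS} excludes the divisors of that representative, not of the label. This is why the appendix treats $q=\tfrac{13}{5}$ for the class $(21;26)$.
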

\begin{proof}
     All statements other than the additional requirement
$0\leq w_i<sm_i$ are proved in \cite{MR4505367}. The reduction to the range $0\leq w_i< sm_i$ is implemented in the proof code accompanying \cite[Proposition B.1]{MR4505367}: the command $wp = Mod[w, s m]$ replaces each residue class by its canonical representative modulo $sm_i$. See line 17 on page 3 of the \href{https://arxiv.org/src/1901.06375v4/anc/relnum-v2.pdf}{ancillary file} for \cite{MR4505367}.
\end{proof}

Lemmas~\ref{lemma: s-good to CS} and \ref{lemma: enough s-good} together show that, for a fixed value of $s$, it suffices to verify a finite collection of exceptional parameters. The list of $s$-good residue classes with $m\neq 1$ is given on pages 395--396 of the \href{https://arxiv.org/src/1901.06375v4/anc/relnum-v2.pdf}{ancillary file} accompanying \cite{MR4505367}. The required verifications are carried out in Appendix~\ref{Appen: verify}, completing the proof of Theorem~\ref{Thm: small denominators}.

\appendix
\section{Relations of rational numbers}\label{Appen: verify}
In this appendix, we verify the remaining cases required for Theorem~\ref{Thm: small denominators}. By Lemmas~\ref{lemma: s-good to CS} and \ref{lemma: enough s-good}, it suffices to check a finite collection of exceptional rational parameters arising from the $s$-good residue classes of \cite{MR4505367}.

Before proceeding, we make three observations. First, if $t\equiv \pm1 \pmod{s}$, then the result follows from Corollary~\ref{Cor: t=kspm1}. We therefore omit these cases. In particular, the result is already known for $|s|=1,2,3,6$.

Second, if $(w;sm)$ is an $s$-good residue class, then so is $(-w;sm)$. Moreover, $w$ is a good representative if and only if $-w$ is also
a good representative. For example,
$(17;24)=(-5;24)$.
Thus, once we have shown that $\tfrac{12}{5}\in \mathcal{CS}$, there is no need to consider $\tfrac{12}{17}$.

Third, for a fixed value of $s$, we list the representatives $w$ in increasing order. When testing a given $w$, all of its proper divisors have already been considered. Consequently, it suffices to verify the condition for $w$ itself. When there is more than one good representative, we take the one with the shortest sequence.

The exceptional numbers arising from $s$-good classes with $m>1$ are listed on page 400 of the \href{https://arxiv.org/src/1901.06375v4/anc/relnum-v2.pdf}{ancillary file} for \cite{MR4505367}. Note that some classes are repeated. For example, $(7;36)=(43;36)$, so $\tfrac{18}{7}\in \mathcal{CS}$ implies $\tfrac{18}{43}\in \mathcal{CS}$. In the following list, we ignore these repeats. 

In the following list, $B$ always stands for the matrix $B_q$. The list is extracted from the computation in \cite{MR4505367}, along with additional information. See the Sage code below. 

Some words produced by the algorithm do not fit on a single line. In other cases, the Sage code fails to terminate due to the repetition of a pair of numbers. For these words, we search for alternative words by brute force whenever possible.

ll of the following equations have been verified using \href{https://sagecell.sagemath.org/?z=eJyNWE1v20gSvRvwf-ghEIAaSxb7mzRWB2kxhznsAsFk5hCNHGgsOmFWoWxRXicI9N-3qj_YzZaUWQGJJHb1q1dVr6pbnpMZ-bI-7Juv-du3Y7Jc0jGhK_hQ4PtqdH31K1g0m7o9NIdvHyJTBmvXV5v6kSzy59Hd9RWB174-vOzbE8QCEZ89ot21r9ebD_XXp_x1TBq_vyE3M0Ld50fyumxWZDYj2ffMGeDrMzB6vW3aTf01z44Zbg-LjsH79zlsvqF3n1ejMey48ai4uYkcfF6RpiXZzWToIdB4_dRsa3jyD7Kt2_x1RNbtxuy7bbpN87E55KMLOwdUHBEf_dN639UfXnf7DcYPqWlmkPHusHua_XvX1h5y3XbA91f75WNtvn3P5tkdmY9JtoB3TP4RUQPXpucaEYNgEZ00HWl3B4JOXCQ2xbgYmUf8gcMYUzbAMtsgccgp2fa4fjjs9kAU15ZouBoaxEXGV72NS51nZ_GAAkCepg0rC6kYZaO_c9LVaXzrpqvJH-vtS_3Lfr_b59nvLQiyfjjUG9J9-_LXbkvWB_K065pDs2vJm-4O_mXkDcmbsSE8GsVpgb2Q1fZABo4htiYVjwv1Pg3VI9hghx0Se0JZ_Dxzmbn3u7zFmVJHfk6j_lfTdU37kTxsd-Ydkgxon-qu6bLe7YkYUMX1fwFmfahzeAiku_r5pW4fevU-QxBv3-bdiEzx_eBK9AqPs-z2865pc7_ltnvaYis5E8StxyH1J5UfRcH2Vj_NTnV_Gu67TzVBHPK6tgl62H152taHevvNetmcRG3o-KAfPtUP__ngR2IS-ph4BXkSiP6CippdTJc17KU3mMg9XBRwDwkB-_U4ZEv6-1Bb2TNMC1ONqanFOFn2bMCqDyYx8X7BxH9MTTwdMPEfU5NN8_hY752rPpRJbx_Mj0kZ3u3TKqy32_xh3dWdz3aNNcYpuVz5zY8wjLbrv-otZPxCrXCSGZhBFruXLfby_1vvZEK6_a4JkXra7Ibq7frpqW43-XANX4YzjLfszzY7Xb0JFbsjGXz13360w2f7Dgxwy2GfW5rLUNvV6EcIvdxOEfrS_xghlP8MRqSNVTLQ4wawqTvp8nnX1XucBLbTE9296fw9pqk7mJ1wVGIYf7Yw04emMOBxjlgvcGpnaOUGlnsYkUuHhVfp9dU_57_98htq8foqz55ncsqysRwzaIhscf99wo5z-J8eF-4tg4XlMpvQqcjGGQXr1XiZFfB5IjK4OY0RE4E0AukBUL4wGBZpdM_yhf_IPS6dKsSdcFlNecBWKTZHbG6xuaN2z-bGzylVpg0_Pq0CW3gWQ5YIWXpIdTZoCvvHGQNLj1INMCrMSTUWw9ydI1TCXh7nrhwA0QLZ0CLQcXCLQaiB4IkDzSoDy6WaMhEIT3Bh4IoaV3SQTHYMpWHGGQul4727qJrBXoRsyaoQSINSJpQUvJjSSpWBjF1P2AjDps-hgKDTaEOYIBEoiIR89gGiamJENpWAyEDTDhEpy5j7SZWZRCCpYaeHhUcJqjao2qNqAOOLe4Gpmp8iioKa7BeyANZRQczKAJqbFPA-BVwd_6bYtmWqKVUhDUnHAKo0qCEN8mJnm_h5FP0kDZ9PlUFTQ6nzM2AUSyRjrdOkQsJQEz01Wvi-uVx4amtEYTOTMXRCVJpkyqAnG7VASBGL3HQYP3r1W3mMYt07GnFvyFBhqrTWDCqAEtRFwUWpK6VLEDStQPqiJ9lbDogqkwQVkiCtoMIEjtmGaWnyQBXsp1EiVJoHZfSqer1ScX42AV3bsWDvwdIeVTgGAavqpVQejewTpNJInsVTt0y0rk3UOlJlkagynu4LLwvXxGfOD5wJVWGzUkpW8kJL6NUyTs7EWSRMlGHSK1qZmHoFXpAijiDTfbTAeRGqjI8TB9o4iCuwcCWmx2FkbHDy3ksn2cszC8KWWonSThnKldKFkJRpNi0Zl_EB4O0G5EpThzJtQXQOR6EeeHafe-LszOzHDq0kd13KaAmVYJWCUvjc9x3rzBI62tDpp6tJlivK-SpAiHjEl0bAfREw8CEwpQYZjphwHPjWZ9F3WhX82EfDOSYW6FNGI-5Ts5B64NYD9x54lDnm0G3BdYEPdekdMVppSxtgVIgDRjKuJI6YdcS8I1X5IXyvXX36GR9PThhAdnZWlQbNsiqen7g48FMZaVShReVQCPAmQ4QMHc_PjE2cuP6h6E8yr3MdNKSiopZUqEozJZUb90ILqUo4MrWuSoGfpWQKhptiFa-Ek5Kv_mB7EpQyQZluj8dIfz8AamUQeXTjKZM-wIQschcbQESZxlseFZyXkpY0ujXCwEzpaENnMJ1ZmH3n1S64_qHYK7xmAmoZH9JiwPvS_VSaU1tzuFopEc2O9OiGO0dhfMBdNXIiTy9ANu749wI8SrGoxYp7M5TAdpIZTpT5jnGdWQ0780xjsgJzzIowU0IrBlf4kYXGd9c2KqDb4kubu9Al-JXBr2Ly1WC6nz8guVCXT0iAxawAbsiKEK6HbKelqCB6YU9eIfAAsKnwzM1i4oFZ5qynHg9DuIByK3aqfF5K64AXcsriKVWm2OaXDqO9BEV6vA9_9sUX-eQCy6gZrIAVBqsKpTNshZS9LpT5_cNxU6QKVaWgJnYaxW71wCp_TzT_u74Rx5MSmkzQQhWaxrlwmbi-Mn_hCX8HMr-2R_8DKp3_QQ==&lang=sage&interacts=eJyLjgUAARUAuQ==}{Sage}.
\begin{lstlisting}[caption={Sage code}]
def SRem(x, y):
    r = x - y * floor(x / y)
    if r == x + y or (r != x and abs(r) <= abs(y) / 2):
        return r
    else:
        return r - y


def one_step(t, s, a, b):
    bs = b * s
    as_ = a * s

    u = SRem(t, bs)
    v = SRem(as_, u)

    A = matrix(QQ, [[1, 1], [0, 1]])
    B = matrix(QQ, [[1, 0], [QQ(b)/QQ(a), 1]])

    M = B^v * A^u

    X = a * u
    Y = b * v

    g = gcd(X, Y)
    if X < 0:
        g = -g

    new_pair = (X/g, Y/g)

    return M, new_pair, u, v


def run_until(t, s, a, b):
    B = matrix(QQ, [[1, 0], [QQ(b)/QQ(a), 1]])

    total_matrix = B
    pair = (t, s)

    print("Initial pair:", pair)
    print("Initial matrix B =")
    print(total_matrix)
    print()

    step = 0

    while pair != (1, 0):
        step += 1

        M, pair, u, v = one_step(pair[0], pair[1], a, b)

        total_matrix = M * total_matrix

        print(f"Step {step}:")
        print(f"u = {u}, v = {v}")
        print(f"Matrix = B^{v} A^{u}")
        print("New pair =", pair)
        print("Cumulative matrix =")
        print(total_matrix)
        print()

    return total_matrix
\end{lstlisting}
\begin{center}
   \textbf{ List of exceptional numbers and an element in $U_q\setminus H$}
\end{center}
\hrule

$s=5$: $(3;5)=(-2;5)$. $q=\tfrac{5}{2}$. \[B^{-2}A^{-1}BA^{-1}B=\begin{pmatrix}
    -\tfrac{1}{4}& \tfrac{1}{2}\\
    0 &-4
\end{pmatrix}.\]
\hrule

$s=7$: $(-2;7)=(5;7)$. $q=\tfrac{7}{2}$. 
\[
B^{-2}A(B^{-1}A^{-1})^2(BA^{-1})^3B=\begin{pmatrix}
 \tfrac{1}{64}& -\tfrac{359}{32}\\
 0& 64
\end{pmatrix}.
\]
$(-3;7)=(4;7)$, $q=\tfrac{7}{3}$.
\[
B^3A^{-1}B^2A^{-2}BA^{-1}B=\begin{pmatrix}
    -\tfrac{1}{27}& -\tfrac{43}{9}\\
    0 &-27
\end{pmatrix}.
\]

\hrule

$s=8$: $(-3;8)=(5;8)$, $q=\tfrac{8}{3}$.
\[
B^6A^{-1}BA^{-1}B=\begin{pmatrix}
    \tfrac{1}{9} &\tfrac{2}{3}\\
    0 &9
\end{pmatrix}.
\]
\hrule

$s=9$: $(-2;9)=(7;9)$ and $q=\tfrac{9}{2}>4$.\\
$(-4;9)=(5;9)$, $q=\tfrac{9}{4}$.
\[
B^{-2}A^{-2}BA^{-1}B=\begin{pmatrix}
    -\tfrac{1}{8}& \tfrac{3}{2}\\
    0 &-8
\end{pmatrix}.
\]

\hrule

$s=10$: $(-3;10)=(7;10)$. $q=\tfrac{10}{3}$.
\[
B^6AB^{-2}ABA^{-1}B^2A^{-1}BA^{-1}BA^{-1}B=\begin{pmatrix}
    -\tfrac{1}{729}&-\tfrac{8356}{243}\\
    0& -729
\end{pmatrix}.
\]
\hrule

$s=11$: $(-2;11)=(9;11)$. $q=\tfrac{11}{2}>4$.\\
$(-3;11)=(8;11)$. $q=\tfrac{11}{3}$.
\[
B^3A^{-2}(BA^{-1})^2B^2A^2B^{-1}A^{-3}BA^{-1}B^{-1}A^{-1}(BA^{-1})^4B=\begin{pmatrix}
    \tfrac{1}{59049}&\tfrac{112465430}{19683}\\
    0& 59049
\end{pmatrix}.
\]
$(-4;11)=(7;11)$, $q=\tfrac{11}{4}$.
\[B^{-4}AB^2A^{-1}BA^{-1}B=\begin{pmatrix}
    -\tfrac{1}{32}& \tfrac{25}{8}\\
    0& -32
\end{pmatrix}.\]
$(5;11)=(-6;11)$, $q=\tfrac{11}{6}=\tfrac{1}{2}\tfrac{11}{3}$.

\hrule

$s=12$: $(5;24)=(19;24)$, $q=\tfrac{12}{5}$.
\[B^{-1}A^5B^{-1}A^{-1}BA^{-1}B=\begin{pmatrix}
    \tfrac{1}{25}& -\tfrac{57}{5}\\
    0& 25
\end{pmatrix}.\]
$(7;24)=(17;24)$, $q=\tfrac{12}{7}$.
\[B^{-7}A^3B^{4}A^{2}BABA^{-1}B=\begin{pmatrix}
    \tfrac{1}{2401}& -\tfrac{70501}{343}\\
    0& 2401
\end{pmatrix}.\]

\hrule

$s=13$: $(-2;13)=(11;13)$. $q=\tfrac{13}{2}>4$.\\
$(-3;13)=(10;13)$. $q=\tfrac{13}{3}>4$.\\
$(-4;13)=(9;13)$. $q=\tfrac{13}{4}$.
\[B^{36}A^{-1}BA^{-1}BA^{-1}B=\begin{pmatrix}
    -\tfrac{1}{64}& -\tfrac{9}{16}\\
    0& -64
\end{pmatrix}.\]
$(-5;26)=(21;26)$. $q=\tfrac{13}{5}$.
\[B^{-15}A^{-1}BA^{-1}B=\begin{pmatrix}
    -\tfrac{1}{25}& \tfrac{3}{5}\\
    0& -25
\end{pmatrix}.\]
$(-6;13)=(7;13)$. $q=\tfrac{13}{6}$.
\[B^{-2}A^{-3}BA^{-1}B=\begin{pmatrix}
    -\tfrac{1}{12}& \tfrac{5}{2}\\
    0& -12
\end{pmatrix}.\]
$(-8;26)=(18;26)$. $q=\tfrac{13}{8}=\tfrac{1}{2}\tfrac{13}{4}$.

\hrule

$s=14$: $(-3;14)=(11;14)$. $q=\tfrac{14}{3}>4$.\\
$(-5;14)=(9;14)$. $q=\tfrac{14}{5}$.
\[B^{10}A^{-2}B^{2}A^{-1}BA^{-1}B=\begin{pmatrix}
    -\tfrac{1}{125}& -\tfrac{114}{25}\\
    0& -125
\end{pmatrix}.\]

\hrule

$s=15$: $(-2;15)=(13;15)$. $q=\tfrac{15}{2}>4$.\\
$(-4;15)=(11;15)$. $q=\tfrac{15}{4}$.
\[
\begin{aligned}[c]
B^{-5}A^{-4}BA^{4}(BA^{-1})^2BAB^{-3}A^{-2}(B^{-1}A)^2
B^{-1}A^{-1}BA^{-2}(BA^{-1})^5B \\
=
\begin{pmatrix}
    \tfrac{1}{16777216}& -\tfrac{3700348796781}{4194304}\\
    0& 16777216
\end{pmatrix}.
\end{aligned}
\]
$(-8;15)=(7;15)$. $q=\tfrac{15}{8}=\tfrac{1}{2}\tfrac{15}{4}$.

\hrule

$s=16$: $(-3; 16)=(13;16)$, $q=\tfrac{16}{3}>4$.\\
$(-5;16)=(11;16)$. $q=\tfrac{16}{5}$.
\[
B^{15}A^{3}B^{-2}A^{-1}(BA^{-1})^2B=\begin{pmatrix}
    \tfrac{1}{625}& \tfrac{1616}{125}\\
    0& 625
\end{pmatrix}.
\]
$(-7;32)=(25;32)$. $q=\tfrac{16}{7}$.
\[
B^{14}A^{-2}BA^{-1}B=\begin{pmatrix}
    \tfrac{1}{49}& \tfrac{11}{7}\\
    0& 49
\end{pmatrix}.
\]
$(-9;32)=(23;32)$. $q=\tfrac{16}{9}$.
\[
B^{-18}A^{2}BA^{-1}B=\begin{pmatrix}
    \tfrac{1}{81}& -\tfrac{23}{9}\\
    0& 81
\end{pmatrix}.
\]

\hrule

$s=17$: $(-2;17)=(15;17)$. $q=\tfrac{17}{2}>4$.\\
$(-3;17)=(14;17)$. $q=\tfrac{17}{3}>4$.\\
$(-4;17)=(13;17)$. $q=\tfrac{17}{4}>4$.\\
$(-5;51)=(46;51)$. $q=\tfrac{17}{5}$.
\[
B^{-10}A^{-1}BA^{-1}B^3A^{-1}B^2ABA^{-2}B^{-1}A^5(BA^{-1})^3B=\begin{pmatrix}
    -\tfrac{1}{390625}& \tfrac{852830752}{78125}\\
    0& -390625
\end{pmatrix}.
\]
$(-6;17)=(11;17)$. $q=\tfrac{17}{6}$.
\[
B^{-6}A^{2}B^2A^{-1}B^2A^{-1}BA^{-1}B=\begin{pmatrix}
    \tfrac{1}{324}& -\tfrac{1057}{54}\\
    0& 324
\end{pmatrix}.
\]
$(-7;51)=(44;51)$. $q=\tfrac{17}{7}$.
\[
B^{14}ABA^{3}B^{-1}A^{-2}B^{-1}A^2B^{-2}A^{-1}B^5A^{-4}B^{-1}A^{-1}BA^{-1}B=\begin{pmatrix}
    -\tfrac{1}{5764801 }& -\tfrac{136670451272}{823543}\\
    0& -5764801 
\end{pmatrix}.
\]
$(-10;51)=(41;51)$. $q=\tfrac{17}{10}=\tfrac{1}{2}\tfrac{17}{5}$.\\
$(-12;51)=(39;51)$. $q=\tfrac{17}{12}=\tfrac{1}{2}\tfrac{17}{6}$.\\
$(-22;51)=(29;51)$. $q=\tfrac{17}{22}=\tfrac{1}{2}\tfrac{17}{11}$.\\
$(-24;51)=(27;51)$. $q=\tfrac{17}{24}=\tfrac{1}{4}\tfrac{17}{6}$.

\hrule

$s=18$: $(-5;36)=(31;36)=(67;36)=(103;36)$ and $(5;36)=(41;36)=(77;36)$; $(-5;54)=(49;54)$ and $(5;54)=(59;54)$. $q=\tfrac{18}{5}$.
\[
B^{10}A^{-1}B^6A^{7}B^{-1}A^{-1}B^{-1}A^3B^{-1}A^{2}(BA^{-1})^4B=\begin{pmatrix}
    -\tfrac{1}{1953125}& -\tfrac{21828329652}{390625}\\
    0& -1953125
\end{pmatrix}.
\]
$(-7;36)=(29;36)=(65;36)=(101;36)$ and $(7;36)=(43;36)=(79;36)$; $(-7;54)=(47;54)$ and $(7;54)=(61;54)$. $q=\tfrac{18}{7}$.
\[
B^{-14}AB^{-6}A^{-1}BA^{-1}B=\begin{pmatrix}
    \tfrac{1}{343}& -\tfrac{481}{49}\\
    0& 343
\end{pmatrix}.
\]
$(-11;54)=(97;54)$. $q=\tfrac{18}{11}$.
\[
B^{-1}AB^{-3}A^2B^{-1}AB^{-1903}=\begin{pmatrix}
    -1331& \tfrac{52}{121}\\
    0& -\tfrac{1}{1331}
\end{pmatrix}.
\]
$(-13;108)=(95;108)$. $q=\tfrac{18}{13}$.
\[
B^{-3}A^{-1}B^{-2}AB^{-1}A^{-270}B^{78}=\begin{pmatrix}
2197&\tfrac{3438}{169}\\
0 &\tfrac{1}{2197}
\end{pmatrix}.
\]
$(-23;54)=(85;54)$. $q=\tfrac{18}{23}$.
\[
B^{69}A^{-3}B^{7}A^{3}B^{-5}A^{-1}B=\begin{pmatrix}
-\tfrac{1}{12167}&-\tfrac{119977}{529}\\
0 &-12167
\end{pmatrix}.
\]
$(-25;108)=(83;108)$. $q=\tfrac{18}{25}=\tfrac{1}{5}\tfrac{18}{5}$.\\

\hrule

$s=19$: $(-2;19)=(17;19)$. $q=\tfrac{19}{2}>4$.\\
$(-3;19)=(16;19)$. $q=\tfrac{19}{3}>4$.\\
$(-4;19)=(15;19)$. $q=\tfrac{19}{4}>4$.\\
$(-5;19)=(14;19)$, $q=\tfrac{19}{5}$.
\[
\begin{aligned}[c]
B^{-5}A^{-1}B^{-1}AB^{53}A^{-1}B^{2}A^{3}BA^2B^{-1}A^{-1}BA^{-4}BAB^{-1}A^4BA^{-1}BAB^{-2}A^7(BA^{-1})^6B \\
=
\begin{pmatrix}
    \tfrac{1}{3814697265625}& -\tfrac{147456840177984456855266}{762939453125}\\
    0& 3814697265625
\end{pmatrix}.
\end{aligned}
\]
$(-6;19)=(13;19)$. $q=\tfrac{19}{6}$.
\[ (BA^{-1})^3B^{-1}A^{5}BAB^{8}A^{3}
B^{-1}ABA^{-1}B^{-8}A^{-1}B^{-1}A^{-5}
B(AB^{-1})^3=\begin{pmatrix}
    -\tfrac{1}{18}& \tfrac{143385181}{3}\\
    0&-18
\end{pmatrix}.\]
$(-7;38)=(31;38)$. $q=\tfrac{19}{7}$.
\[
B^{14}A^2B^3A^{-1}BA^{-1}B=\begin{pmatrix}
    \tfrac{1}{343}&\tfrac{437}{49}\\
    0 & 343
\end{pmatrix}.
\]
$(-8;38)=(30;38)$. $q=\tfrac{19}{8}$.
\[
B^{-2}A^4B^{-1}A^{-5}BA^{-2}BA^{-1}B=\begin{pmatrix}
    -\tfrac{1}{512}&\tfrac{7325}{64}\\
    0&-512
\end{pmatrix}.
\]
$(9;19)=(-10;19)$. $q=\tfrac{19}{10}$.
\[
B^{-2}A^5BA^{-1}B=\begin{pmatrix}
    \tfrac{1}{20}& -\tfrac{11}{2}\\
    0 &20
\end{pmatrix}.
\]
$(-11;38)=(27;38)$. $q=\tfrac{19}{11}$.
\[
B^{-1}A^3B^{-1}AB^{-3}AB^{1012}=\begin{pmatrix}
    1331 &\tfrac{92}{121}\\
    0&\tfrac{1}{1331}
\end{pmatrix}.
\]

\hrule

$s=20$: $(-3;20)=(17;20)$. $q=\tfrac{20}{3}>4$.\\
$(-7;20)=(13;20)$. $q=\tfrac{20}{7}$.
\[
B^{-2}AB^{-1}AB^{-1}A^{3}BAB^{203}
=
\begin{pmatrix}
2401 & \tfrac{1418}{343}\\
0 & \tfrac{1}{2401}
\end{pmatrix}.
\]
$(-9;40)=(31;40)=(71;40)$ and $(9;40)=(49;40)$. $q=\tfrac{20}{9}$.
\[
B^{-1}A^9B^{-1}A^{-2}BA^{-1}B=\begin{pmatrix}
    \tfrac{1}{81}& -\tfrac{346}{9}\\
    0 &81
\end{pmatrix}.
\]
$(-11;80)=(69;80)$. $q=\tfrac{20}{11}$.
\[
B^{-44}A^4BA^3BA^2BA^{-1}B=\begin{pmatrix}
    \tfrac{1}{14641}& -\tfrac{244272}{1331}\\
    0& 14641
\end{pmatrix}.
\]
$(-29;80)=(51;80)$. $q=\tfrac{20}{29}$.
\[
B^{-3}A^{2}B^{-1}A^{33}B^{-116}
=
\begin{pmatrix}
841 & -\tfrac{305}{29}\\
0 & \tfrac{1}{841}
\end{pmatrix}.\]

\hrule

$s=21$: $(-2;21)=(19;21)$, $q=\tfrac{21}{2}>4$.\\
$(-4;21)=(17;21)$, $q=\tfrac{21}{4}>4$.\\
$(-5;21)=(16;21)$, $q=\tfrac{21}{5}>4$.\\
$(-8;42)=(34;42)=(76;42)=(118;42)$ and $(8;42)=(50;42)=(92;42)$; $(-8;63)=(55;63)$ and $(8;63)=(71;63)$. $q=\tfrac{21}{8}$.
\[
B^{40}A^{-1}BA^{-1}B=\begin{pmatrix}
    \tfrac{1}{64}&\tfrac{5}{8}\\
    0 &64
\end{pmatrix}.
\]
$(-13;126)=(113;126)$. $q=\tfrac{21}{13}$.
\[
B^{-6}A^{1}B^{-1}A^{2}B^{-455}
=
\begin{pmatrix}
169 & -\tfrac{3}{13}\\
0 & \tfrac{1}{169}
\end{pmatrix}.\]
$(-29;126)=(97;126)$. $q=\tfrac{21}{29}$.
\[
B^{-1}AB^{29}A^{-4}B^{-4}A^{4}B^{-4}A^{-1}B
=
\begin{pmatrix}
\tfrac{1}{841} & \tfrac{106071}{29}\\
0 & 841
\end{pmatrix}.
\]
\hrule
\printbibliography
\end{document}